\renewcommand{\cite}[1]{\citeauthor*{#1} [\citeyear{#1}]}% pour plusieurs auteurs
\definecolor{couleurCitations}{rgb}{0,0,0.85}
\definecolor{couleurRef}{rgb}{0.75,0,0}
\def\1{\bold 1}
\def\N{\mathbb{N}}
\def\R{\mathbb{R}}
\def\Z{\mathbb{Z}}
\def\S{\mathbb{S}}
\def\Ss{\mathcal{S}}
\def\D{\mathbb{D}}
\def\F{\mathcal{F}}
\def\G{\mathcal{G}}
\def\T{\mathcal{T}}
\def\L{\mathcal{L}}
\def\C{\mathcal{C}}
\def\dps{\displaystyle}
\def\1n{\hbox{$\{1,\dots,n\}$}}
\def\bsl{\backslash}
\def\t{\times}
\newtheorem{LM}{Lemma}[section]
\newtheorem{THM}[LM]{Theorem}
\newtheorem{Q}[LM]{Question}
\newtheorem{PR}[LM]{Proposition}
\newtheorem{COR}[LM]{Corollary}
\newtheorem{CL}[LM]{Claim}
\newtheorem{RK}[LM]{Remark}
\newtheorem{DEF}[LM]{Definition}
\def\restriction#1#2{\mathchoice
              {\setbox1\hbox{${\displaystyle #1}_{\scriptstyle #2}$}
              \restrictionaux{#1}{#2}}
              {\setbox1\hbox{${\textstyle #1}_{\scriptstyle #2}$}
              \restrictionaux{#1}{#2}}
              {\setbox1\hbox{${\scriptstyle #1}_{\scriptscriptstyle #2}$}
              \restrictionaux{#1}{#2}}
              {\setbox1\hbox{${\scriptscriptstyle #1}_{\scriptscriptstyle #2}$}
              \restrictionaux{#1}{#2}}}
\def\restrictionaux#1#2{{#1\,\smash{\vrule height .8\ht1 depth .85\dp1}}_{\,#2}}
\begin{document}

 \selectlanguage{english}
\title{Compact leaves in Reebless or taut foliations} 
\author[Shanti Caillat-Gibert]{Shanti Caillat-Gibert}
\email{shanti@cmi.univ-mrs.fr}
\address{CMI, Universit\'e de Marseille-Provence,
13453-Marseille cedex 13, France}
\date{\today}
\keywords{
Reeb component, Taut foliation, Reebless foliation, non-taut foliation, torus leaf, incompressible torus, turbulization, spiraling
}
\subjclass{Primary 57M25, 57M50, 57N10, 57M15}
\begin{abstract}
Torus leaves play a crucial role in the theory of foliations. For example non-taut foliations admit a torus leaf (see \cite{Go}).\\
In this paper, we study all the foliations near a torus leaf, and try to understand why sometimes it is taut, or non-taut (and Reebless). We focus on some crucial examples to understand non-taut and Reebless foliations; and foliations admitting a non-separating torus leaf. It relies on the study of turbulization and spiraling, with generalizations.

\end{abstract}

\maketitle

\tableofcontents

\section{Introduction}
In this paper, all the manifolds $M$ are $3$-dimensional, compact, connected and irreducible.\\
The foliations studied on $M$ are of codimension one (i.e the leaves are $2$-dimensional). Sometimes we will consider foliations on surfaces, when it will be specified.\\

Since the works of G. Reeb and S.P. Novikov, we know that all manifolds $M$ as above admit a codimension one foliation, (see \cite{Li}). The construction of this foliation gives rise to a \textit{Reeb component}. Foliations whitout Reeb component (or \textit{Reebless}) are more interesting, because they give deep information on the manifold $M$, for example $\pi_{1}(M)$ is infinite or $M\cong \mathbb{S}^2\times \mathbb{S}^1$ (see \cite{No}).\\
D. \cite{Ga}, improved the theory by introducing the notion of \textit{taut} foliation.\\
 It is well known that taut foliations are Reebless; here we generalize this fact in Proposition \ref{sep_torus} (a first version was already in \cite{B1}, or in [\citeauthor{Godb} lemma \textbf{3.8}]  for closed manifold, or manifolds with only one torus boundary component).\\

\begin{PR}\label{sep_torus}
Let $M$ be a $3$-manifold with a transversely orientable foliation $\mathcal{F}$.\\
If the boundary of $M$ is a union of leaves with the same transverse orientation or if  $\mathcal{F}$ contains a compact separating leaf, then $\mathcal{F}$ is not taut. 
\end{PR}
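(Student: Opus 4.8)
The plan is to argue by contradiction, exploiting the fact that a transverse orientation forces every crossing of a transversal with a fixed co-oriented leaf to count with the same sign. Recall that for a transversely orientable $\mathcal{F}$, tautness means that every leaf is met by a closed transversal, i.e. a loop everywhere transverse to $\mathcal{F}$; orient each such loop so that it always crosses the leaves in the positive transverse direction. The first, purely local, observation to record is that with this convention every intersection of a transversal with a given transversely oriented leaf $L$ contributes $+1$ to the algebraic intersection number; in particular, if a closed transversal $\gamma$ passes through $L$ then $[\gamma]\cdot[L]\ge 1>0$.

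The separating-leaf case is then immediate. Suppose $\mathcal{F}$ is taut and let $L$ be a compact separating leaf. Because $L$ separates $M$ it is null-homologous, so $[\gamma]\cdot[L]=0$ for every loop $\gamma$ (a cycle meets a separating hypersurface algebraically zero times). Tautness provides a closed transversal $\gamma$ through a point of $L$, and by the previous paragraph $[\gamma]\cdot[L]>0$, a contradiction. Hence $\mathcal{F}$ is not taut. (Equivalently, one may use the dual formulation below, pitting $\int_L\omega>0$ against $[L]=0$.)

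For the boundary case I would pass to the integral (Rummler--Sullivan) reformulation of tautness, which also sidesteps the fact that a boundary leaf cannot itself carry a closed transversal: a taut transversely oriented $\mathcal{F}$ carries a closed $2$-form $\omega$ restricting to a positive area form on every leaf, the leaves being oriented by the transverse orientation together with a fixed orientation of $M$ (see \cite{Godb}). Assume $\partial M=L_1\cup\cdots\cup L_k$ is a union of leaves all carrying the same transverse orientation relative to $M$, and suppose $\mathcal{F}$ taut. On one hand Stokes gives
\[
\int_{\partial M}\omega=\int_M d\omega=0 .
\]
On the other hand, writing $a_i=\int_{L_i}\omega>0$ for the integral taken with the leaf's own orientation, the outward-boundary orientation contributes $\varepsilon_i a_i$, where $\varepsilon_i=+1$ if the transverse orientation of $L_i$ points outward and $\varepsilon_i=-1$ if it points inward. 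The hypothesis that all the $L_i$ share the same transverse orientation says exactly that all the $\varepsilon_i$ are equal, whence $\int_{\partial M}\omega=\sum_i\varepsilon_i a_i\neq 0$, contradicting the displayed identity. Thus $\mathcal{F}$ is again not taut.

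The only genuinely delicate point, and the one I expect to be the main obstacle, is the orientation bookkeeping: one must fix an orientation of $M$ (passing to the orientation cover if $M$ is non-orientable), check that a transversely orientable $\mathcal{F}$ coherently orients its leaves, and verify that the sign $\varepsilon_i$ at each boundary leaf is governed precisely by the common transverse orientation --- this is exactly where the hypothesis is used, and where the conclusion would fail if the boundary leaves were co-oriented incoherently. If one prefers to avoid invoking the closed $2$-form, the boundary case can instead be reduced to the separating case by capping off or doubling $M$ along $\partial M$, the common transverse orientation being precisely what turns each $L_i$ into a coherently co-oriented separating leaf of the enlarged closed manifold; the intersection-number argument of the second paragraph then applies verbatim.
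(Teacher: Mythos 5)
Your argument is essentially correct, but it takes a genuinely different route from the paper. The paper proves the statement in a completely elementary way (via Proposition \ref{sep_torus-arc}): along any transverse loop or properly embedded transverse arc $\gamma$ whose endpoints lie on the separating leaf (respectively on boundary leaves with the same co-orientation), it considers the continuous angle $h(x)=(T\gamma_x,N_x)$ between the tangent of $\gamma$ and the positive normal field, observes that $h>\pi/2$ where $\gamma$ enters and $h<\pi/2$ where it exits --- which is exactly where the common transverse orientation is used --- and concludes by the Intermediate Value Theorem that $\gamma$ has a tangency, so it was never transverse. That argument needs nothing beyond continuity of the normal field, so it applies to $C^{0}$ foliations and treats properly embedded transverse arcs on the same footing as loops. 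Your separating-leaf case (all crossings of a closed transversal with a co-oriented leaf count $+1$, while $[L]=0$) is clean and, if anything, slicker than the paper's reduction of a closed transversal to arcs lying on one side of $L$; note only that, like the paper, you are implicitly restricting that clause to closed transversals, i.e.\ essentially to $\partial M=\emptyset$. Your boundary case, however, buys its brevity by importing the Rummler--Sullivan/Goodman equivalence between tautness and the existence of a closed $2$-form positive on the leaves for manifolds with tangent boundary: that is a substantial theorem the paper does not prove, it requires more regularity than Proposition \ref{sep_torus} assumes (which carries no smoothness hypothesis), and it is precisely the machinery behind the earlier versions of this statement that the paper replaces by a self-contained argument. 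Granting that theorem, your Stokes computation and the sign bookkeeping for the $\varepsilon_i$ are correct.

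One concrete slip in your fallback reduction: doubling $M$ along $\partial M$ does \emph{not} turn each boundary leaf $L_i$ into a separating leaf of $D(M)$ when $\partial M$ has at least two components, since $D(M)\setminus L_1$ remains connected through $L_2$. What is true, and what rescues the reduction, is that the whole union $\partial M=\bigcup_i L_i$ bounds a copy of $M$ in $D(M)$ and is therefore null-homologous, and that the doubled foliation is coherently co-oriented precisely because all the $L_i$ carried the same transverse orientation; a closed transversal through any $L_i$ would then have positive algebraic intersection with the class $[\partial M]=0$, the desired contradiction.
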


D. Gabai showed that non-trivial homology is a sufficient condition to admit a taut foliation. The general problem of existence of a taut foliation in homology spheres is still open, even if many works partially answer the question (see for example \cite{BNR} for graph manifolds, \cite{GM} which is a complete classification for Seifert fibered manifolds and \cite{RSS} for hyperbolic manifolds). \\
 
Proposition \ref{sep_torus} has the following corollary~:
\begin{COR}
Any taut, transversely oriented foliation in a rational homo-\\logy sphere, has no compact leaf.
\end{COR}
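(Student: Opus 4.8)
The plan is to argue by contradiction and to invoke the \emph{separating leaf} clause of Proposition~\ref{sep_torus}. Let $M$ be a rational homology sphere and $\mathcal{F}$ a taut, transversely oriented foliation of $M$, and suppose for contradiction that $\mathcal{F}$ has a compact leaf $S$. Since a rational homology sphere is a closed orientable $3$-manifold, we have $\partial M=\varnothing$, so the boundary clause of Proposition~\ref{sep_torus} is vacuous and only the clause about a compact \emph{separating} leaf can apply. Thus the whole argument reduces to one topological assertion: \emph{every compact leaf of $\mathcal{F}$ separates $M$}. Once this is established, Proposition~\ref{sep_torus} forces $\mathcal{F}$ to be non-taut, contradicting the hypothesis, so no compact leaf can exist.

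First I would record the structure of $S$. A leaf of a foliation is connected by definition, so $S$ is a connected closed surface; and since $\mathcal{F}$ is transversely orientable, $S$ is two-sided and admits a product neighbourhood $N\cong S\times[-1,1]$ with $S=S\times\{0\}$ and boundary copies $S_{-}=S\times\{-1\}$, $S_{+}=S\times\{+1\}$. In particular $S$ is a closed orientable surface carrying a fundamental class $[S]\in H_2(M;\mathbb{Z})$.

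Next I would compute $H_2(M;\mathbb{Z})$. Because $M$ is a rational homology sphere, $H_1(M;\mathbb{Z})$ is finite, hence $\mathrm{Hom}(H_1(M;\mathbb{Z}),\mathbb{Z})=0$, and the universal coefficient theorem gives $H^1(M;\mathbb{Z})\cong\mathrm{Hom}(H_1(M;\mathbb{Z}),\mathbb{Z})=0$. Poincaré duality for the closed orientable $3$-manifold $M$ then yields $H_2(M;\mathbb{Z})\cong H^1(M;\mathbb{Z})=0$. Consequently $[S]=0$, and therefore the algebraic intersection number $\gamma\cdot S$ vanishes for every loop $\gamma$ in $M$, since it depends only on the classes $[\gamma]\in H_1(M;\mathbb{Z})$ and $[S]\in H_2(M;\mathbb{Z})$.

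Finally I would deduce that $S$ separates. Suppose not: then $M'=M\setminus\mathrm{int}(N)$ is connected, so there is a path in $M'$ joining a point of $S_{+}$ to a point of $S_{-}$; closing it up by an arc through $N$ crossing $S=S\times\{0\}$ exactly once transversally produces a loop $\gamma$ with geometric, hence algebraic, intersection $\gamma\cdot S=\pm1\neq0$, contradicting the vanishing of all intersection numbers with $S$. Hence $S$ separates $M$, and Proposition~\ref{sep_torus} completes the proof. The only genuinely delicate point is this last implication, translating the homological triviality $[S]=0$ into the geometric fact that $S$ separates; for a \emph{two-sided connected} surface the crossing-loop argument makes it routine, and the remainder is just Poincaré duality together with the quoted Proposition.
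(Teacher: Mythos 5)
Your proposal is correct and follows exactly the paper's argument: a rational homology sphere has $H_2(M;\mathbb{Z})=0$, so every compact (two-sided) leaf must separate, and Proposition~\ref{sep_torus} then contradicts tautness. You have merely expanded the paper's one-line homological remark into the explicit Poincar\'e duality and intersection-number computation.
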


\begin{proof}
Indeed, suppose it admits one compact leaf.  A  rational homology sphere cannot admit any non-separating surface (this induces non-trivial homology). Hence this compact leaf is separating in a closed manifold. By Proposition \ref{sep_torus}, the foliation cannot be taut; a contradiction.\\
\end{proof}

This fact is crucial for showing the main result of \cite{GM}.\\

S. \cite{Go} showed that a non-taut foliation always admits a torus leaf (see Section \ref{prop-non-taut}).  We will see examples of non-taut foliations admitting a separating torus leaf, and non-taut foliations admitting a non-separating torus leaf.\\

One goal of this paper is to understand better non-taut and Reebless foliations. Note that together with \citeauthor{No}'s theorem, if a foliation of a closed $3$-manifold (or with boundary leaves) is non-taut and Reebless, then it admits an incompressible torus leaf. Hence a great part of this paper studies foliations near incompressible torus leaves. Note that those non-taut and Reebless foliations never arise in hyperbolic closed manifolds (since they cannot contain incompressible torus). \\ 
In this context we study two geometric processes~: \textit{turbulization} ($\T_{*}$ component) and \textit{spiraling} ($\Ss_{*}$ component) which occur near a torus leaf (or more generally near a closed compact surface).\\
We will see that turbulization and spiraling can give rise to non-taut Reebless foliations. Spiraling was first introduced by \cite{Ga}, and here we give more details to define it; and then we link it to turbulization (which was first defined by G. Reeb).\\
Conversely if a foliation admits a torus leaf then roughly speaking, in a regular neighborhood of this torus there is turbulization or spiraling, which is the aim of next proposition (for precise definitions see Section \ref{prel}).\\

\begin{PR} \label{torus-leaf}
Let $M$ be a manifold admitting a transversely oriented $\mathcal{C}^{2}$-foliation $\mathcal{F}$.\\ 
Assume that  if $M$ is either $T^{2}\t \S ^{1}$ or $ T^{2}\t I$ then $M$ is not only foliated by torus leaves.\\
Then $\mathcal{F}$ contains either a $\mathcal{S}_{*}$, or a $\mathcal{T}_{*}$ component, if and only if $\mathcal{F}$ admits a torus leaf.
\end{PR}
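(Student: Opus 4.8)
The plan is to establish the two implications separately; the forward one is essentially definitional, while the converse carries the real content. For the \emph{only if} direction I would simply read off the conclusion from the definitions of the $\T_{*}$ and $\Ss_{*}$ components given in Section \ref{prel}. Each such component is, by construction, a foliated piece built along a compact torus to which the neighbouring leaves accumulate, and that torus is a leaf of $\F$. Hence the mere presence of a $\T_{*}$ or $\Ss_{*}$ component already exhibits a torus leaf, with nothing further to prove.

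For the converse I would begin with a torus leaf $T$. Since $\F$ is transversely oriented, $T$ is two-sided, so I fix a foliated regular neighbourhood $T\times[-1,1]$ with $T=T\times\{0\}$ and analyse the germ of $\F$ on each side through the holonomy homomorphism $h:\pi_{1}(T)=\Z^{2}\to G^{+}$, where $G^{+}$ is the group of germs at $0$ of orientation-preserving diffeomorphisms of a one-sided transversal $[0,\varepsilon)$. Its image is abelian, and this is exactly where the $\mathcal{C}^{2}$ hypothesis is decisive: by Kopell's lemma together with Sacksteder's theorem, two commuting germs of $\mathcal{C}^{2}$ contractions cannot have fixed points accumulating at $0$ without being trivial, which rules out Denjoy-type or otherwise pathological holonomy. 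This forces, on each side, a clean trichotomy: either the holonomy is trivial; or its nontrivial part is a contraction in a single direction transverse to a curve $\gamma\subset T$ along which holonomy is trivial; or it contracts along two independent directions.

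I would then match these cases to the two components. When the holonomy on the relevant side is nontrivial with a rank-one trivial direction $\gamma$, the nearby leaves close up along $\gamma$ and spiral into $T$ transversally to it, which is precisely a turbulization, i.e. a $\T_{*}$ component; when two independent directions contract, the nearby leaves spiral into $T$ in both directions, giving a $\Ss_{*}$ component. It remains to dispose of the trivial-holonomy case, where Reeb's local stability theorem makes the neighbourhood a product of $T$ by an interval, foliated by parallel tori. Here I pass to the maximal open set $U$ swept out from $T$ by such product pieces through leaves parallel to $T$. If $U\neq M$, its frontier contains a parallel torus leaf $T'$ whose holonomy on the side facing $M\setminus U$ must be nontrivial (otherwise the product region would extend), and the previous step produces a $\T_{*}$ or a $\Ss_{*}$ component along $T'$. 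If instead $U=M$, then $\F$ is the foliation of $M$ by tori parallel to $T$, so $M\cong T^{2}\times\S^{1}$ or $T^{2}\times I$ foliated solely by torus leaves, which is precisely the situation removed by the standing hypothesis.

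The step I expect to be the main obstacle is the holonomy classification: making the trichotomy rigorous and, above all, verifying that in each nontrivial case the local picture genuinely coincides with the definition of a $\T_{*}$ or a $\Ss_{*}$ component, rather than some intermediate configuration. This is exactly where the $\mathcal{C}^{2}$ regularity does the heavy lifting, through Kopell's and Sacksteder's results, to exclude exotic accumulation of holonomy fixed points and to guarantee that the spiralling is of the standard form.
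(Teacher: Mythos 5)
Your overall strategy (forward direction definitional; for the converse, analyse a collar of the torus leaf and dispose of the all-product case by a maximal product region, which is excluded by the hypothesis on $T^{2}\t\S^{1}$ and $T^{2}\t I$) is parallel to the paper's, and your treatment of the trivial-holonomy case via Reeb stability is an acceptable substitute for the paper's collapsing argument and Claim \ref{dense}. But the core of your converse --- the holonomy trichotomy and its dictionary with the $\T_{*}$ and $\Ss_{*}$ components --- is both a different route from the paper's and the place where the argument has a genuine gap. The paper never classifies germs of holonomy: it treats a compressible $T$ via \cite{No} (Reeb component, hence a $\T$ component), and for $T$ incompressible it isotopes a nearby torus to be everywhere transverse using \cite{Ro} and \cite{Th}, then applies the Denjoy--Siegel dichotomy to the induced one-dimensional $\C^{2}$ foliation on that transverse torus: dense gives a $\T_{*}$ component, a circle leaf gives an $\Ss_{1}$ or $\Ss_{*}$ component. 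That step manufactures the transverse boundary torus that the very definitions of $\T_{*}$ and $\Ss_{*}$ require; your germ-level analysis never produces it, and the germ at the leaf does not by itself determine the foliation on a collar of definite size.

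More concretely, your trichotomy does not match the components as the paper defines them. In $\Ss_{g}(f,h)$ with $f\neq \mathrm{Id}$, and in $\Ss_{*}(\G,h)$, the transverse annulus carries infinitely many circle leaves $\delta\t\{0\}\t\{i_{n}\}$ accumulating at the boundary torus leaf (Remark \ref{infini}); in holonomy terms one generator of $\Z^{2}$ acts as a contraction while the commuting generator has interior fixed points accumulating at the leaf. This is exactly the configuration Kopell's lemma excludes for $\C^{2}$ germs, so the tools you invoke do not yield your case of ``two independent contractions $\Rightarrow \Ss_{*}$''; they rather collapse that case, and they cannot distinguish $\T$ from the irrational $\T_{*}(f)$ either, since both have holonomy ``contraction $\times$ trivial''. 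The step you yourself flag as the main obstacle --- verifying that each holonomy class is realized on an embedded $T^{2}\t I$ by a foliation isotopic to one of the two models --- is precisely the missing content, and as stated your classification would fail to recognize the spiraling components at all. Repairing it essentially forces you back to the paper's ingredients: a sub-collar with no interior torus leaf, a transverse torus from Roussarie--Thurston, and Denjoy--Siegel applied to the induced foliation on it.
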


If all the boundary components of $M$ (with a transversely oriented foliation $\F$) are torus leaves; we say that $\F$ has a \textit{bad orientation} if the transverse orientation on all the torus boundary leaves is the same (all point inward or all point outward); otherwise we say that it is a \textit{good orientation} (at least two torus leaves have opposite orientation, one inward and the other outward).\\
Then, we will see that the tautness of the foliation is deeply linked to the good or bad transverse orientation as suggests the following theorem.
\begin{THM}\label{main}
Let $M$ be a manifold with a transversely oriented $\C^{1}$-foliation~$\F$.\\
Assume that the boundary of $M$ is a union of torus leaves.\\
Assume also that $\F$ does not admit neither interior torus leaf, nor embedded annulus whose induced foliation by $\F$ is a Reeb annulus.\\
 Then, $\F$ is taut if and only if $\F$ has a good orientation.
\end{THM}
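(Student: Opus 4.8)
The plan is to prove the two implications separately, deriving one direction directly from Proposition~\ref{sep_torus} and establishing the other by ruling out dead-end regions.

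\emph{Taut $\Rightarrow$ good orientation.} I would argue by contraposition. By definition, a bad orientation means that all torus boundary leaves carry the same transverse orientation; thus $\partial M$ is a union of leaves with the same transverse orientation, which is exactly the first hypothesis of Proposition~\ref{sep_torus}. Hence $\F$ is not taut, and contraposing gives that a taut foliation must have a good orientation. Note that this direction uses neither of the two standing hypotheses on interior torus leaves and Reeb annuli.

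\emph{Good orientation $\Rightarrow$ taut.} I would proceed by contradiction, assuming $\F$ is not taut, and seek to contradict a hypothesis. The first step is to convert ``not taut'' into a concrete obstruction: I would invoke the standard dead-end characterization of non-tautness (complementary to Proposition~\ref{sep_torus}), namely that a non-taut foliation possesses a \emph{dead-end component} $N$, a saturated codimension-$0$ region all of whose frontier leaves carry coherent transverse orientation (all pointing into $N$, or all out) and which no transversal can exit. The second step is to observe that $\F$ is Reebless: any Reeb component contains an embedded annulus --- for instance a diameter annulus $(\text{diameter of }D^{2})\t S^{1}$ of the model solid torus --- on which the induced foliation spirals onto both boundary circles, i.e. is a Reeb annulus, and this is excluded by hypothesis. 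Reeblessness is what then lets me pin down the frontier of $N$.

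The decisive step, and the main obstacle, is to show that a dead-end component $N$ cannot coexist with the good orientation. Since $\F$ is Reebless and non-taut and $\partial M$ consists of leaves, the structural fact recalled in the Introduction (Goodman's theorem together with Novikov's theorem) applies and forces the compact leaves bounding $N$ to be tori. If any such frontier torus lies in the interior of $M$, it is an interior torus leaf, contradicting the first hypothesis. Otherwise every frontier leaf of $N$ lies in $\partial M$, so the topological frontier of $N$ is contained in $\partial M$; since $M$ is connected, this forces $N=M$. But then all torus boundary leaves of $M$ are frontier leaves of $N$ with coherent transverse orientation, i.e. all point the same way, which is precisely a bad orientation and contradicts our assumption. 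Either way we reach a contradiction, so $\F$ is taut. The delicate points on which the whole argument rests are two, and both should be treated with care: first, justifying the dead-end characterization and the ``frontier leaves are tori'' statement with only $\C^{1}$ regularity, rather than the $\C^{2}$ used in Proposition~\ref{torus-leaf}; and second, verifying that every Reeb component (and, more generally, any collar of a boundary torus that traps transversals) genuinely contains an embedded annulus whose induced foliation is a Reeb annulus, so that the second hypothesis really does deliver Reeblessness and thereby excludes all interior traps.
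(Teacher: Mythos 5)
Your first direction (taut $\Rightarrow$ good orientation) is exactly the paper's argument via Proposition~\ref{sep_torus}. For the converse you take a genuinely different route. The paper truncates a collar of each boundary torus along a transverse torus (Roussarie--Thurston), applies Corollary~\ref{gen-goodman} to see that the truncated foliation has no torus leaves and hence is taut, produces a properly embedded transverse arc joining the two transverse tori (Claim~\ref{arc}), and then uses the coherence of orientations forced by the orientability of $M$ (Claim~\ref{possible}) together with the collar classification of Proposition~\ref{torus-leaf} to extend that arc through the $\Ss_{*}$ or $\T_{*}$ collars; the case of more than two boundary components is reduced to Theorem~\ref{bad-good-2-comp} by pairing oppositely oriented tori. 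Your dead-end-component argument bypasses all of this, and, if completed, would even avoid the $\C^{2}$ regularity that the paper implicitly imports through Proposition~\ref{torus-leaf}.

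However, as written there is a genuine gap at the step ``the compact leaves bounding $N$ are tori'', and it sits precisely where you have not yet used the good orientation. Corollary~\ref{gen-goodman}(1) makes a leaf a torus only if it meets neither a transverse loop \emph{nor a properly embedded transverse arc}. The dead-end property you invoke does rule out closed transversals through an interior frontier leaf $F$ of $N$ (a loop crossing $F$ positively enters $N$, can never recross the coherently oriented frontier positively, and cannot return to $F$ from the $N$-side), but it does not by itself rule out a properly embedded transverse arc through $F$: such an arc enters $N$, is trapped there, and could simply terminate on a boundary torus of $M$ contained in $N$ whose transverse orientation points out of $M$. To exclude this you must show that $N$ contains no outward-oriented boundary torus. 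This does hold if $N$ is built concretely as the closure of the set of points reachable from the non-taut leaf $L$ (itself a boundary torus, say inward-oriented) by positively oriented transversals: any outward-oriented boundary torus in that closure would yield a positive transverse arc from $L$ terminating on $\partial M$, i.e.\ a properly embedded transverse arc through $L$, contradicting the choice of $L$. With that supplement your final dichotomy is correct: an interior frontier leaf is then a torus, contradicting the hypothesis, while if the frontier lies in $\partial M$ then connectedness forces $N=M$, every boundary torus is inward-oriented, and the orientation is bad. So the strategy is sound, but the existence and trapping properties of the dead-end component in the bounded, arc-based setting are exactly the content that must be proved rather than cited, and your writeup leaves them open.
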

%
%Moreover, spiraling helps us to understand how a non-separating torus can be a leaf of a taut foliation via the construction of D.Gabai.\\
%
%Finally, that leads us to point out the difference between turbulization and spiraling (when there is at least two boundary torus leaves), where in the former case there is a choice to do, whereas in the latter case we cannot make any choice.\\

\medskip
\noindent
 {\sc Schedule of the paper.}\\
We organize the paper as follows.\\

In Section \ref{prel}, we recall basic definitions and notations.

Section \ref{Sec-Reeb-turb} introduces the well-known Reeb's component, and the geometric process of turbulization by two different interesting ways.

In Section \ref{spir} we define the geometric process of spiraling and generalize it under condition. 

In  Section \ref{torus}, we first give the equivalence of those two geometric processes in a particular case. For this, we describe $\mathcal{C}^{2}$-foliations near a torus leaf and prove Proposition \ref{torus-leaf}.

Section \ref{prop-non-taut} proves Proposition \ref{sep_torus} saying that separating torus leaves or boundary leaves with the same transverse orientation are contained in a non-taut foliation.\\
Furthermore, we explain why each hypothesis is necessary for this Proposition. For this, we consider the Waldhausen manifold and give an example of taut foliation with a single boundary leaf, non-transversely orientable with non-compact leaves on $Q$.\\
Then we prove Corollary \ref{gen-goodman} of Theorem \ref{tore_Go} which states that a non-taut foliation of a closed $3$-manifold (or manifold with boundary leaves) always contains a torus leaf (\cite{Go}). \\
The following of the paper focuses on the two cases: separating torus leaves (Section \ref{non-taut-sec}) and non-separating ones (Section \ref{non-sep-torus}).

Indeed, Section \ref{non-taut-sec} provides a collection of different non-taut foliations as follows.\\
There are the ones with Reeb components, that we cannot remove (example on $\S^{3}$);
and the ones with Reeb components that we can remove, or non-taut and Reebless foliations; (example on $T¬^{3}$).

The aim of Section \ref{non-sep-torus} is to understand why a foliation with a non-separating torus can be either taut or non-taut. We start with key examples, and we generalize by proving Theorem \ref{main} saying that if a foliation of a manifold with torus boundary leaves does not contain embedded Reeb annuli, then it is taut if and only if it has a good orientation (at least two boundary components whose transverse orientation is opposite).\\

\medskip
\noindent
{\sc Acknowledgement.}\\

Proposition \ref{non-taut} is due to an interesting discussion with Andr\'as Juh\'asz, and the author wants to thank him.

\medskip
\noindent
{\sc Perspectives.}\\

We will see in section \ref{non-taut-sec} some examples of non-taut and Reebless foliations, on different manifolds. One open question is the following~:
\begin{Q}
What are the manifolds admitting a non-taut and Reebless transversely oriented foliation, but not admitting a taut foliation?
\end{Q}
 Note that the first examples of such manifolds were found by \cite{BNR} and they are graph manifolds, (we have seen that this question is trivial for closed hyperbolic manifolds).\\
  Note also that the examples given here of non-taut Reebless foliations, concern manifolds admitting (another) taut foliation.\\
 \cite{GM} have given an infinity of examples of Seifert manifolds not admitting a taut foliation, hence we should ask the following :
 \begin{Q}
Is this family admitting a non-taut and Reebless foliation?
\end{Q}
Note also that nothing is known about the existence of taut foliations or of non-taut and Reebless foliation among non-hyperbolic manifolds admitting an hyperbolic submanifold.\\

\section{Preliminaries}\label{prel}

From now on, $M$ will be a compact connected irreducible $3$-manifold, possibly with boundary, and $\mathcal{F}$ will be a codimension one foliation on $M$ considered up to isotopy when it is not indicated.\\
Furthermore we will let $I=[0,1]$, and denote $\mathring{X}$ the interior of $X$, and $\overline{X}$ the closure of $X$, when it makes sense, and let $T^2\cong \mathbb{S}^1\times\mathbb{S}^1$.\\
For all the following the circle $\S^{1}$ is parametrized by $\{e^{i\theta}, \theta\in ]-\pi,\pi]\}$, but for more simplicity we will consider it as $\{\theta\in ]-\pi,\pi]\}$.

\medskip\noindent
{\bf Separating surfaces and non-separating surfaces.}
A properly embedded surface $F$ in a $3$-manifold $M$ is  said to be a
\textbf{\textit{separating surface}} if $M\bsl F$ is not connected;
otherwise, $F$ is said to be a \textbf{\textit{non-separating surface}} in $M$.
If $F$ is a separating surface, we call \textbf{\textit{sides of $F$}} the connected components of $M\bsl F$.

A $3$-manifold is said to be \textbf{\textit{reducible}} if $M$ contains an \textbf{\textit{essential}} $2$-sphere,
i.e. a $2$-sphere which does not bound any $3$-ball in $M$.
Then, either $M$ is homeomorphic to $\mathbb{S}^1\times\mathbb{S}^2$, or $M$ is a non-trivial connected sum.
If $M$ is not a reducible $3$-manifold, we say that $M$ is an \textbf{\textit{irreducible}} $3$-manifold.

\medskip\noindent
{\bf Incompressible torus.}
An embedded torus $T$ in $M$ is said to be \textbf{\textit{incompressible}} if the induced map $\pi_{1}(T) \rightarrow \pi_{1}(M)$ is injective, otherwise we say that $T$ is compressible.\\
Note that in an irreducible manifold, a compressible torus is always separating, while an incompressible torus can be separating or non-separating.

\medskip\noindent
{\bf Transverse orientation.}
Let $M$ be a compact connected $3$-manifold possibly with boundary.\\
Let $\mathcal{F}$ be a codimension one foliation on $M$.\\
A foliation $\mathcal{F}$ of $M$ is \textbf{\textit{transversely orientable}}, if $M$ admits a non-zero continuous vector field, transverse (i.e non-tangent) to all the leaves.\\
If we fix such a non-zero continuous vector field, then $\mathcal{F}$ is said to be \textbf{\textit{transversely oriented}}.

\medskip\noindent
{\bf Reeb annulus.}

First, we define a foliation of $\mathbb{R}^2$.
Let $f~:\mathbb{R}^2\rightarrow\mathbb{R}$

  \begin{center}
 $f~:(x,z)\mapsto(x^2-1)\times\exp(z)$
\end{center}

$f$ is a  submersion, so it defines a foliation $\mathcal{F}$, axially symmetric about the $z$-axis, where~:

\begin{itemize}
\item $f^{-1}(\lbrace 0 \rbrace)$ is a union of two vertical leaf $\{x= 1\}$ and  $\{x= -1\}$.\\

\item $f^{-1}(\lbrace c^2 \rbrace)$ are leaves satifying the equation\\
 $z=2\log(c)-\log(x^2-1)$, for $|x|>1$. \\
When $z\rightarrow+\infty$, $x^2 \rightarrow 1$ so the leaves tend toward the vertical leaves. \\
When $z\rightarrow-\infty$, $x^2 \rightarrow +\infty$.\\
The general shape is $-\log$.\\

\item $f^{-1}(\lbrace -c^2 \rbrace)$ are parabolic leaves satifying the equation $z=2\log(c)-\log(1-x^2)$, for $|x|<1$. They meet the $z$-axis for $z=2\log (c)$. \\
 When $z\rightarrow+\infty$, $x^2\rightarrow 1$, so the leaves tend toward the vertical leaves. \\
\end{itemize}

%\begin{figure}[ht!]
%\begin{center}
%\input{feuilletageR2.pdf_t}
%\caption{Foliation $\mathcal{F}$ of $\mathbb{R}^2$}\label{fig~:2}
%\end{center}
%\end{figure}

\begin{figure}[htb!]
\includegraphics[width=9cm]{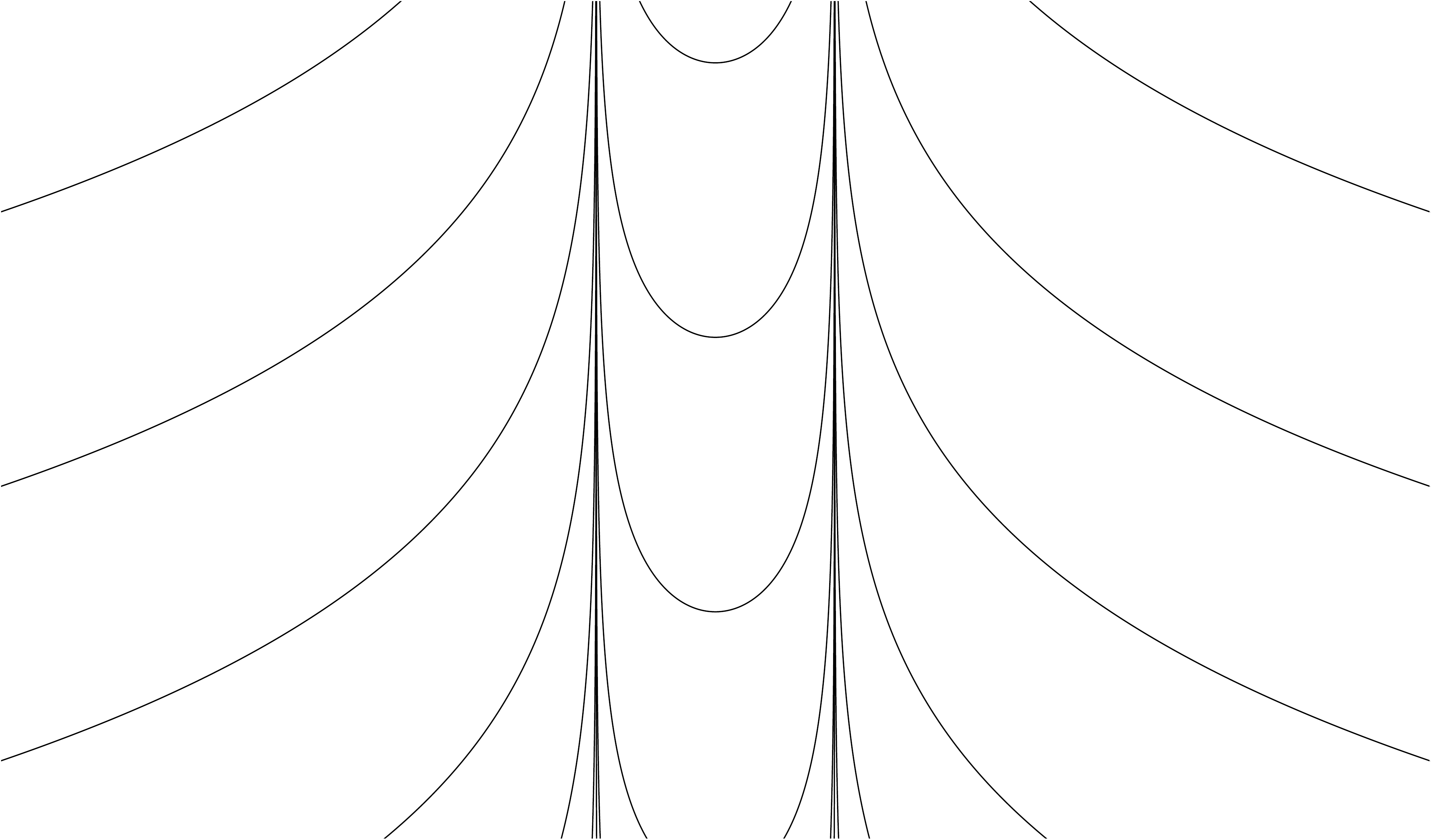}
\caption{ Foliation $\mathcal{F}$ of $\mathbb{R}^2$}\label{R2}
\end{figure}

$\mathcal{F}$ is invariant under integral translations along the $z$-axis; then it induces foliations on an annulus as follows.\\

Consider the restricted foliation of $\mathcal{F}$ on the set $R=\{(x,z)\in  \mathbb{R}^2, -1\leq x\leq 1\}$. The annulus $R/\!\raisebox{-.65ex}{\ensuremath{\sim}}$ where $(x,z)\sim (x,z+k), k\in \Z$ admits an induced foliation by $\F$, and is called \textbf{\textit{Reeb annulus}} as illustrated in Figure \ref{Reeb-annulus}.\\

\begin{figure}[htb!]
\includegraphics[width=8cm]{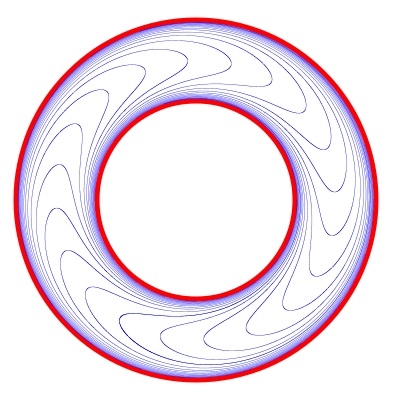}
\caption{ Reeb annulus (from Wikipedia)}\label{Reeb-annulus}
\end{figure}

\medskip\noindent
{\bf Direction of rotation of a spiral foliation.}
Let $X=\{(x,\theta), x\in  I, \theta\in ]-\pi,\pi]\}\cong I\t \S^{1}$ be an annulus foliated with two circle boundary leaf and spiral leaves in the interior of $X$ (see Remark \ref{Reeb-turb} for a definition of this foliation) we call this foliation a \textbf{\textit{spiral foliation}}. Keeping fixed the two boundary components, there is two non-isotopic such foliations drawn in Figure \ref{dir}.

\begin{DEF}
Consider a foliation of $X$ with spiral foliation. Choose any spiral leaf and orient it so that $x$ grows in $I$ (i.e fix the direction of rotation so that $x$ grows). \\
That induces an orientation by continuity on all the leaves of this foliation hence on the circle leaves (which is the same). If this orientation is a clockwise direction of rotation we call that foliation a \textbf{\emph{clockwise spiral foliation}} and the spiral leaves are called \textbf{\emph{clockwise spiral}}, otherwise we call it \textbf{\emph{anti-clockwise spiral foliation}}, and the spiral leaves are called \textbf{\emph{anti-clockwise spiral}}.
\end{DEF}

\begin{figure*}[htb!]
\begin{minipage}[b]{0.48\linewidth}
\centering
\centerline{\includegraphics[width=6cm]{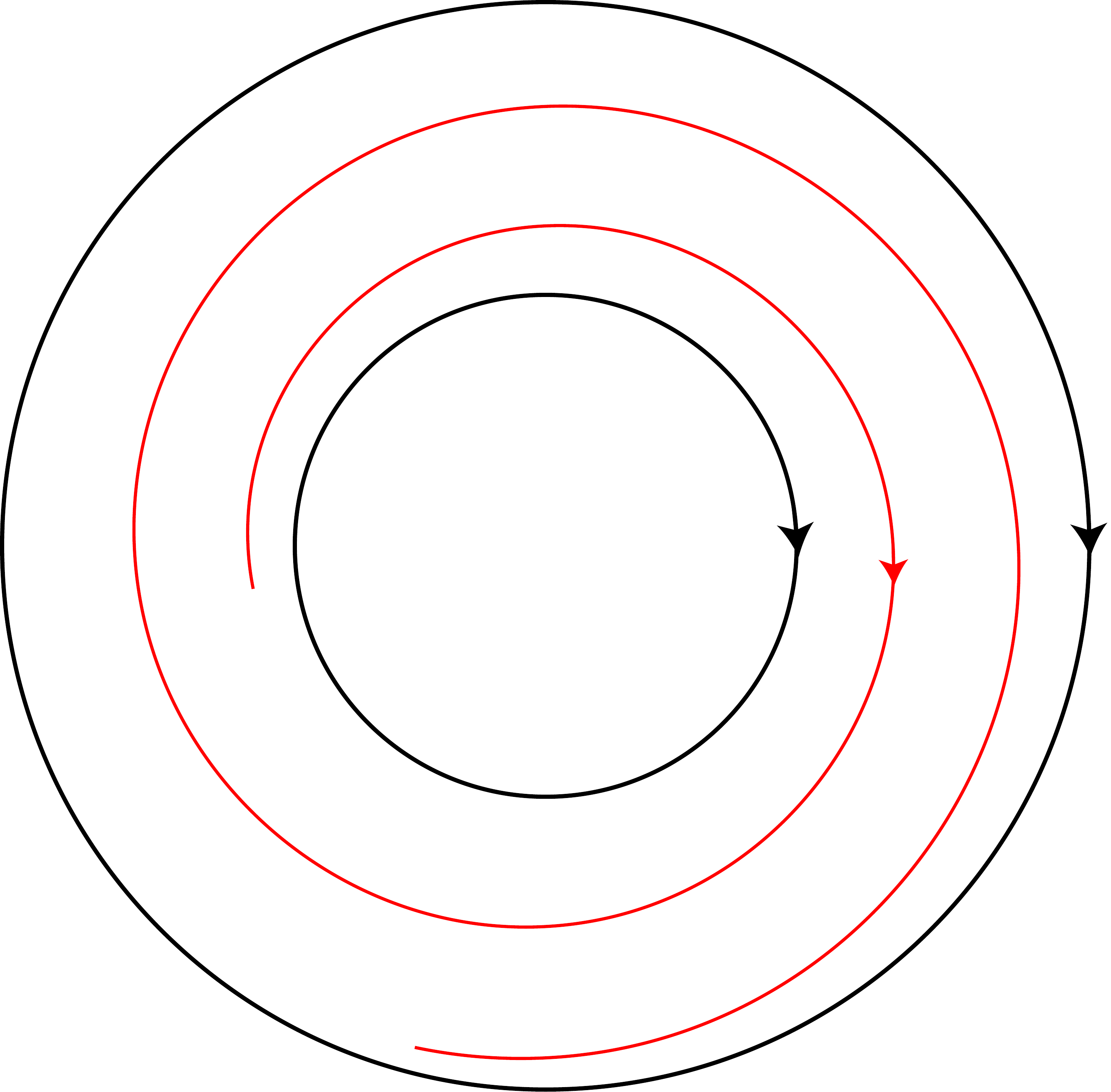}}
\centerline{\footnotesize{(a) Clockwise spiral foliation}}
\end{minipage}
\hfill
\begin{minipage}[b]{0.48\linewidth}
\centering
\centerline{\includegraphics[width=6cm]{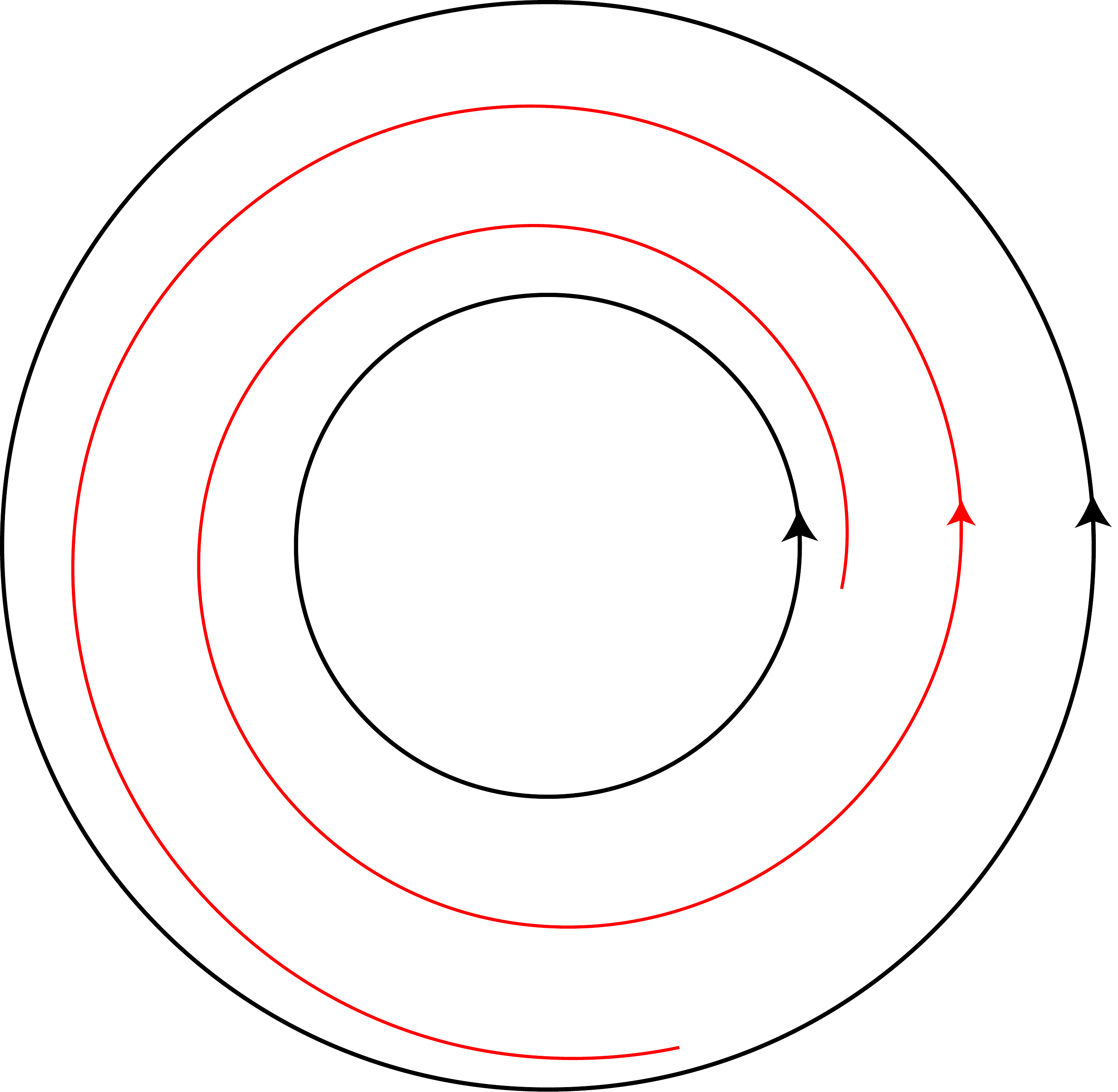}}
\centerline{\footnotesize{(b) anti-clockwise spiral foliation}}
\end{minipage}
\caption{ Direction of rotation of the foliations on $X$  }
\label{dir}
\end{figure*}

\medskip\noindent
{\bf An other foliation of the annulus, denoted by $\C$.}

Now we construct a foliation of the annulus where one boundary component is transverse to the foliation and the other is tangent. The leaves will be homeomorphic to $\R^{+}$, except one circle boundary leaf.\\
Consider the restricted foliation of $\mathcal{F}$ on the set $R_{r}=\{(x,z)\in  \mathbb{R}^2, 1\leq x\leq r\}$, for $r>1$; (or equivalently $\{(x,z)\in  \mathbb{R}^2, r\leq x\leq 1\}$, for $r<1$).\\
The annulus $R_{r}/\!\raisebox{-.65ex}{\ensuremath{\sim}}$ where $(x,z)\sim (x,z+k), k\in \Z$ admits an induced foliation by $\F$ that we will call $\mathcal{C}$.\\
%For the following we need to precise the direction of rotation of the half spiral.\\
%Note that the definition we did for the direction of a spiral rotation (see Figure \ref{dir}), can be done similarly by following any half-spiral leaf in the direction that $x$ grows to orient the single circle leaf.\\
%If we obtain a clockwise direction, then we denote this foliation by $\C_{+or}$ otherwise we denote it by $\C_{-or}$, when this direction of rotation does not matter, we just call it $\C$.\\
%

$\mathcal{C}$ admits a circle boundary leaf, and the other boundary component is transverse to the foliation. All the non-compact leaves are homeomorphic to the ray $\mathbb{R}^+$ and are limiting along the circle boundary leaf as illustrated in Figure \ref{C}.\\

\begin{figure}[htb!]
\includegraphics[width=4cm]{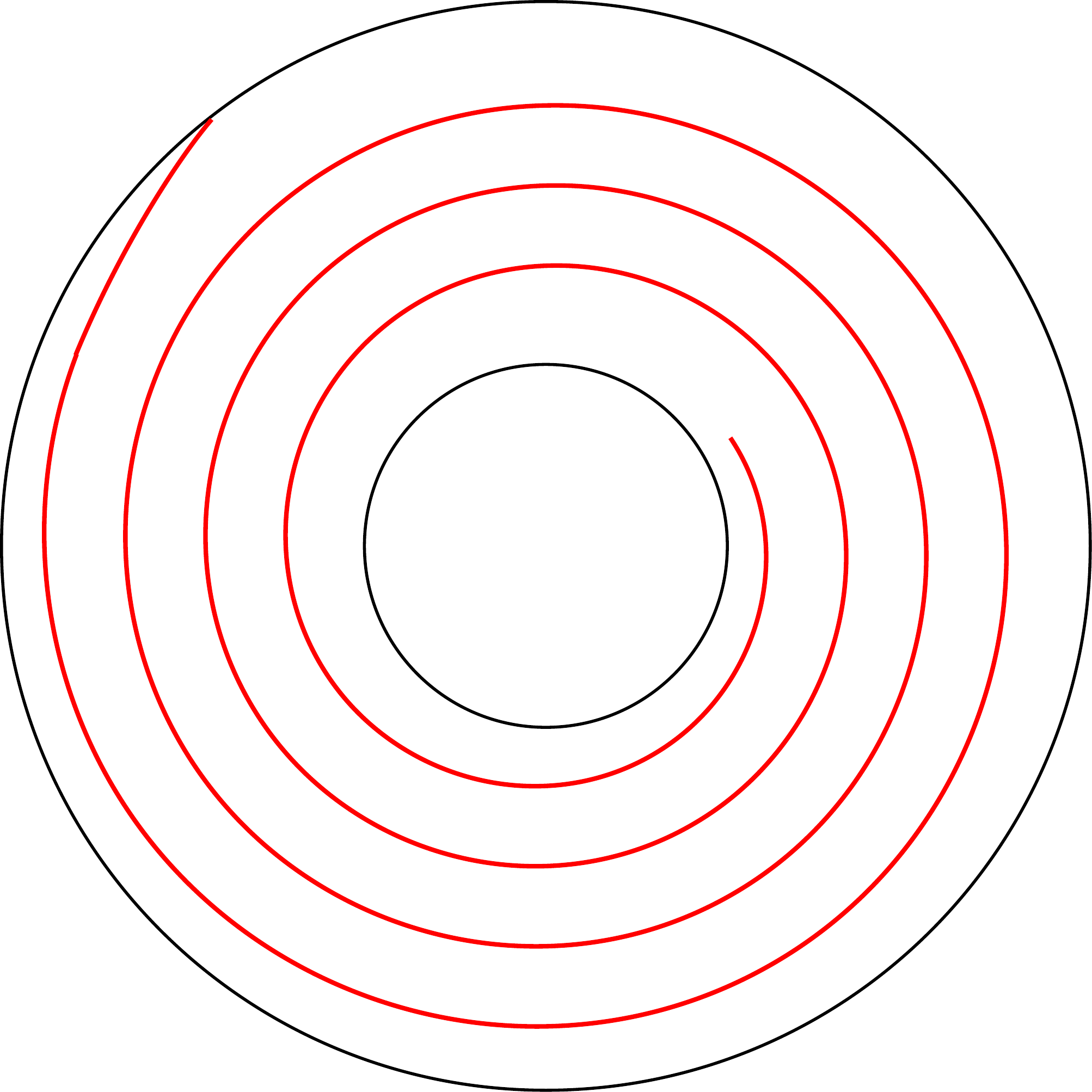}
\caption{One non-compact leaf of the foliation $\mathcal{C}$}\label{C}
\end{figure}

\medskip\noindent
{\bf Taut foliation.}
Let $\mathcal{F}$ be a foliation of a $3$-manifold $M$. An embedded loop, or respectively a properly embedded arc $\gamma$ (if $ \partial M\not = \emptyset $), is called \textbf{\textit{transverse  loop}} or respectively \textbf{\textit{transverse  arc}} if $\forall F\in  \mathcal{F}$ such that $\gamma \cap F\not = \emptyset $, the intersection $\gamma \cap F$ is transverse.\\
$\mathcal{F}$  is \textbf{\textit{taut}}, if for every leaf  $F$ of $\mathcal{F}$ there exists $\gamma$ an embedded transverse loop, or properly embedded transverse arc (if $ \partial M\not = \emptyset $), such that $\gamma \cap F\not = \emptyset $; and if $\mathcal{F}_{|\partial M}$ contains no Reeb annulus.\\

The following theorem is the famous theorem of \cite{Ga} on the existence of taut foliations,
which is stated here for closed $3$-manifolds.

\begin{THM}[D. \cite{Ga}]\label{Gabai}
Let $M$ be a closed $3$-manifold.
If $H_2(M; \mathbb{Q})$ is non-trivial then $M$ admits a taut foliation.
\end{THM}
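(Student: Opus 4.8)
The plan is to follow Gabai's original program, whose engine is the theory of \emph{sutured manifolds} and sutured manifold hierarchies. The first step is to convert the homological hypothesis into a geometrically useful surface. Since $H_2(M;\Qq)$ is non-trivial, clearing denominators produces a non-zero class $\alpha\in H_2(M;\Z)$. By Poincar\'e duality $\alpha$ corresponds to a class in $H^1(M;\Z)\cong[M,\S^1]$; choosing a smooth representative $M\to\S^1$ transverse to a regular value, the preimage is a closed orientable embedded surface $S$ with $[S]=\alpha$. Among all embedded surfaces representing $\alpha$ I would then select one minimizing the \emph{Thurston norm} $\sum_i\max(0,-\chi(S_i))$. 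Because $M$ is irreducible one may discard sphere components, and a norm-minimizing $S$ is incompressible. Crucially, $S$ is homologically essential, so there is a closed loop $c\subset M$ with non-zero intersection number $c\cdot S\neq0$; this loop is what will eventually furnish the transversal through the distinguished leaf.

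The second step is to decompose. Cut $M$ along $S$, setting $M'=M\setminus\mathring{N(S)}$, a compact manifold whose boundary consists of two parallel copies $R_+,R_-$ of $S$ carrying opposite co-orientations. This yields a sutured manifold $(M',\gamma)$ with empty suture $\gamma$ and $\partial M'=R_+\cup R_-$. The point of choosing $S$ norm-minimizing is precisely that $(M',\gamma)$ is then a \emph{taut} sutured manifold in Gabai's sense: $M'$ is irreducible and $R_\pm$ are Thurston-norm minimizing in $H_2(M',\partial M')$. I would verify these tautness conditions directly from the minimality and incompressibility of $S$.

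Next I invoke Gabai's central structural theorem: every taut sutured manifold admits a \emph{sutured manifold hierarchy}
\[
(M',\gamma)=(M_0,\gamma_0)\rightsquigarrow(M_1,\gamma_1)\rightsquigarrow\cdots\rightsquigarrow(M_n,\gamma_n),
\]
a finite sequence of decompositions along well-groomed surfaces $S_i$ in which each $(M_i,\gamma_i)$ remains taut and the terminal piece $(M_n,\gamma_n)$ is a disjoint union of product sutured manifolds $R_i\t I$. Finiteness follows from a complexity that strictly decreases under each decomposition, and tautness is propagated by choosing the decomposing surfaces suitably conditioned. I would cite this as a black box, since reproving it is out of scope, but I would be careful to check that the hypotheses of the version I quote are met by $(M',\gamma)$.

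Finally I would reconstruct the foliation by reversing the hierarchy. Each terminal product piece $R_i\t I$ is foliated by its horizontal leaves $R_i\t\{t\}$, which is visibly taut. Then I glue back one decomposition at a time: to undo the decomposition along $S_i$, I extend the foliation across $N(S_i)$ by inserting a leaf parallel to $S_i$ and spiraling the neighboring leaves into it, matching the two induced foliations on the copies of $S_i$. The decisive claim at every stage is that tautness survives the gluing — that each leaf of the reassembled foliation still meets a closed transversal and that no dead-end or Reeb-type leaf lacking a transversal is created. I expect exactly this step to be the main obstacle, as it is the heart of Gabai's theorem: the groomed surfaces must be arranged so that the transversals coming from the product pieces extend coherently through every newly inserted leaf. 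Carrying the reconstruction to the top yields a taut foliation $\F$ of $M'$ in which $R_\pm$ are leaves; regluing $R_+$ to $R_-$ then produces a taut foliation of the closed manifold $M$ having $S$ as a leaf, through which the loop $c$ with $c\cdot S\neq0$ provides the required closed transversal. Since $\partial M=\emptyset$, tautness reduces to the existence of a closed transversal through each leaf, which the construction supplies, completing the argument.
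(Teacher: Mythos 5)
The paper does not prove Theorem \ref{Gabai} at all --- it quotes the result from \cite{Ga} --- so the only meaningful comparison is with Gabai's original argument, and your outline reproduces it faithfully and correctly: a Thurston-norm-minimizing surface dual to the nontrivial class, the resulting taut sutured manifold $(M',\emptyset)$, Gabai's hierarchy theorem quoted as a black box, and the reassembly of a taut foliation from the product pieces by inserting compact leaves and spiraling neighboring leaves into them. The step you rightly single out as the crux (arranging the regluings so that closed transversals persist through every inserted leaf) is exactly where the real work in Gabai's paper lies, and it is precisely the spiraling construction that the present paper revisits and details in Section \ref{spir}.
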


\medskip\noindent
{\bf Foliated component.}
Suppose $M$ admits a foliation $\mathcal{F}$, and that $\mathcal{F}_{V}$ is a foliation of a submanifold $V$ of $M$. We say that $M$ admits a \textbf{\textit{foliated component}} $\mathcal{F}_{V}$, if the induced foliation by $\mathcal{F}$ on $V$ is isotopic to $\mathcal{F}_{V}$ in $M$.\\

\medskip\noindent
{\bf Foliation preserving homeomorphism.}
Let $M$ be a manifold admitting a foliation $\F$ and $N$ a manifold admitting a foliation $\G$.
An homeomorphism $f : M \rightarrow N$ is a \textbf{\textit{foliation preserving homeomorphism}}, if $f$ sends the leaves of $\F$ on the leaves of $\G$, i.e if $f$ preserves the leaves.\\

\section{Reeb component and Turbulization}\label{Sec-Reeb-turb}
In this section we first define Reeb's component which is a foliation of a solid torus tangent to the boundary, and then we define a particular foliation of $T^2 \times I$, where one torus boundary component is a leaf and the other is transverse to the foliation, which will be called \textit{turbulization} component.\\
  Reeb's component and the process of turbulization were firstly defined by G.\cite{Re}.\\
  Nowadays this construction is very common, and can be found for example in the notes of M. \cite{B1}.\\
  Finally, we define \textit{generalized turbulization}.

%ˆ mettre dans la thse
% \begin{THM}[G. Reeb, \cite{Re}]
%Let $\mathcal{F_{\alpha}}$ be a foliation of $T^2=\mathbb{S}^1\times\mathbb{S}^1$ defined by a closed $1$ form-$\alpha$. Then $\mathcal{F_{\alpha}}$ can be extended to a foliation $\mathcal{F}$ of the solid torus, admitting a Reeb component.
%\end{THM}
%
%
% \begin{THM}[W. Thurston, \cite{Th75}]\label{Th_torus}
%
%Let $T^2=\mathbb{S}^1\times\mathbb{S}^1$, so that $T^2$ bounds a solid torus  $\mathbb{D}^2\times\mathbb{S}^1$ fibred by $\{x\}\times\mathbb{S}^1$ 
%Let $\mathcal{F_{\alpha}}$ be a foliation of $T^2$ transverse to the fibers of the fibration induced by the one on the solid torus.\\
%Then $\mathcal{F_{\alpha}}$ can be extended to a foliation $\mathcal{F}$ of the solid torus, admitting a Reeb component.
%\end{THM}
%
%********\\
%\textcolor{red}{Vrai?a mettre?}
%
%Every dimension $1$-foliation of the $2$-torus $T^2$ can be extended in a foliation of $T^2\times I$ so that $T^2=T^2\times\{0\}$ and $T^2\times\{1\}$ is a leaf of this foliation. ??????????????
%
%
%Apply Theorem \ref{Th_torus} when possible, and cut off the interior of the Reeb component, and you obtain a torus boundary leaf.\\
%When the foliation on $T^2$ is not transverse to the $\{x\}\times\mathbb{S}^1$  fibers, then up to isotopy, we can say that the foliation is tangent to the $\{x\}\times\mathbb{S}^1$ fibers, i.e the leaves are the fibers.
%This foliation can be extended in $T^2\times I$ ?????by this way~:\\
%
%see figure\\
%

\subsection{Reeb component.}

First, we define a foliation of $\mathbb{R}^3$ illustrated in Figure \ref{R3}.\\
Note that this is the foliation of $\mathbb{R}^2$ of Figure \ref{R2} in each vertical plane containing the $z$-axis of $\mathbb{R}^3$.\\ 

Let
 $$f :\mathbb{R}^3\rightarrow\mathbb{R}$$

        \begin{center}
 $(x,y,z)\mapsto(x^2+y^2-1)\times\exp(z)$
\end{center}

$f$ is a  submersion, and defines a foliation $\mathcal{F}$ of $\mathbb{R}^3$, symmetric about the $z$-axis, where~:

\begin{itemize}
\item$f^{-1}(\lbrace 0 \rbrace)$ is a vertical cylinder leaf $C$ centered in $ 0$ with radius $1$.\\

\item $f^{-1}(\lbrace c^2 \rbrace)$ are leaves homeomorphic to a cylinder, because\\ $x^2+y^2=1+c^2\exp(-z)$, hence $x^2+y^2 > 1$. \\
When $z\rightarrow+\infty$, $x^2+y^2 \rightarrow 1$ so the leaves tend toward $C$. \\
When $z\rightarrow-\infty$, $x^2+y^2 \rightarrow +\infty$, i.e. the base of the cylinder is flaring.\\

\item $f^{-1}(\lbrace -c^2 \rbrace)$ are paraboloid leaves which intersect the $z$-axis for\\ $z=2\log (c)$, when $x^2+y^2 < 1$, . \\
 When $z\rightarrow+\infty$, $x^2+y^2 \rightarrow 1$, so the leaves tend toward $C$. \\
\end{itemize}

  \begin{figure*}[htb!]
\includegraphics[width=7cm]{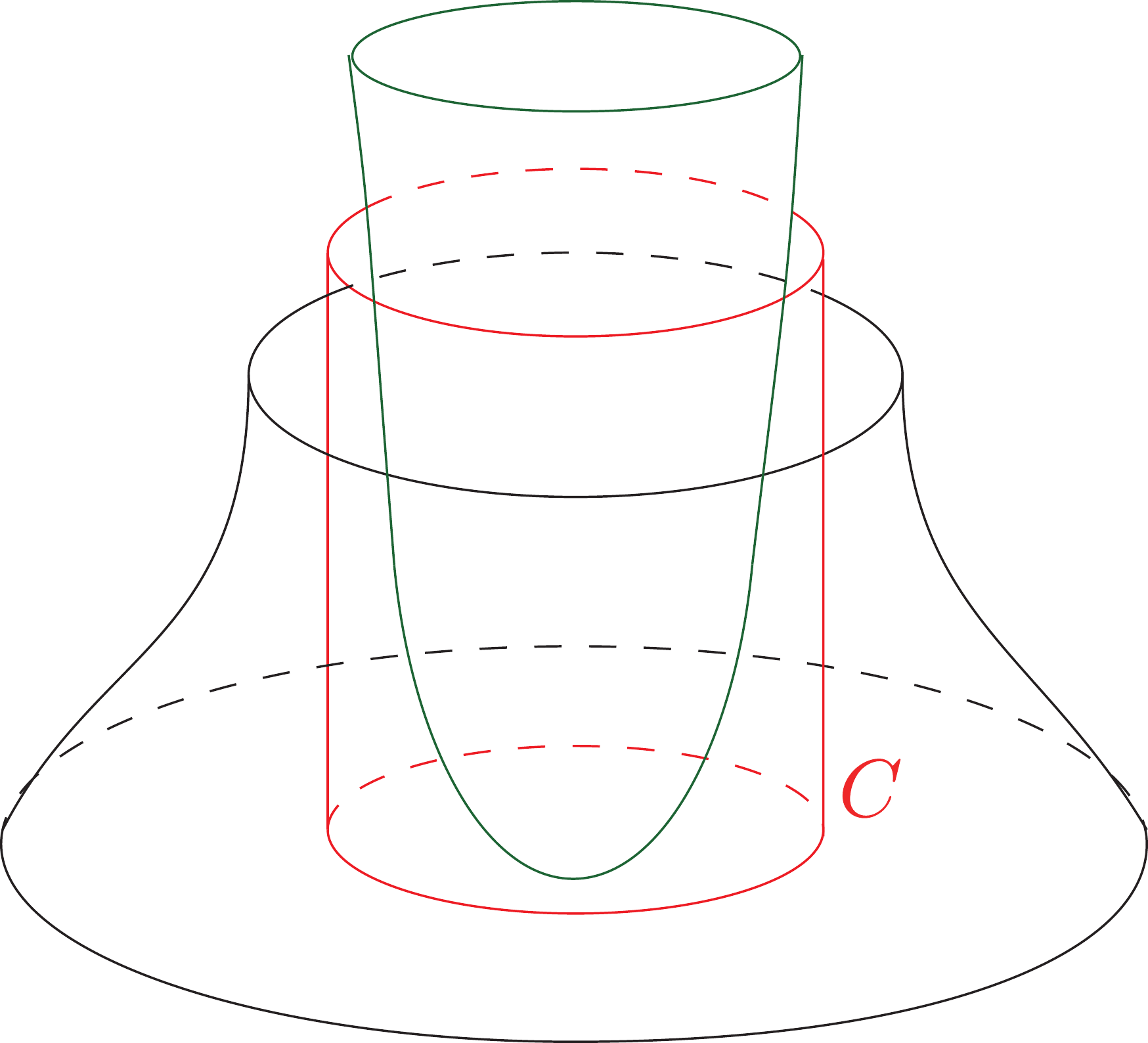}
\caption{Foliation $\mathcal{F}$ of $\mathbb{R}^3$}\label{R3}
\end{figure*}

 Let $\mathcal{F}$ be the restricted foliation on a vertical solid cylinder  $\mathbb{D}^2\times \mathbb{R}$, of radius $r\geq 1$, included in $\mathbb{R}^3$, denoted $C_r$. 
  $\mathcal{F}$ is invariant under integral vertical translations (along the $z$-axis); hence it induces a foliation on the solid torus $\mathbb{D}^2\times \mathbb{S}^1$ denoted $T_r$, of radius $r$.\\
  
  $T_r$ contains $T_1$ which is a solid torus of radius $1$ ($r\geq 1$), and a \textbf{\textit{Reeb component}} is the induced foliation by $\mathcal{F}$ on $T_1$, see Figures \ref{fig:4} and \ref{Reeb}.\\

 \begin{figure*}[htb!]
\includegraphics[width=7cm]{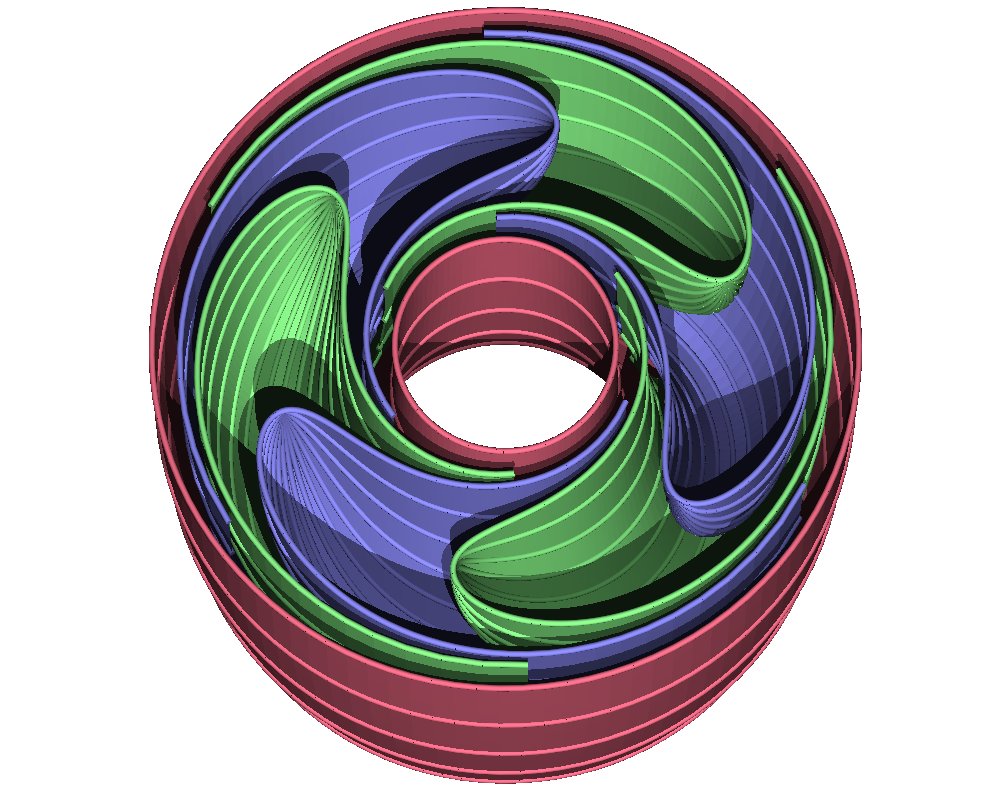}
\caption{(Half) Reeb component (from Wikipedia).}\label{fig:4}
\end{figure*}
Note that a Reeb annulus correspond to a $2$-dimensional Reeb component.\\

\begin{DEF}
Let $\mathcal{F}$ be a foliation of a $3$-manifold $M$.  $\mathcal{F}$ is \textbf{\emph{Reebless}} if it does not admit any Reeb component.
\end{DEF}

\subsection{ Turbulization.}
In this subsection we define by two interesting different ways the \textit{ turbulization component } denoted for all the following by $\T$. We talk about \textit{turbulization} when one torus boundary is foliated by circles. So we first need the following definition.

\begin{DEF}
A \textbf{\emph{circle foliation}} on a torus, (respectively on an annulus), is a foliation of $T^{2}$ (respectively of  $\S^{1}\t I$) where all the leaves are homeomorphic to $\S^{1}$. Hence, the leaves are parallel copies of an essential simple closed curve on $T^{2}$ (respectively on $\S^{1}\t I$).
\end{DEF}

\begin{DEF}
 Let us call \textbf{$\mathcal{T}$} the foliation induced by $\mathcal{F}$ on $T_r\backslash \mathring{T_1}$, for $r>1$ (or equivalently on $T_1\backslash \mathring{T_r}$, for $r<1$). The resulting foliated manifold is homeomorphic to $T^2\times I$, with a torus boundary leaf ($\partial T_{1}$), and another torus boundary component, transverse to the foliation $\mathcal{T}$, which induces a circle foliation on it, as in Figure \ref{fig:5}.\\
 \end{DEF} 
 \begin{DEF}
The foliation $\mathcal{T}$ is trivially transversely oriented. We will denote $\mathcal{T}^+$ (respectively $\mathcal{T}^-)$, the transversely oriented foliation of $T^2\times I$, where the transverse orientation on the torus leaf points out of (respectively into) $T^2\times I$.
\end{DEF}
  
  \begin{figure}[htb!]
\includegraphics[width=9cm]{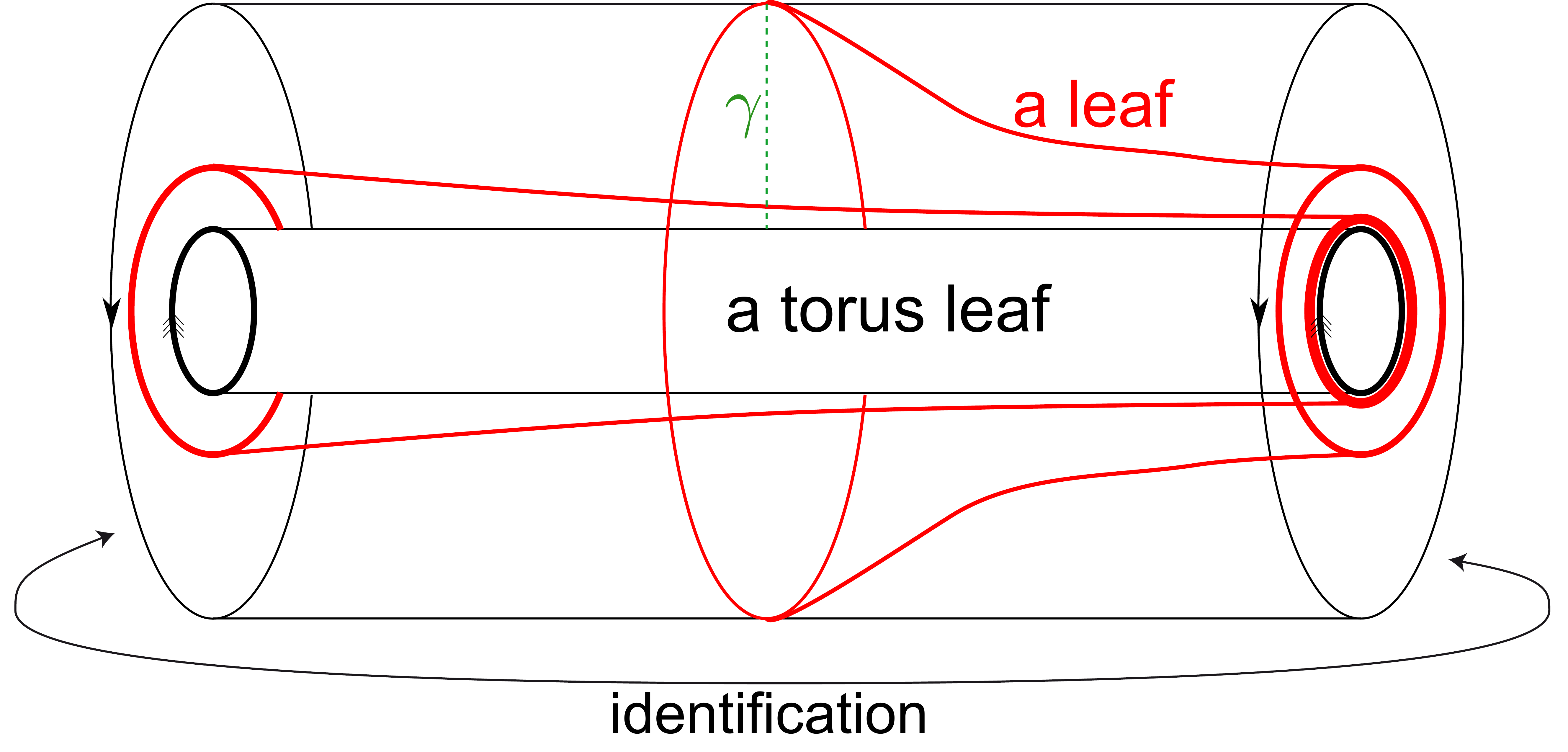}
\caption{Foliation $\mathcal{T}$ of  $T^{2}\times I$~: Turbulization}\label{fig:5}
\end{figure}
  
  \begin{DEF}
  Let $M$ be a manifold with a torus boundary component $T$ and admitting a foliation which induces on $T$ a circle foliation.\\
The process of \textbf{\emph{turbulization}} consists on pasting on $T$ (by homeomorphism) a $T^{2}\times I$ component, foliated by $\mathcal{T}$ (with the notations above).
\end{DEF}

Roughly speaking, the process of turbulization, changes a circle foliation on a torus to a torus leaf, as in the trivial following example in Figure \ref{Reeb}.\\

\begin{figure}[htb!]
\includegraphics[width=8cm]{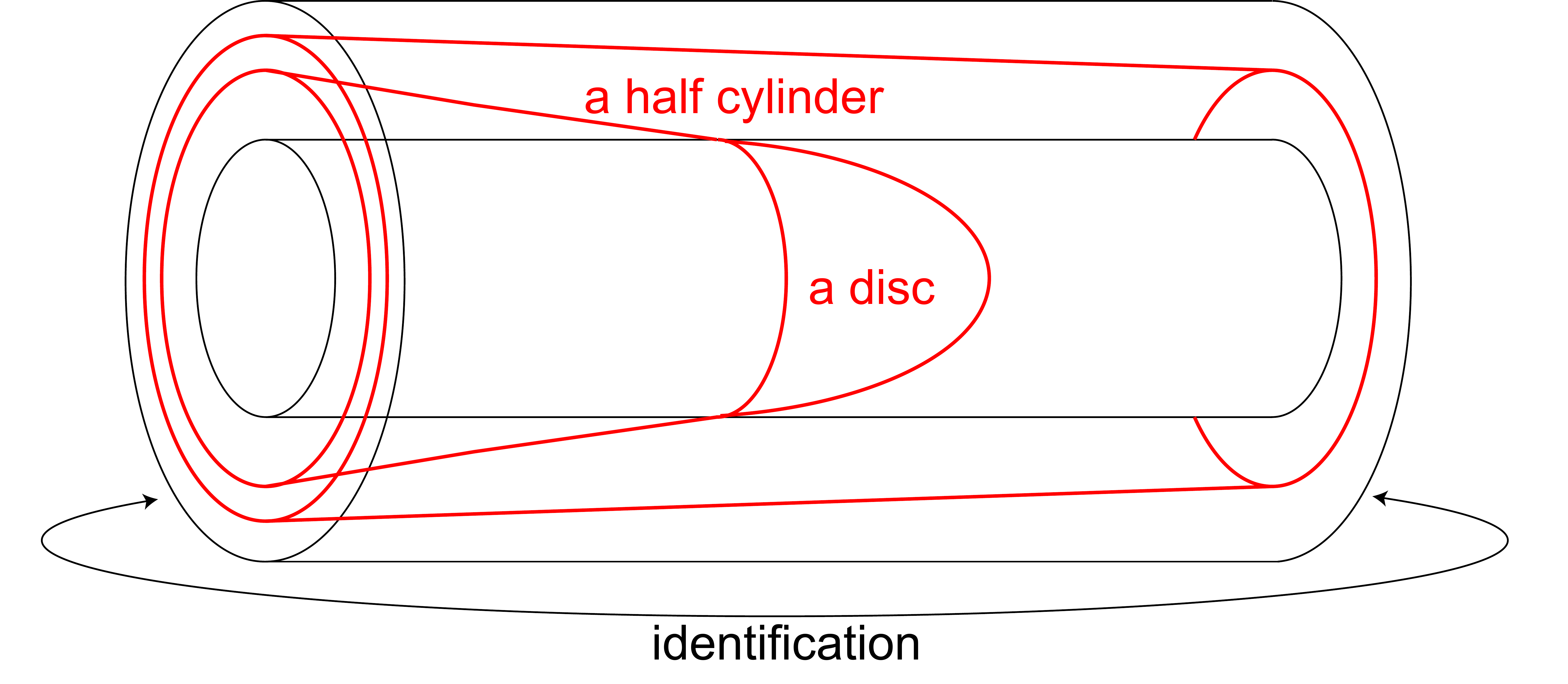}
\caption{Reeb component}\label{Reeb}
\end{figure}

\begin{RK}\label{Reeb-turb}
A Reeb component contains the foliation $\mathcal{T}$.
\end{RK}

Let us give another definition, that we will also use later.\\
Let $A=\{(x,\theta), x\in [0,1], \theta \in ]-\pi,\pi]\}$ be an annulus embedded in $\R^3$, and consider the following foliation called $\F$ on $A\times I$.\\
Let $f$ be a diffeomorphism of the unit interval such that $\{0\}$ and $\{1\}$ are fixed point, and $f$ is strictly increasing.\\
Denote for each $x\in [0,1]$, and $z\in I$, the circle $\lambda^{z}_{x}=\{(x,\theta,z), \theta \in ]-\pi,\pi]\}$ in $A\t I$.\\
Let us define the foliation $\F_{f}$.\\
The leaves of $\F_{f}$ are the annuli $A_{\alpha}$ bounded by $\lambda^{0}_{\alpha}$ and $\lambda^{1}_{f(\alpha)}$ in $A\times I$, for each $\alpha \in [0,1]$, as in Figure \ref{susp} where we have chosen $f(t)>t$. \\
This foliation $\F_{f}$ is called the \textbf{\textit{suspension foliation}} of $f$ along $\lambda_{0}^{0}$ on $A\t I$.\\
More precisely $\dps A_{\alpha}=\bigcup _{z\in [0,1]} \lambda_{[f(\alpha)-\alpha]z+\alpha}^{z}$.\\
Indeed the segment joining $\alpha$ to $f(\alpha)$ for a fixed angle $\theta \in ]-\pi,\pi]\}$, with the chosen coordinates, is defined by the equation $x=[f(\alpha)-\alpha]z+\alpha$ in $A\times I$.\\
Obviously $A_{0}$ and $A_{1}$ are vertical leaves.\\

\begin{figure}[htb!]
\includegraphics[width=12cm]{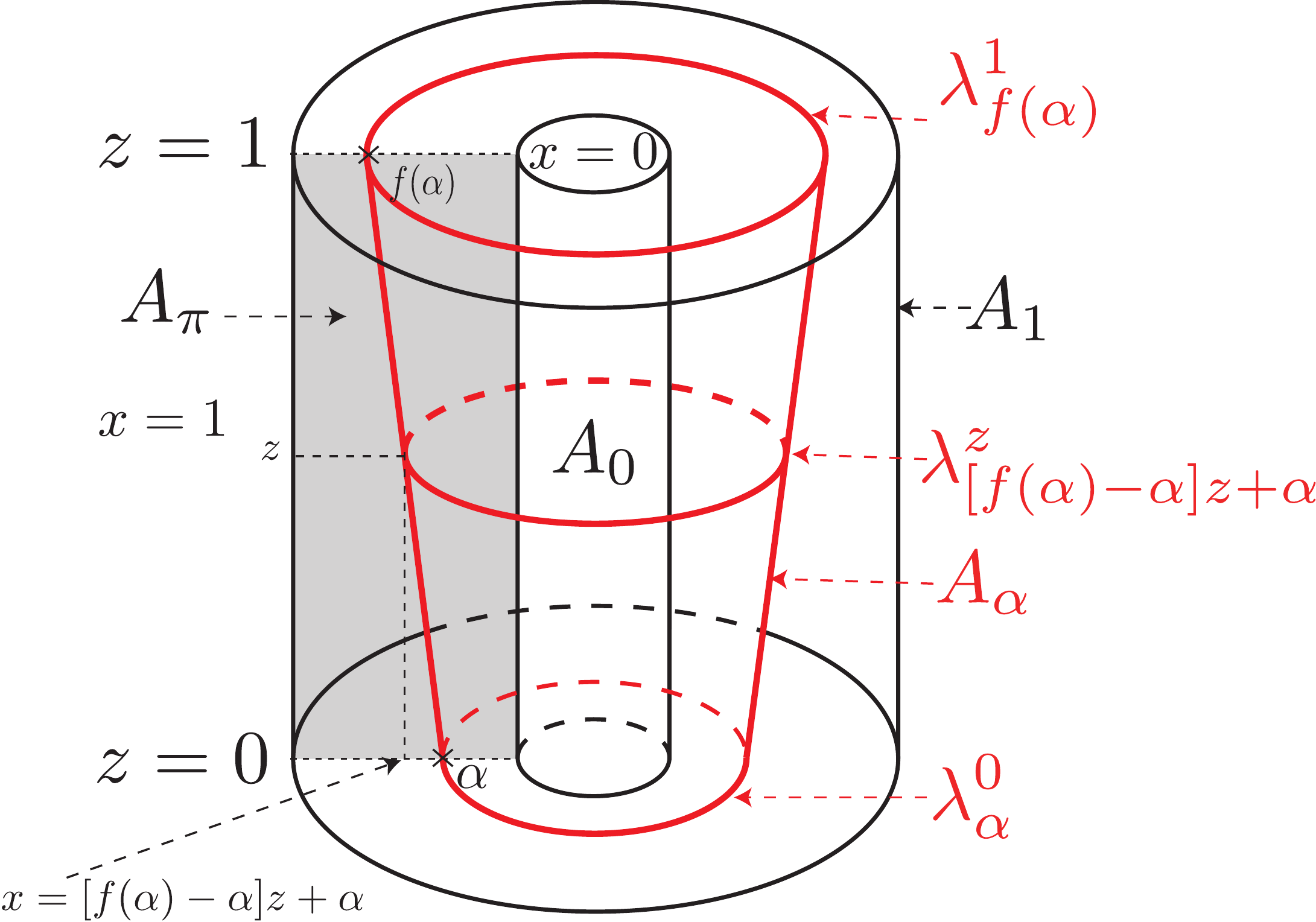}
\caption{Foliation $\F_{f}$ of $A\times I$}\label{susp}
\end{figure}

That leads us to construct a foliation on $T^{2}\times I$ as follows:\\
Consider $T^{2}\times I= (A\times I)/\!\raisebox{-.65ex}{\ensuremath{\sim}}$, where $(x,\theta,0)\sim (x,\theta,1)$.\\
$\F_{f}$ induces a foliation on $T^{2}\times I$ where $T^{2}\times \{0\}$ and $T^{2}\times \{1\}$ are torus leaves.\\
Now, if we choose  $f$ so that $f(t)>t$ or $f(t)<t$, for all $t\in \mathring{I}$, the foliation $\mathcal{T}$ is isotopic to the induced foliation by $\F_{f}$ on $\{(x,\theta,z), x\in [0,\frac{1}{2}], \theta \in ]-\pi,\pi], z\in [0,1]\}/\!\raisebox{-.65ex}{\ensuremath{\sim}}$, or equivalently on $\{(x,\theta,z), x\in [\frac{1}{2},1], \theta \in ]-\pi,\pi], z\in [0,1]\}/\!\raisebox{-.65ex}{\ensuremath{\sim}}$.\\
Indeed the torus $\{(\frac{1}{2},\theta,z), \theta \in ]-\pi,\pi], z\in [0,1]\}/\!\raisebox{-.65ex}{\ensuremath{\sim}}$ is everywhere transverse and admits a circle foliation.\\
Note that if $\exists t_{0} \in \mathring{I} / f(t_{0})=t_{0}$ that induces an interior torus leaf in $\F_{f}$.

\begin{RK}\label{A-pi-turb}
Note that the induced foliation by $\F_{f}$ on the transverse annulus $A_{\pi}=\{(x,\pi,z), x\in [0,1], z\in [0,1]\}/\!\raisebox{-.65ex}{\ensuremath{\sim}}$ in $T^{2}\t I$ is described on Figure~\ref{susp-ann}. 
\end{RK}

\begin{RK}
Note that the choice $f(t)>t$ or $f(t)<t$ does not imply that there is the same direction of rotation along the torus leaf $T=\{(1,\theta,z), \theta \in ]-\pi,\pi], z\in [0,1]\}/\!\raisebox{-.65ex}{\ensuremath{\sim}}$. That is the reason why we give the following definition.
\end{RK}

  \begin{figure*}[htb!]
\begin{minipage}[b]{0.48\linewidth}
\centering
\centerline{\includegraphics[width=6cm]{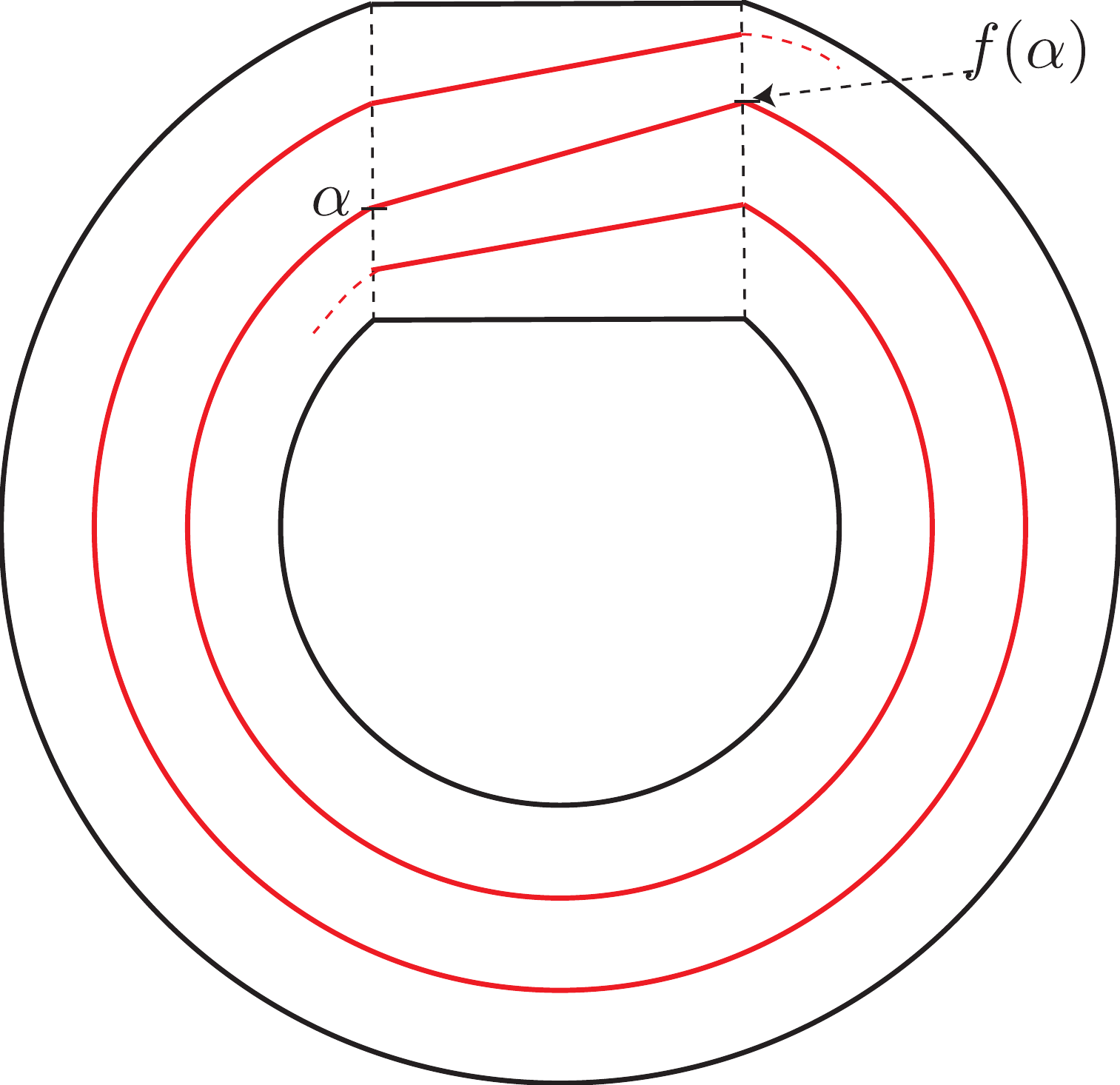}}
\centerline{\footnotesize{(a) }}
\end{minipage}
\hfill
\begin{minipage}[b]{0.48\linewidth}
\centering
\centerline{\includegraphics[width=6cm]{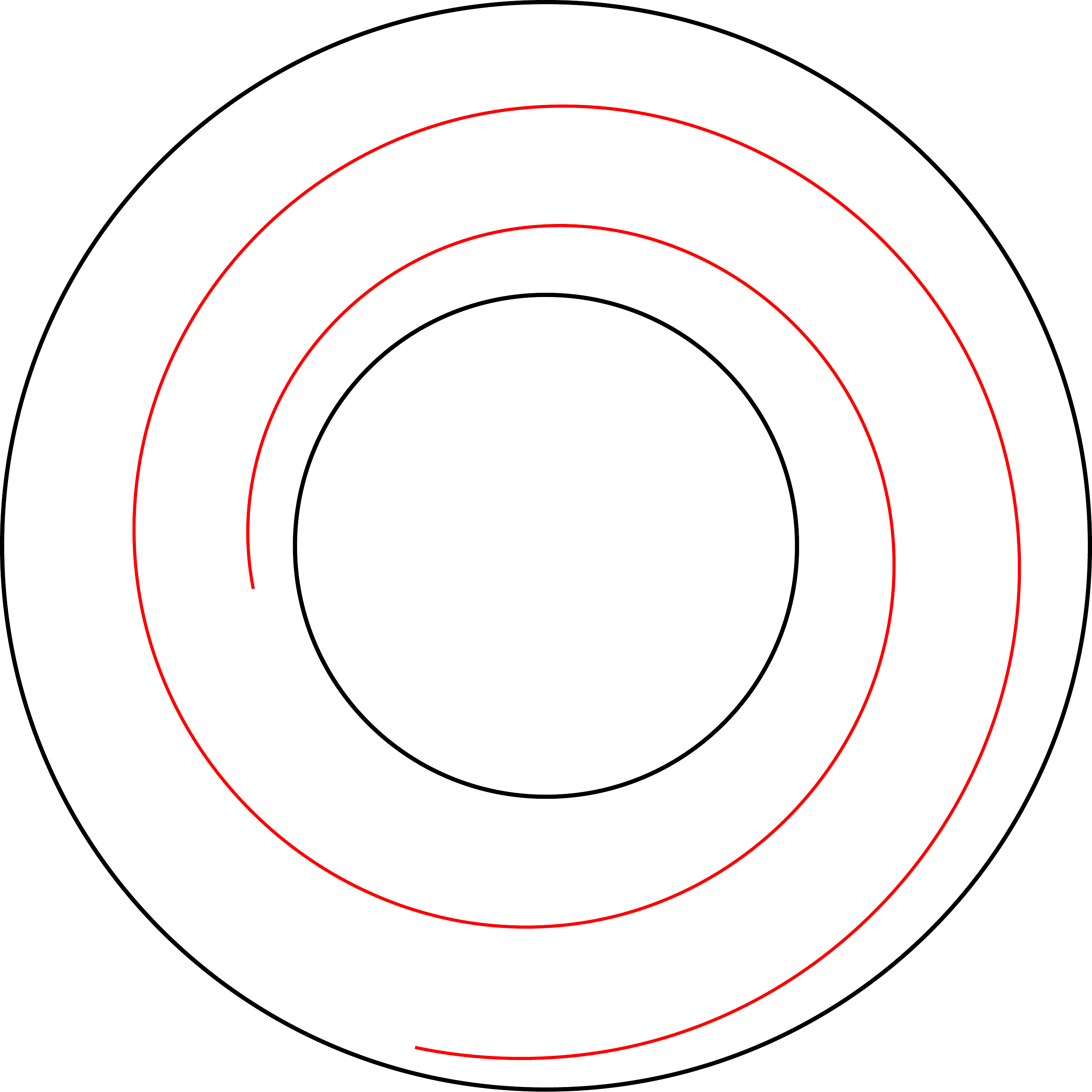}}
\centerline{\footnotesize{(b) }}
\end{minipage}
\caption{ Induced foliation by $\mathcal{F}$ on $A_{\pi}$ (isotopic representations)}
\label{susp-ann}
\end{figure*}

\begin{DEF}
If $f(t)>t$ we say that $\F_{f}$ is a clockwise foliation, and if $f(t)<t$, we say that $\F_{f}$ is a anti-clockwise foliation.
\end{DEF}

\begin{RK}
Note that if $\F_{f}$ is a clockwise (respectively anti-clockwise) foliation then the induced foliation on $A_{\pi}$ is a clockwise (respectively anti-clockwise) spiral foliation.
\end{RK}

Turbulization has a lot of applications; one of the most famous is the following Theorem from W.B.R \cite{Li} and also showed independently by S.P \cite{NZ} (helped by H. Zieschang)~:

\begin{THM}
Every $3$-manifolds admit a codimension one foliation, possibly with a Reeb component.
\end{THM}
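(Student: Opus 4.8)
The plan is to build a codimension one foliation on an arbitrary compact, connected, orientable $3$-manifold $M$ by reducing the problem, via a Heegaard splitting, to the known local pictures assembled in the previous sections, namely the Reeb component and turbulization. First I would invoke the existence of a Heegaard splitting $M=H_{1}\cup_{\Sigma}H_{2}$, where $H_{1}$ and $H_{2}$ are handlebodies of genus $g$ glued along their common boundary surface $\Sigma$ of genus $g$. Since every closed orientable $3$-manifold admits such a decomposition, this gives a uniform starting point. The goal is then to foliate each handlebody in a way that is compatible along $\Sigma$, so that the two foliations match up (or can be joined through a collar) to a global foliation.

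The key steps are as follows. First I would foliate a single handlebody of genus $g$. The standard device is to write the handlebody as a regular neighborhood of a wedge of $g$ circles (a spine), thicken it, and foliate the resulting object so that the boundary $\Sigma$ is a leaf; near each handle one inserts a local model, and the places where the circle foliations on the tori do not extend are repaired precisely by the turbulization process defined earlier, which converts a circle foliation on a torus into a torus leaf by pasting in a $\mathcal{T}$ component. Concretely, a solid torus is foliated by a Reeb component, whose boundary is a torus leaf, and more general handlebodies are assembled from such pieces together with $T^{2}\times I$ turbulization collars so that in all cases the boundary surface $\Sigma$ becomes a union of leaves (or a single leaf). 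Second I would do the same for $H_{2}$, arranging that its boundary is also leafwise, and then glue the two foliated handlebodies along $\Sigma$. Since $\Sigma$ is a leaf on both sides, the two foliations fit together across $\Sigma$ to produce a codimension one foliation of all of $M$; the Reeb components and turbulization components introduced along the way account for the phrase ``possibly with a Reeb component'' in the statement.

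The main obstacle I expect is the genus-$g$ handlebody construction and the matching along $\Sigma$: foliating a solid torus by a Reeb component is routine from the earlier sections, but extending this to a handlebody of arbitrary genus and ensuring that the induced foliations on $\Sigma$ from the two sides are compatible requires care. The crucial leverage comes exactly from turbulization: by inserting a $\mathcal{T}$ component along any torus (or more generally by making $\Sigma$ a leaf on each side), one removes the obstruction to gluing, because a leaf can always be glued to a leaf. One subtlety is that the transverse orientations coming from the two handlebodies need not be coherent, which is harmless here since tautness is not claimed; the turbulization collars absorb the mismatch and, as Remark \ref{Reeb-turb} records, naturally give rise to Reeb components. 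Thus the only genuinely delicate point is the explicit local model that foliates each handle and shows $\Sigma$ is a leaf; once that is in hand, the gluing is formal.

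For manifolds with boundary the same argument applies after capping off or working with a relative Heegaard decomposition, foliating each boundary torus by a circle foliation and then turbulizing, so that the boundary consists of torus leaves; the interior is filled in exactly as above. I would close by remarking that the resulting foliation is transversely orientable when $M$ is orientable, since each local piece ($\mathcal{T}$, the Reeb component, and the collars) is transversely oriented and the orientations can be chosen consistently across the gluing locus.
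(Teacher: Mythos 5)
There is a genuine gap, and it sits exactly at the point you yourself flag as ``the only genuinely delicate point'': foliating a genus $g$ handlebody $H_{g}$ so that its boundary $\Sigma$ is a leaf is \emph{impossible} for $g\geq 2$. Indeed, if such a foliation existed, a transverse line field would exist on $H_{g}$ which is transverse to the boundary leaf $\Sigma$; passing if necessary to the transverse orientation double cover, one gets a nowhere-zero vector field on a handlebody pointing either everywhere outward or everywhere inward along its (connected) boundary, and Poincar\'e--Hopf then forces $\chi(H_{g})=1-g=0$ in the first case (or $\chi(H_{g})-\chi(\Sigma)=g-1=0$ in the second). So only the solid torus ($g=1$) admits a foliation tangent to its boundary --- this is why the Reeb component exists but has no higher-genus analogue, and why turbulization and spiraling in this paper always produce \emph{torus} leaves. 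Since a general $3$-manifold has no genus-one Heegaard splitting, the gluing step of your plan never gets off the ground: you cannot make $\Sigma$ leafwise on either side, and ``a leaf can always be glued to a leaf'' has nothing to be applied to. The same Euler characteristic obstruction undermines your final paragraph on manifolds with boundary, whose components need not be tori.

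The paper's proof avoids this entirely by using the surgery presentation rather than a Heegaard splitting: every closed $3$-manifold is obtained from $\mathbb{S}^{3}$ by removing a tubular neighborhood of a link $L$ and regluing. One starts from the Reeb foliation of $\mathbb{S}^{3}$ (two Reeb components glued along their torus leaf), isotopes $L$ transverse to it so that $N(L)$ is foliated by meridian disks, removes $\mathring{N(L)}$, turbulizes each boundary torus into a torus leaf via a $\mathcal{T}$ component, and glues back Reeb components along these torus leaves. Every boundary surface encountered is a torus, where $\chi=0$ and the obstruction vanishes. If you want to keep a decomposition-based argument, this (or an open book / spinnable structure argument) is the decomposition to use; the Heegaard route cannot be repaired in the form you propose.
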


\begin{proof}(idea)
We may recall that every closed $3$-manifold $M$ is obtained by deleting a tubular neighborhood of a link $L$ in $\mathbb{S}^3$, and by gluing it back differently. Let us consider $\mathbb{S}^3$ foliated by two Reeb's component glued along their torus leaf. We can isotope $L$ so that it meets transversely the leaves of this foliation. By choosing a thin enough tubular neighborhood of $L$, denoted by $N(L)$, we may assume that the induced foliation on $N(L)$ is the one by disks transverse to the boundary of $N(L)$. Then we can remove (the interior of) $N(L)$,  and glue some $\mathcal{T}$ components along each boundary component of $\mathbb{S}^3 \backslash \mathring{N(L)}$, i.e we apply the process of turbulization. Then we obtain a manifold with torus boundary leaves, and it remains to glue Reeb's component along those boundary leaves by the well chosen way, to obtain $M$ with a foliation (with Reeb components).
\end{proof}

\subsection{Generalized turbulization}
Turbulization can be defined in a more general context.\\
The idea of turbulization is to extend a foliation of $T^{2}\times \{1\}$ (either by circle or dense on the torus) in $T^{2}\times I$ to obtain $T^{2}\times \{0\}$ as a torus leaf.\\
In the preceding paragraph we have done it for a circle foliation on $T^{2}\times \{1\}$, here we want to do a similar construction for a dense foliation on $T^{2}\times \{1\}$.
Indeed, when the foliation on a torus is $\C^{2}$, A. \cite{De} and C.L. \cite{Si} showed that either there is a circle leaf or the foliation is dense. When there is a circle leaf there are two cases, either this is a circle foliation, or there are spiral leaves between circles leaves. The last case will be taken in account with spiraling, while the former case has already been studied.\\

Let us formulate it more precisely.\\

One way to define generalized turbulization is as follows~:\\
Consider $A\times I= \{(x,\theta,z), x\in [0,1], \theta \in ]-\pi,\pi], z\in [0,1]\}$, ($A$ is an annulus).\\
For each $z\in [0,1]$ set $A_{z}= \{(x,\theta,z), x\in [0,1], \theta \in ]-\pi,\pi]\}$.\\
Now foliate each $A_{z}$ by the foliation $\mathcal{C}$ (of Figure \ref{C}), to obtain a foliation on $A\times I$ by $\mathcal{C}\times I$.\\

This foliation is invariant by rotation along the $z$-axis; hence that induces a foliation of $T^{2}\times I$ by identifying $A_{0}$ to $A_{1}$ by a foliation preserving homeomorphism $f$ from $A_{0}$ to $A_{1}$ such that $f(\{(0,\theta,0),\theta\in]-\pi,\pi]\})=\{(0,\theta,1),\theta\in]-\pi,\pi]\}$, and $f$ sends a spiral leaf on a spiral leaf, for example any rotation.\\ 
Consider $T^{2}\times I\cong (A\times I)/\!\raisebox{-.65ex}{\ensuremath{\sim}}$  where $((x,\theta),0)\sim (f(x,\theta),1)$.\\
Note that here $T^{2}\times \{x\}\cong  \{(x,\theta,z), \theta \in ]-\pi,\pi], z\in [0,1]\}/\!\raisebox{-.65ex}{\ensuremath{\sim}}$, for each $ x\in [0,1]$.\\
\begin{DEF}
We call this foliated component a generalized turbulization component, and we denote it by $\mathcal{T}_{*}(f)$, as illustrated in Figure \ref{T-*}.\\
\end{DEF}

\begin{figure}[htb!]
\includegraphics[width=7cm]{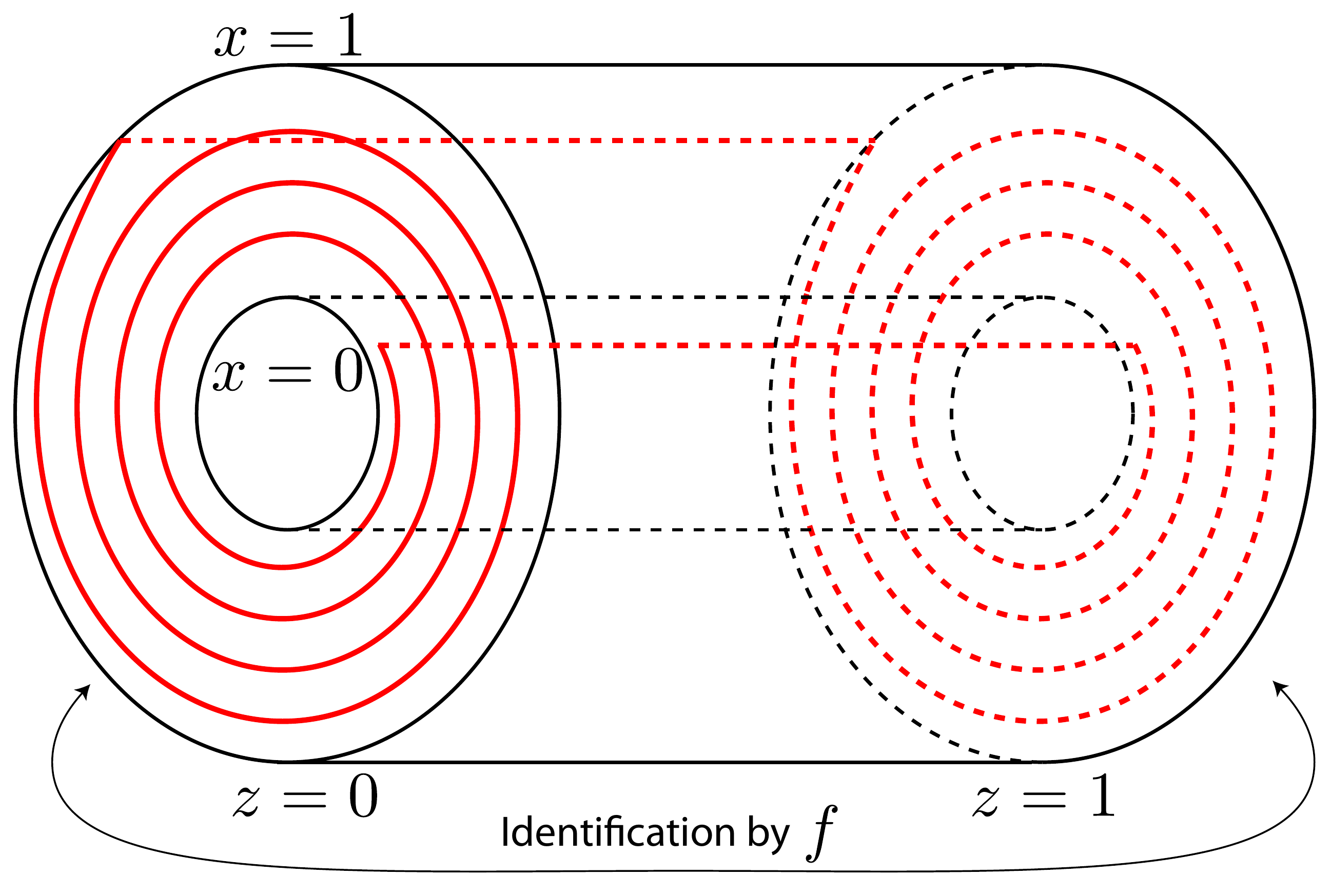}
\caption{Generalized turbulization~: foliation $\mathcal{T}_{*}(f)$}\label{T-*}
\end{figure}

There are two crucial examples :\\
When $f$ is a rotation rational rational angle, the non-compact leaves are homeomorphic to $\R^{+}\times \S^{1}$, and there is one compact leaf~: the torus $T^{2}\times \{0\}$. Moreover the leaves of the circle foliation on $T^{2}\times \{1\}$ have rational slopes (recall that those are essential simple closed curves).\\
Then, note that $\mathcal{T}_{*}(f)$ (of Figure \ref{T-*}) and $\mathcal{T}$ (of Figure \ref{fig:5}) are homeomorphic.
When $f$ is a rotation of irrational angle, there is no cylinder leaf, i.e all the non-compact leaves are homeomorphic to $\R^{+}\times \R$, and the induced foliation on $T^{2}\times \{1\}$ is dense (corresponds to irrational slopes).\\

\section{Spiraling} \label{spir}

Turbulization extends a circle foliation or a dense foliation on a torus $T^{2}\t \{0\}$ in a foliation of the $3$-manifold $T^{2}\times I$ such that $T^{2}\t \{1\}$ is a leaf (turbulization).\\
The goal of spiraling (here) is to extend a foliation on the torus by spirals and circles.\\

For this we use the construction of D. \citeauthor{Ga}, who defines spiraling in a more general context which is the following.\\
Let $S_{g}$ be a closed orientable compact genus $g\geq1$ surface. We start with a foliation $\F$ on $S_{g}\times \{0\}$ which has a two dimensional leaf and an annulus with a one dimensional foliation tangent to its boundary.\\
Then, we extend it to a foliation of $S_{g}\times I$, where $S_{g}\times \{1\}$ is a leaf.\\

Here we explain this construction providing more details and generalize it when $g=1$.\\

We first construct a foliation of $S_{g}\times I$ that we will call \textit{spiraling component}. We proceed into four steps which are subsections.\\
\begin {itemize}
\item Step 1 : Notations and conventions, we fix $\delta$ and $\lambda$ two essential simple closed curve on $S_{g}$ such that $\#(\lambda \cap \delta)=1$.
\item Step 2 : Suspension foliation along $\lambda$. 
\item Step 3 : Superposition along $\delta$.
\item Step 4 : Repeating infinitely many times Step $3$ (infinite induction).\\
\end{itemize} 

Subsection \ref{attach} applies this construction to extend a foliation $\F$ of a manifold $M$ with a boundary component $S_{g}$, such that $\F_{|S_{g}}$ admits a transverse annulus to a foliation of $M\cup (S_{g}\t I)$ such that $S_{g}=S_{g}\t \{0\}$ and $S_{g}\t\{1\}$ is a leaf.

Finally, Subsection \ref{gen-spi} generalizes spiraling when $g=1$ and when $S_{g}\t \{0\}$ admits Reeb annulus.
\subsection{Step 1}

We consider a closed compact surface of genus $g\geq 1$, denoted $S_{g}$, with a non-separating simple closed curve $\delta$ embedded in $S_{g}$. We set $A_{\delta}=\delta\times I$ a regular neighborhood of $\delta$ in $S_{g}$ and we make the confusion between $\delta$ and $\delta\times \{0\}$.\\
Let $\lambda$ be simple closed curve embedded in $S_{g}$ whose geometric intersection number with $\delta$ is one. Note that it induces that $\lambda$ is non-separating. Similarly we will denote $A_{\lambda}=\lambda\times I$ a regular neighborhood of $\lambda$ in $S_{g}$ and we make the confusion between $\lambda$ and $\lambda\times \{0\}$.\\

\begin{figure}[ht!]
\includegraphics[width=10cm]{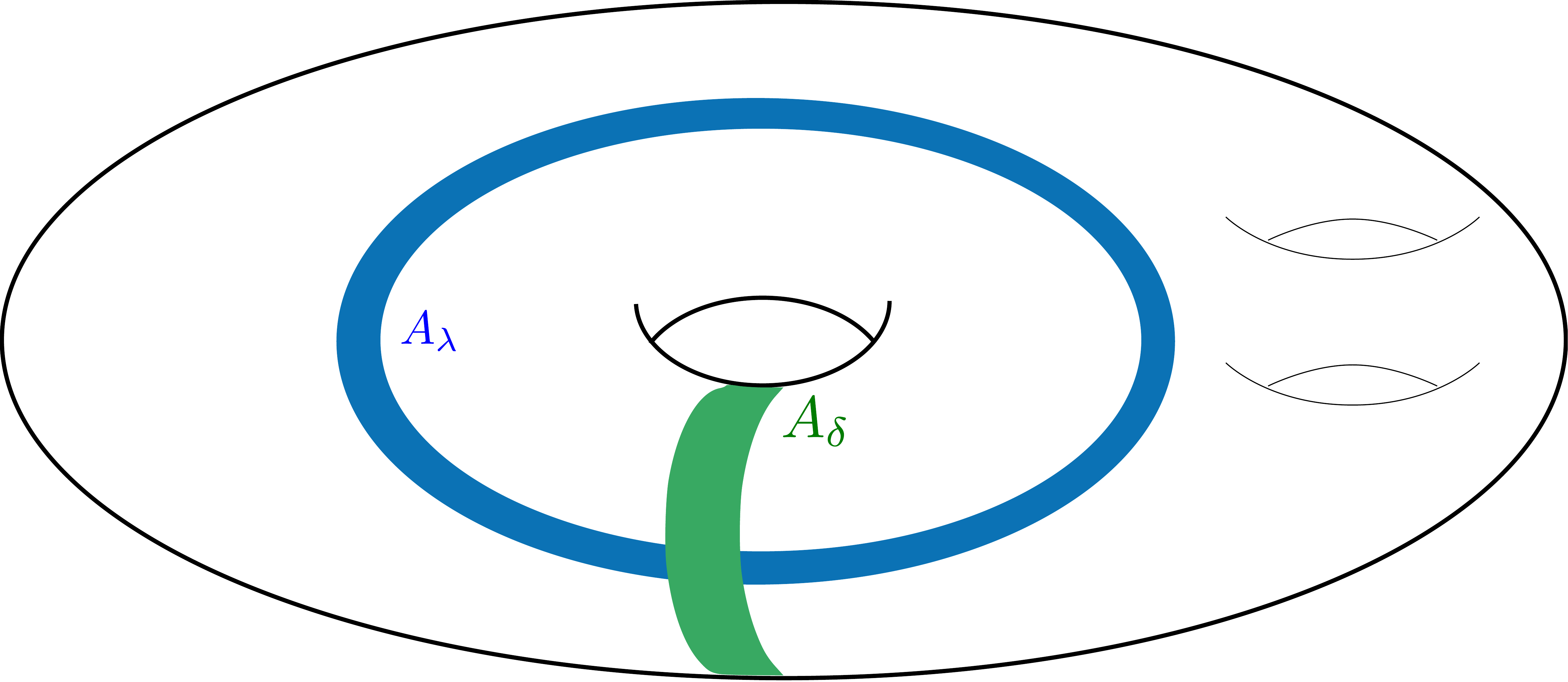}
\caption{A choice of $\lambda$ and $\delta$.}\label{lambda}
\end{figure}

Consider the product foliation on $(S_{g}\bsl A_{\lambda})\t I$ and denote the leaves by $Q_{t}=(S_{g}\bsl A_{\lambda})\t \{t\}, t\in I$.

\subsection{Step 2}

Here we construct a particular foliation of $S_{g}\t I$, where the two boundary components are leaves, and the interior leaves are non-compact.\\

Let $f$ be a strictly increasing diffeomorphism of $I$ such that $f(0)=0$ and $f(1)=1$.
 Consider the suspension foliation of $f$ along $\lambda\t\{0\} \t \{0\}$, in $\lambda\t\{0\} \t I$, where the leaves are the annuli cobounded by $\lambda \t \{0\} \t \{t\}$ and $\lambda \t \{1\} \t \{f(t)\}$ in $A_{\lambda}\t I$, (see Figure \ref{susp}).
Now extend the product foliation on $S_{g}\t I$ by gluing this foliated component $A_{\lambda}\t I$ by the identity on $(S_{g}\bsl A_{\lambda})\t I$, and we denote the resulting foliation of $S_{g}\t I $ by $\F_{f}$.
To draw $\F_{f}$ more easily, we represent $A_{\lambda}$ differently in Figure \ref{A-lambda}.\\
Note that in all the following figures we have chosen $f(t)>t$.

\begin{figure}[ht!]
\includegraphics[width=14cm]{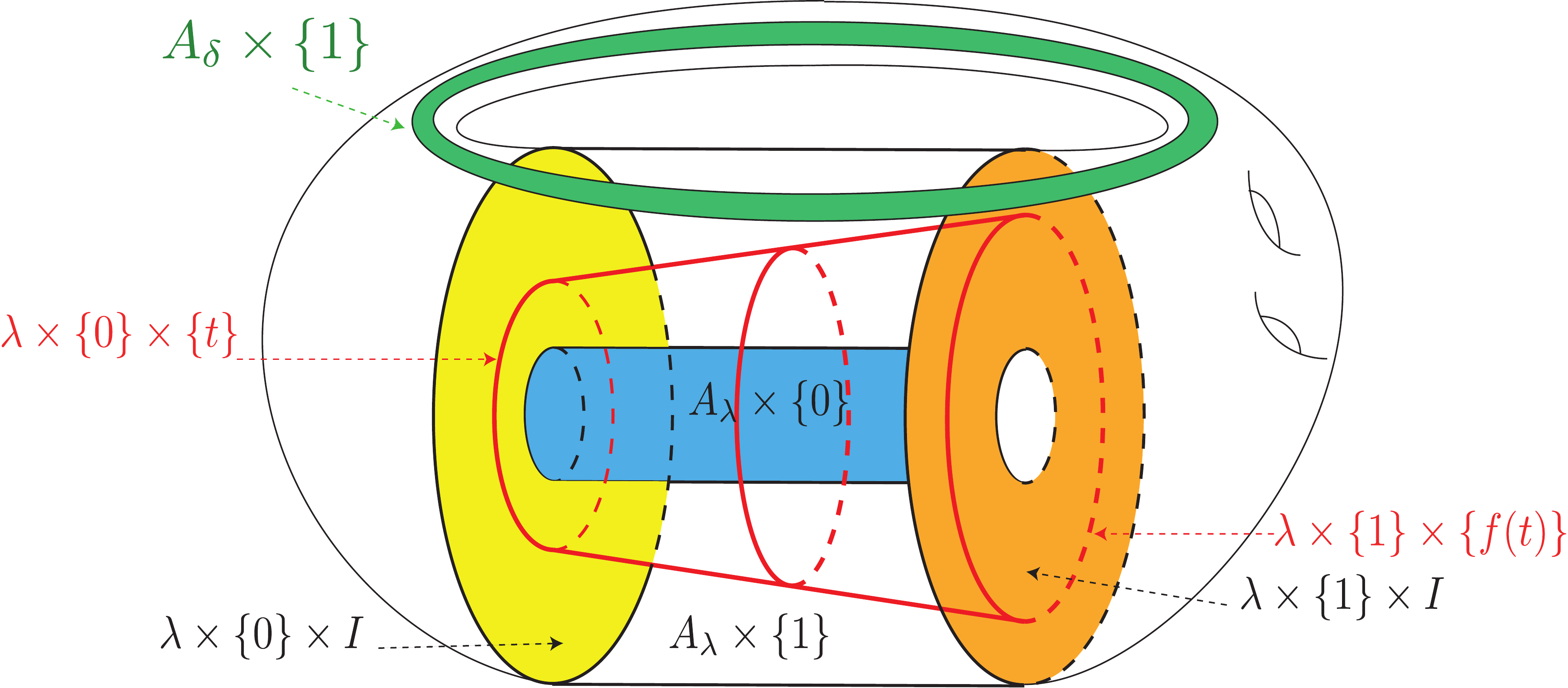}
\caption{Suspension foliation along $A_{\lambda}\t I$ in $S_{g}\t I$}\label{A-lambda}
\end{figure}

Note that for all $t\in \mathring{I}$, this extension adds to $Q_{t}$ two annuli : one is joining $Q_{t}$ to $Q_{f(t)} $ and the other annulus is joining $Q_{t}$ to $Q_{f^{-1}(t)}$.\\
Of course this extension adds an annulus to $Q_{t}$, for $t\in \{0,1\}$, which induces that $S_{g}\t \{0\}$ and $S_{g}\t \{1\}$ are leaves of $\F_{f}$, and the other leaves tends toward those two.\\

To represent $\F_f$, we can draw a transverse cut of this foliation, i.e along $\delta\t \{0\}\t I$, as in Figure \ref{transverse-A-delta}. Note that here, $\delta\t \{0\}\t I$ plays the role of $A_{\pi}$ of Figure \ref{susp}, see also Figure \ref{A-delta}.

\begin{figure}[ht!]
\includegraphics[width=10cm]{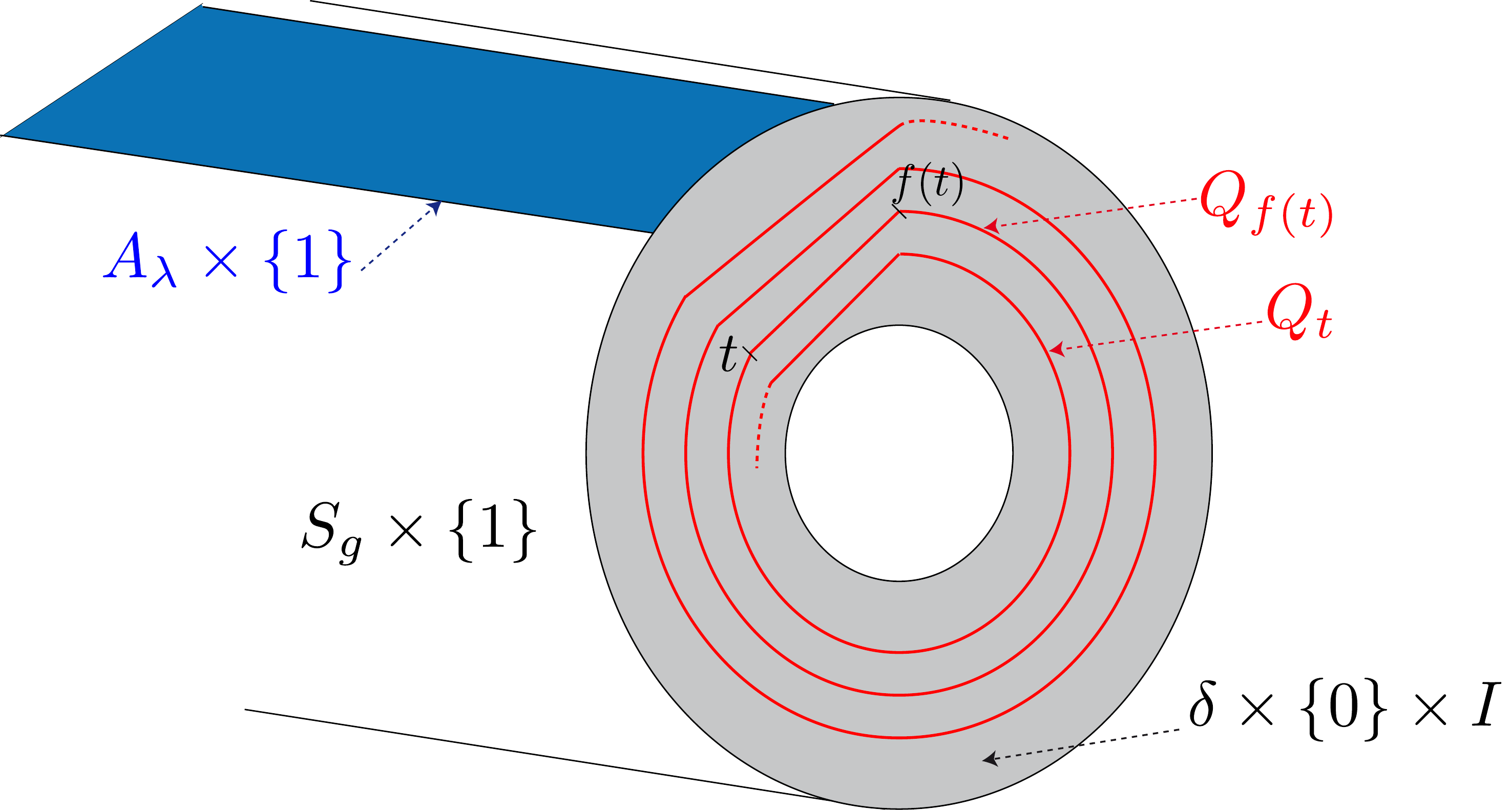}
\caption{Transverse cut along $\delta\t \{0\}\t I$}\label{transverse-A-delta}
\end{figure}
 
\begin{figure}[ht!]
\includegraphics[width=14cm]{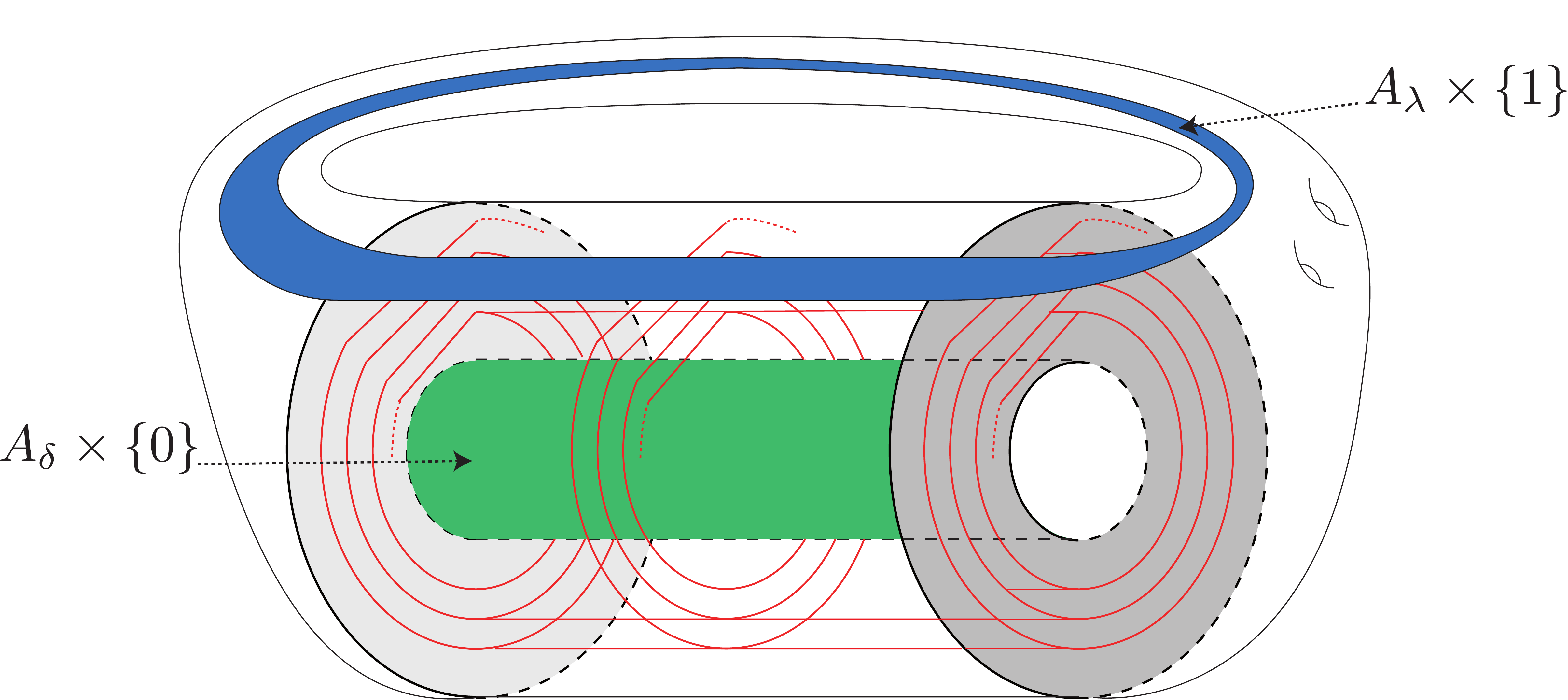}
\caption{Foliation $\F_f$ along $A_{\delta}$}\label{A-delta}
\end{figure}

\subsection{Step 3}

The goal of this step is to paste together $(S_{g}\bsl A_{\delta} )\t I_{0}$ and $(S_{g}\bsl A_{\delta} )\t I_{1}$, and to extend it nicely; where $I_{0}=[0,1/2]$, and $I_{1}=[1/2,3/4]$.\\

Consider $(S_{g}\bsl A_{\delta} )\t I_{0}$ and $(S_{g}\bsl A_{\delta} )\t I_{1}$ both with the foliation $\F_f$ of Figure \ref{A-delta}.\\
Glue them along $(S_{g}\bsl A_{\delta})\t \{1/2\}$ to obtain a manifold homeomorphic to $(S_{g}\bsl A_{\delta} )\t [0,3/4]$.\\
Now we do the following extension (see Figure \ref{super}).

\begin{figure}[ht!]
\includegraphics[width=12cm]{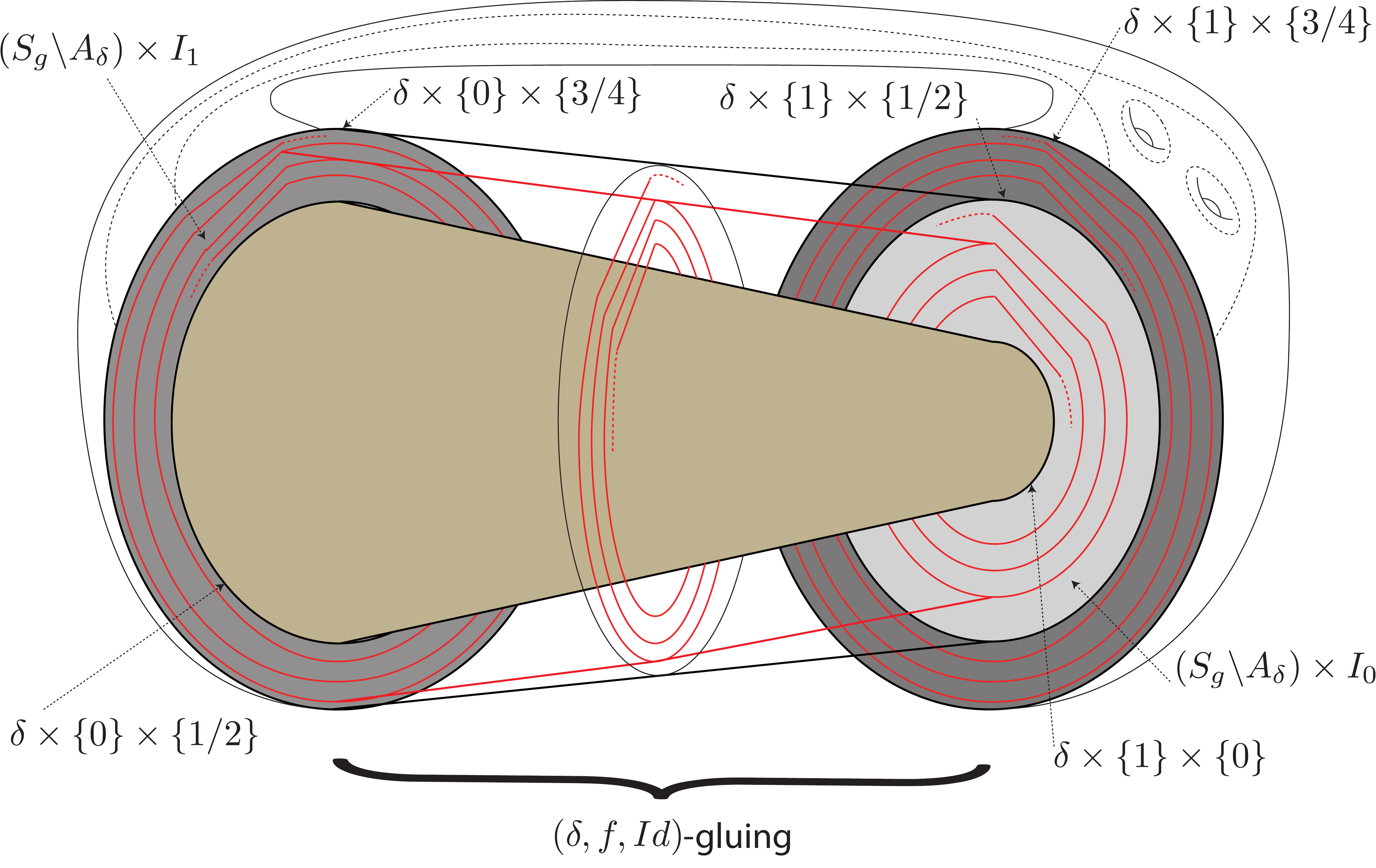}
\caption{($\delta,f,Id)$-gluing between $(S_{g}\bsl A_{\delta})\t I_{0}$ and $(S_{g}\bsl A_{\delta})\t I_{1}$}\label{super}
\end{figure}

We identify $\delta\t \{0\}\t I_{1}$ to $\delta\t \{1\}\t I_{0}$ by a foliation preserving homeomorphism $h$ (i.e sending an interior leaf on an interior leaf, and sending $\delta\t \{0\}\t \{3/4\}$ on $\delta\t \{1\}\t \{1/2\}$, and $\delta\t \{0\}\t \{1/2\}$ on $\delta\t \{1\}\t \{0\}$, for example any rotation).\\
In particular, this amounts to glue one annulus called $A_{\delta}^{3/4}$ between $\delta\t \{0\}\t \{3/4\}$ and $\delta\t \{1\}\t \{1/2\}$; and an other annulus $A_{\delta}^{1/2}$ between $\delta\t \{0\}\t \{1/2\}$ and $\delta\t \{1\}\t \{0\}$, which connects the compact leaves $(S_{g}\bsl A_{\delta})\t \{0\}$, $(S_{g}\bsl A_{\delta})\t \{1/2\}$ and $(S_{g}\bsl A_{\delta})\t \{3/4\}$.\\
There may exist annuli connecting the circles; which correspond to possible interior fixed points of $f$.\\
Moreover, this amounts to glue bands $\R\t I$ where one boundary spiral $\R\t \{0\}$ lies on $\delta\t \{0\}\t I_{1}$ and the other boundary spiral $\R\t \{1\}$ lies on $\delta\t \{1\}\t I_{0}$, so that the foliation matches.\\

We denote this extension by a \textbf{\textit{$(\delta,f,h)$-gluing}} between $(S_{g}\bsl A_{\delta})\t I_{0}$ and $(S_{g}\bsl A_{\delta})\t I_{1}$, and the foliation is called $\F(f,I_{0},I_{1},h)$.\\

Note that we can make another choice to make this extension. Indeed, we can also identify $\delta\t \{0\}\t I_{0}$ and $\delta\t \{1\}\t I_{1}$ by a foliation preserving homeomorphism $h$ similarly. That gives another direction of rotation along the boundary leaf.\\
If we do the first choice we call it a \textbf{\textit{clockwise $(\delta,f,h)$-gluing}} otherwise if we make the second choice we call it a \textbf{\textit{anti-clockwise $(\delta,f,h)$-gluing}}.
But for more simplicity when the direction of rotation does not matter we will just say a \textit{$(\delta,f,h)$-gluing}.

The boundary of the resulting manifold has two connected components as shown in Figure \ref{super-bord} :
\begin{itemize}
 \item $S_{g}^{0}=\delta\t \{0\}\t [0,1/2]\cup A_{\delta}^{1/2}\cup (S_{g}\bsl A_{\delta})\t \{0\}$; where $\delta\t \{0\}\t [0,1/2]$ is transverse and $A_{\delta}^{1/2}\cup (S_{g}\bsl A_{\delta})\t \{0\}$ is tangent to the foliation  $\F(f,I_{0},I_{1},h)$.
  \item $S_{g}^{3/4}=\delta\t \{1\}\t [1/2,3/4]\cup A_{\delta}^{3/4}\cup (S_{g}\bsl A_{\delta})\t \{3/4\}$; where $\delta\t \{1\}\t [1/2,3/4]$ is transverse and $A_{\delta}^{1/2}\cup (S_{g}\bsl A_{\delta})\t \{3/4\}$ is tangent to the foliation  $\F(f,I_{0},I_{1},h)$.

\end{itemize}

\begin{figure}[ht!]
\includegraphics[width=16cm]{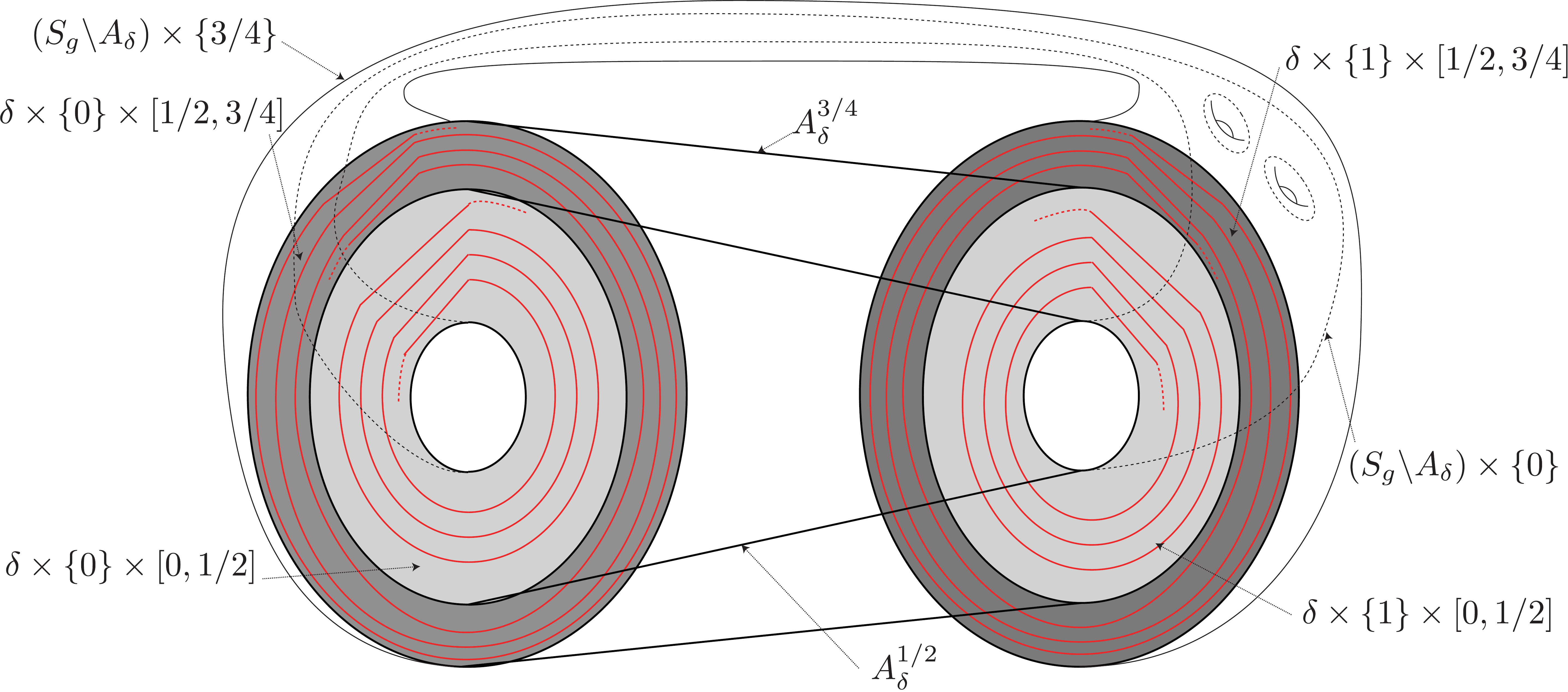}
\caption{Boundary of the foliation $\F(f,I_{0},I_{1},Id)$}\label{super-bord}
\end{figure}

\subsection{Step 4}

The aim of this step is to repeat infinitely many times Step $3$, in order to obtain a foliation of $S_{g}\t I$ where $S_{g}\t \{1\}$ is a leaf and $S_{g}\t \{0\}$ is foliated as $S_{g}^{0}=\delta\t \{0\}\t [0,1/2]\cup A_{\delta}^{1/2}\cup (S_{g}\bsl A_{\delta})\t \{0\}$ defined in Step~$3$.\\

Some notations :\\
Set $i_{0}=0$.\\
Let $n\in \N ^{*}$, we set :
$$\dps i_{n}= \sum _{k=1}^{n} \frac{1}{2^{k}}$$
and $I_{n}= [i_{n}, i_{n+1}]$, for all $n\in \N $.\\
Note that $\dps \lim_{n\rightarrow +\infty} \sum _{k=1}^{n} \frac{1}{2^{k}}=1$; hence $\dps \overline{\bigcup_{n\in\N}I_{n}}= I$.\\
\begin{figure}[ht!]
\includegraphics[width=8cm]{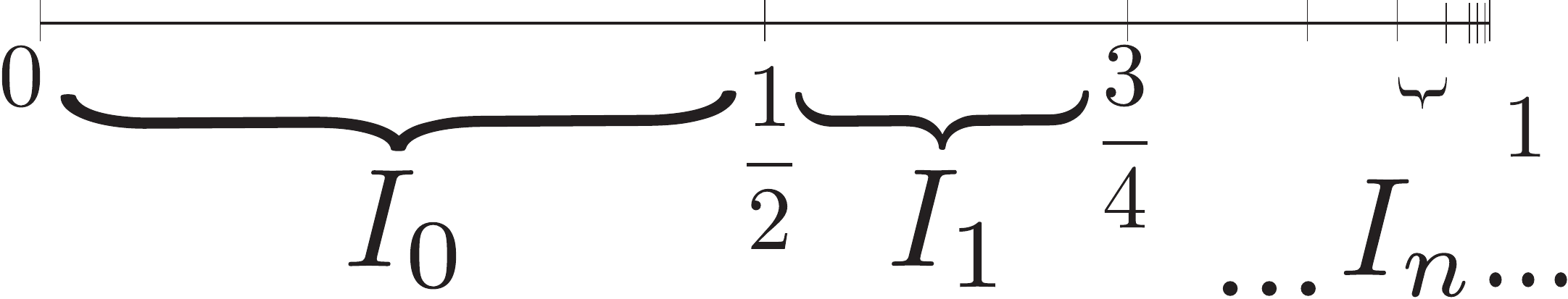}
\caption{Intervals $I_{n}$}\label{interval}
\end{figure}

For all $n\in \N$, consider $(S_{g}\bsl A_{\delta}) \t I_{n}$ with the foliation $\F_f$ defined in Step $2$.\\

Let $h_{n}, n\in \N$ be foliation preserving homeomorphisms between $\delta\t \{0\}\t I_{n+1}$ and $\delta\t \{1\}\t I_{n}$ sending an interior leaf on an interior leaf, and sending $\delta\t \{0\}\t \{i_{n+1}\}$ on $\delta\t \{1\}\t \{i_{n}\}$, and $\delta\t \{0\}\t \{1/2\}$ on $\delta\t \{1\}\t \{0\}$.\\

For each $n\in \N$, apply clockwise $(\delta , f, h_{n})$-gluing (defined in Step $3$) between $(S_{g}\bsl A_{\delta})\t I_{n}$ and $(S_{g}\bsl A_{\delta})\t I_{n+1}$, constructing the foliation $\F(f,I_{n},I_{n+1},h_{n})$, and consider the closure of this manifold, to obtain $S_{g}\t I$ with the clockwise foliation $$\dps \F(f,h_{n},n\in\N)=\overline{\bigcup_{n\in\N} \F(f,I_{n},I_{n+1},h_{n})}$$\\
Since $\dps \overline{\bigcup_{n\in\N}I_{n}}= I$, the homeomorphisms $h_{n},n\in\N$ can be considered as a single homeomorphism $h$ of $I$; so for more simplicity we denote this foliation by $\F(f,h)$ which will be called the clockwise foliation $\F(f,h)$.\\

We similarly define a foliation by considering only anti-clockwise $(\delta , f, h_{n})$-gluing for all $n\in \N$ to obtain a anti-clockwise foliation $\F(f,h)$.\\

This amounts to consider $\dps (S_{g}\bsl A_{\delta}) \t \overline{\bigcup_{n\in\N}I_{n}}$, and to extend the foliation by pasting annuli called $A_{\delta}^{i_{n}}$ for $n\in \N ^{*}$ between $\delta\t \{0\}\t \{i_{n}\}$ and $\delta\t \{1\}\t \{i_{n-1}\}$, and bands $\R\t I$ between the spirals $\R\t \{0\}$ on $\delta\t \{0\}\t \{i_{n}\}$ and the spirals $\R\t \{1\}$ on $\delta\t \{1\}\t \{i_{n-1}\}$ with respect to $h_{n}$.\\

When $n$ tends towards the infinity, $ i_{n}$ tends toward $1$, so we attach an annulus $A_{\delta}^{1}$ between $\delta\t \{0\}\t \{1\}$ and $\delta\t \{1\}\t \{1\}$, hence $S_{g}\t \{1\}$ is a leaf.\\
Moreover, as in Step $3$, $S_{g}^{0}=\delta\t \{0\}\t I_{0}\cup A_{\delta}^{1/2}\cup (S_{g}\bsl A_{\delta})\t \{0\}$ is the second boundary component homeomorphic to $S_{g}\t \{0\}$, where $\delta\t \{0\}\t I_{0}$ is transverse and $A_{\delta}^{1/2}\cup (S_{g}\bsl A_{\delta})\t \{0\}$ is tangent to the foliation  $\F(f,h)$.

Note that the leaf starting in $\delta\t \{0\}\t \{0\}$ is homeomorphic to the half-infinite ladder as shown in Figure \ref{ladder}.
\begin{figure}[ht!]
\includegraphics[width=10cm]{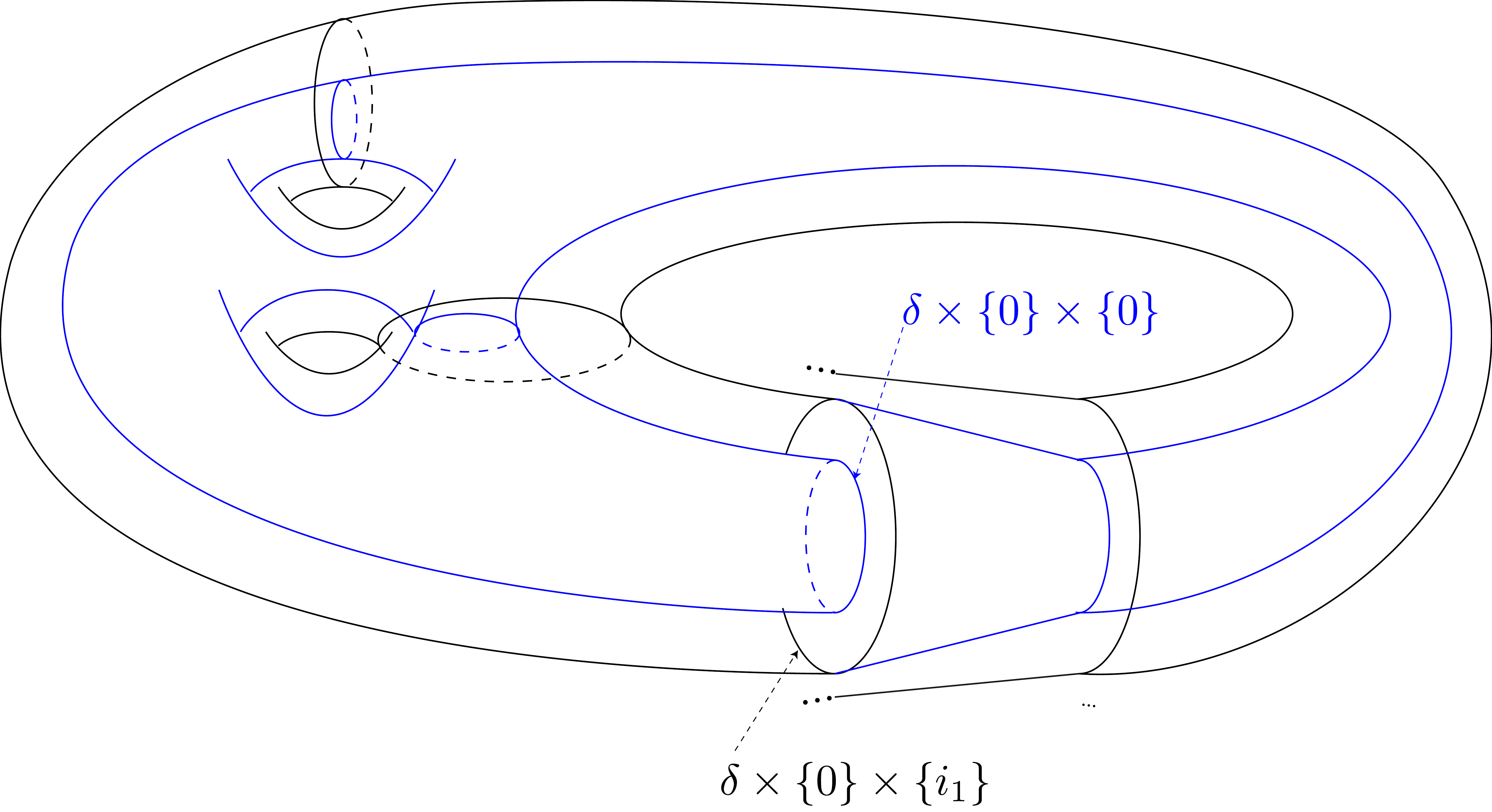}
\caption{Interior leaf starting in $\delta\t \{0\}\t \{0\}$}\label{ladder}
\end{figure}

\begin{RK}\label{infini}
Note that the induced foliation on the transverse annulus $ \dps X=\delta\times \{0\}\times \overline{\bigcup_{n\in\N}I_{n}}$ has an infinite number of circle leaves (which are $\dps\bigcup_{n\in\N} \delta\times \{0\}\times \{i_{n}\}$). Between two consecutive such circle leaves there is the suspension foliation induced by $f$.\\
\end{RK}

Now we can define Spiraling.
\begin{DEF}
That construction of $\mathcal{F}(f,h)$ is called \textbf{\emph{spiraling}}.\\
We say that the nearby leaves of $S_{g}\times\{1\}$ in the clockwise (respectively in the anti-clockwise) foliation $\mathcal{F}(f,h)$ are \textbf{\emph{clockwise spiraling}} (respectively \textbf{\emph{ anti-clockwise spiraling}}) along $S_{g}\times\{1\}$. \\
We will denote the component $S_{g}\t I$ with the foliation $\mathcal{F}(f,h)$ by $\mathcal{S}_g(f,h)$, or when there is no ambiguity $\mathcal{S}_g$, with possibly adding the direction of rotation (clocwise or anti-clockwise). \\
This foliation is of course transversely orientable; let $\mathcal{S}_g^+$ (respectively $\mathcal{S}_g^-$) be the foliation $\mathcal{F}(f,h)$ where the transverse orientation on the closed compact boundary leaf $S_{g}\times \{1\}$, points out (respectively into) $ S_{g} \t I$. 
\end{DEF}

\begin{RK}
If $f=Id$ there is no spiral leaves, there are only half-infinite cylinders ($g=1$), or half-infinite ladders ($g>1$), and the foliation does not depend on $h$. In this case we will denote this foliation by $\mathcal{S}_g(Id)$.
\end{RK}

\subsection{Attaching components of spiraling}\label{attach}

Consider a $3$-manifold $M$ with a foliation $\F$, admitting a boundary component homeomorphic to $S_{g}$. Assume that $\F_{|S_{g}}$ has circle and spiral leaves contained in an annulus $A$ with circle boundary leaves, and that $\F_{|S_{g}\bsl A}$ is a leaf.\\
We want to extend $\F$  in a neighborhood $S_{g}\t I$ (where $S_{g}=S_{g}\t \{0\}$), so that $S_{g}\t \{1\}$ is a leaf.\\

By the above construction it suffices to choose $f$ such that the foliation on $\delta\t \{0\}\t [0,1/2]$ and the foliation on $A$ are homeomorphic, and glue $\delta\t \{0\}\t [0,1/2]$ on $A$, and $S_{g}\bsl A$ on $(S_{g}\bsl A_{\delta})\t \{0\} \cup A_{\delta}^{1/2}$ defined in Step~$3$, (see Figure \ref{super-bord}).\\
It remains to make the good choice of $f$.\\

Recall that $A$ is foliated by spirals and circles, and denote by $\G$ its foliation. \\
$\mathcal{G}$ is a foliation of an annulus with circles boundary leaves, because $S_{g}\backslash A$ is a leaf.\\
Hence $\G$ is isotopic to a union of the annuli of Figure \ref{anneau}.

 \begin{figure*}[htb!]
\begin{minipage}[b]{0.33\linewidth}
\centering
\centerline{\includegraphics[width=4.5cm]{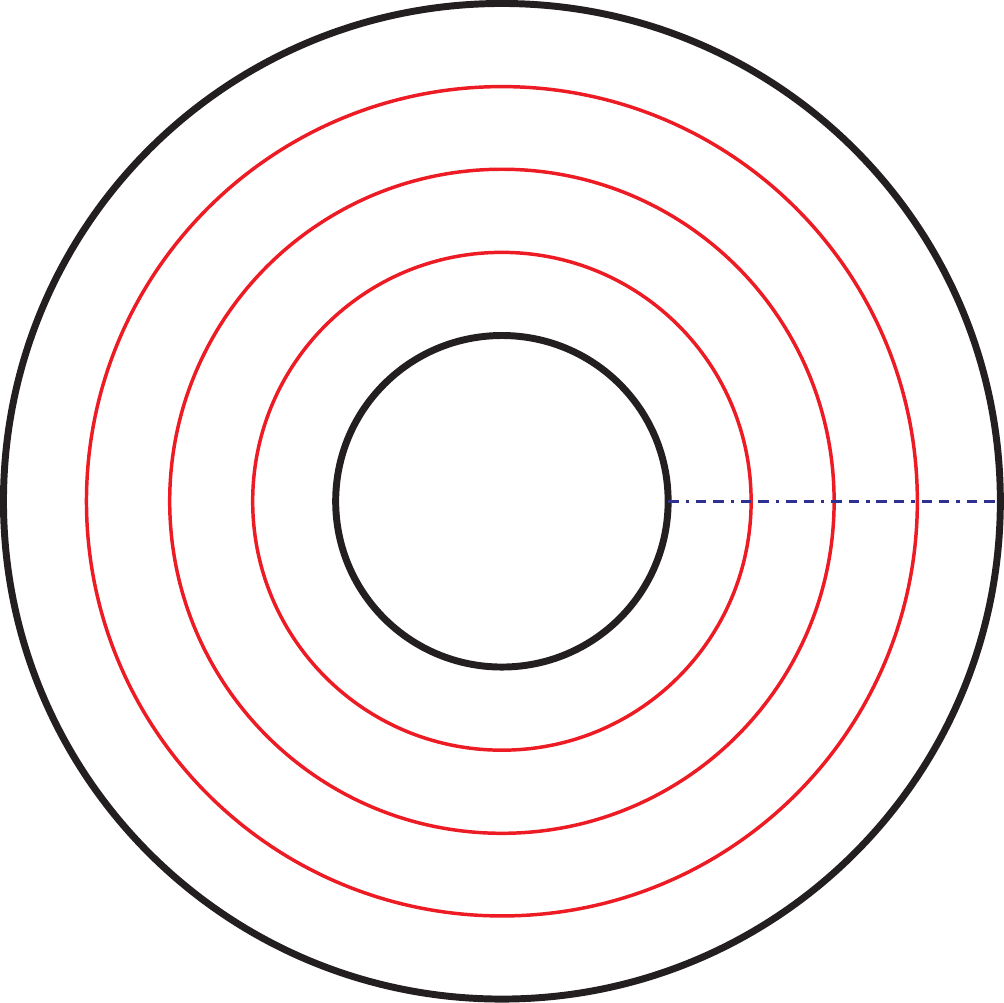}}
\centerline{\footnotesize{(a) Circle foliation}}
\end{minipage}
\hfill
\begin{minipage}[b]{0.33\linewidth}
\centering
\centerline{\includegraphics[width=4.5cm]{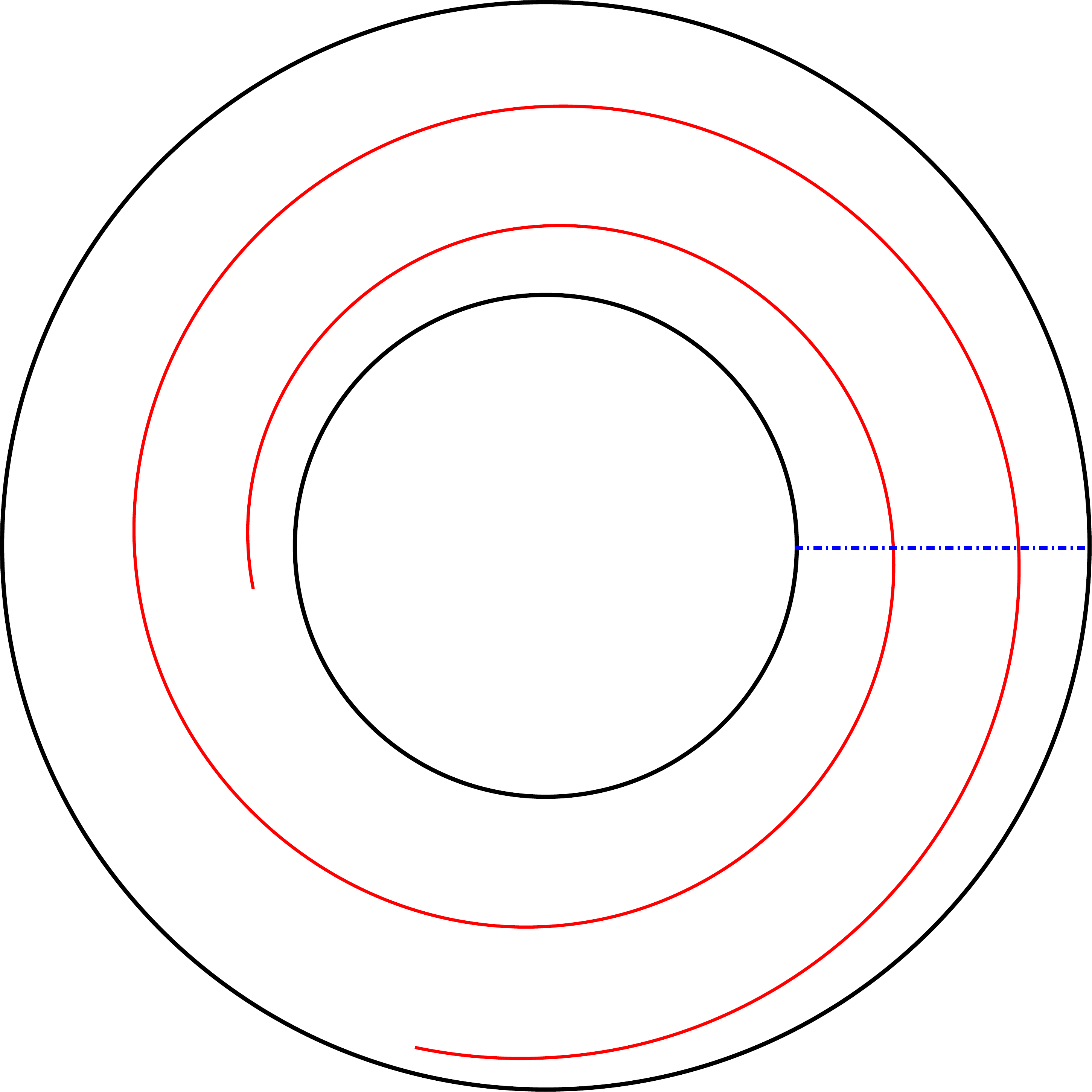}}
\centerline{\footnotesize{(b) Spiral foliation}}
\end{minipage}
\hfill
\begin{minipage}[b]{0.33\linewidth}
\centering
\centerline{\includegraphics[width=4.5cm]{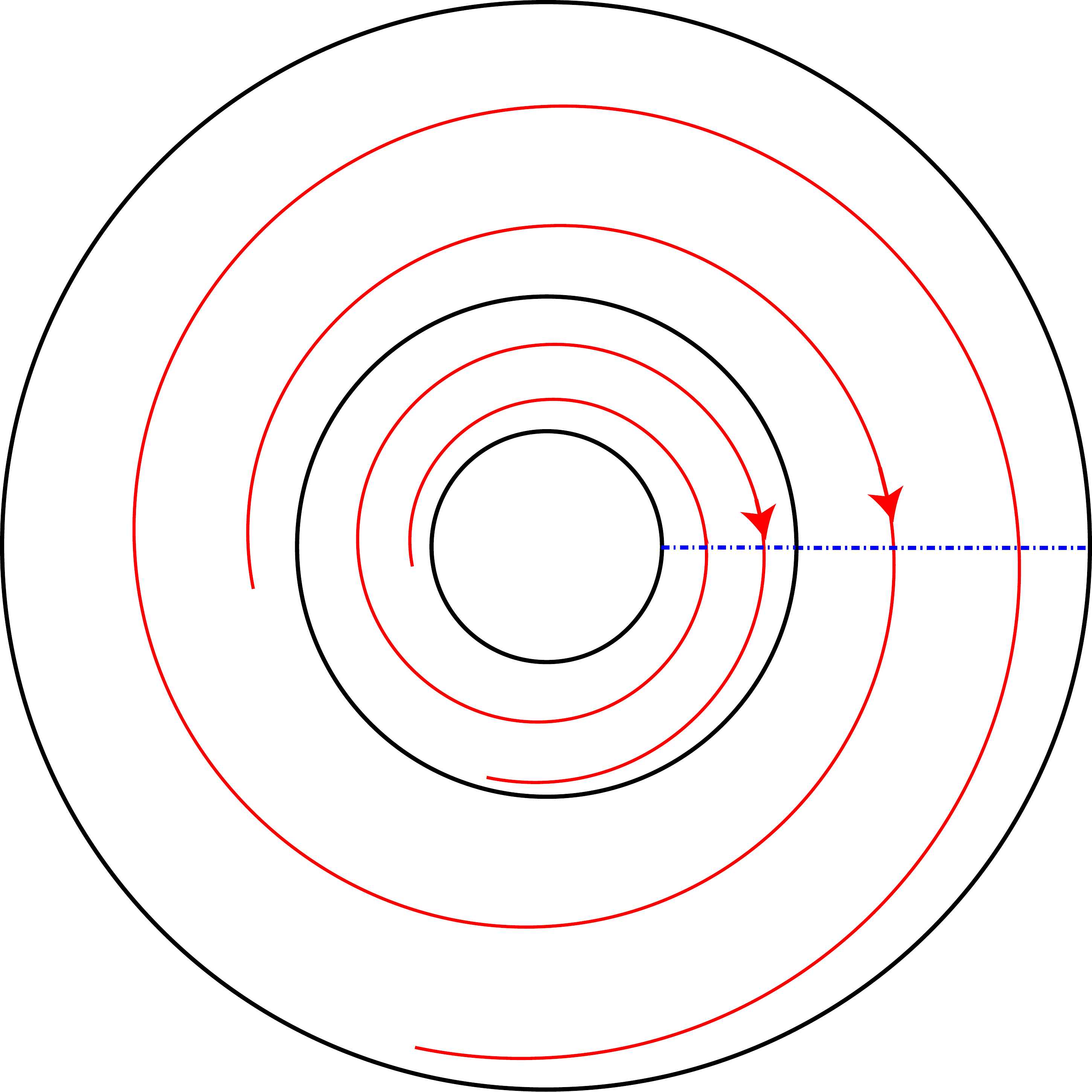}}
\centerline{\footnotesize{(c) Other possibility}}
\end{minipage}
\hfill
\begin{minipage}[b]{0.33\linewidth}
\centering
\centerline{\includegraphics[width=4.5cm]{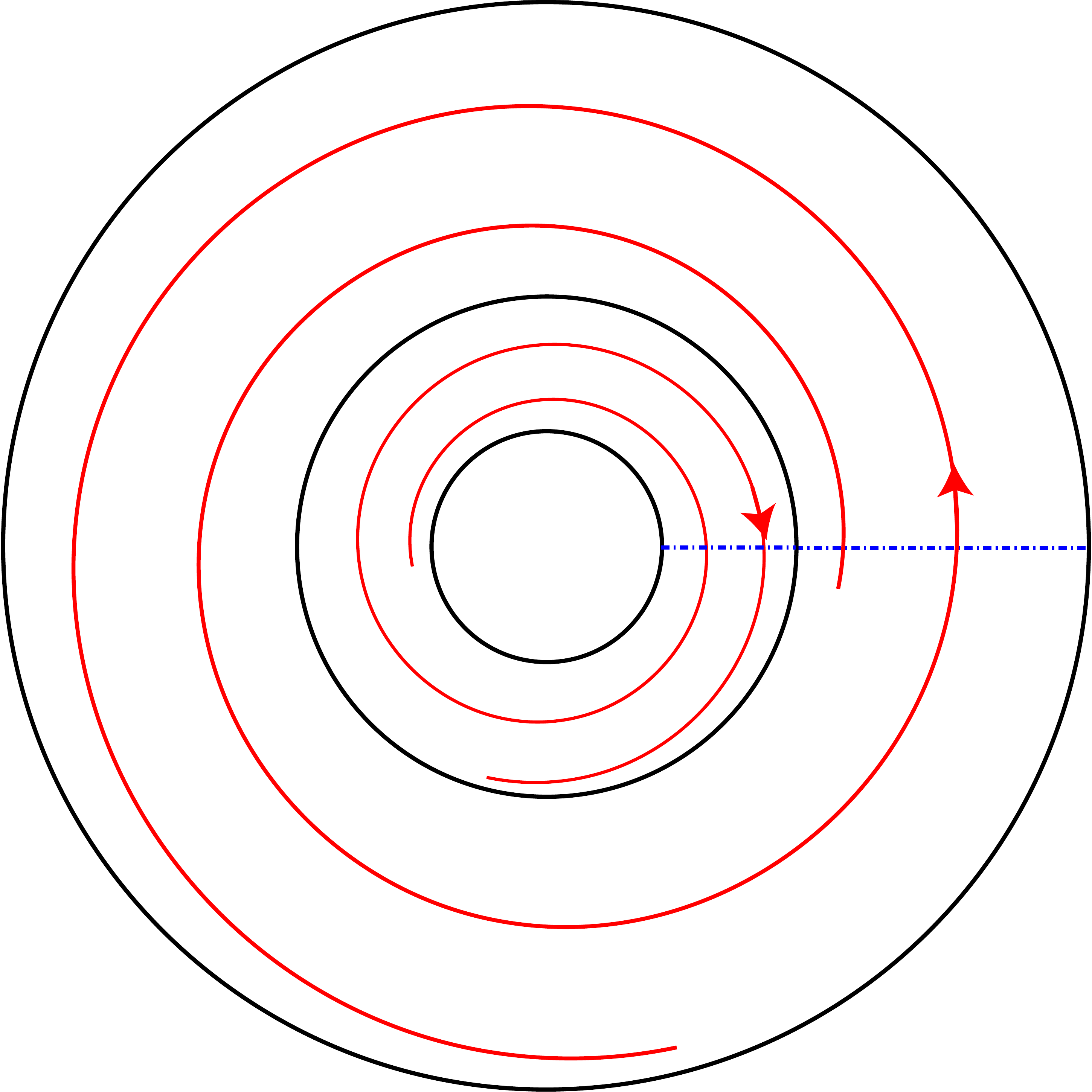}}
\centerline{\footnotesize{(c) Other possibility}}
\end{minipage}
\caption{ Taut foliations of the annulus with boundary leaves.}
\label{anneau}
\end{figure*}

Considering that $A=\S^{1}\t I$, denote by $\tau=\{*\}\t I$ a transverse arc parametrized by $[0,1]$. \\
Obviously, if $F$ is a compact leaf of $\G$ (hence a circle) then $F\cap \tau=\{*\}\t \{x\}$, for a single $x\in I$, and if $F$ is non-compact, there are infinitely many such $x$.\\
Let us call $X=\{ x\in I /\exists F\in \G, F\cap \tau = \{*\}\t \{x\}\}$. By definition of $\G$, $\{0,1\}\subset X$.\\
Note that possibly there exists $0\leq a< b\leq 1$ such that $[a,b]\subset X$.\\
 If $X=I$, $\mathcal{G}$ has trivial holonomy, i.e. $\G$ is a circle foliation.\\
 
Let $f$ be a smooth, increasing map from $I$ to $I$, such that $\restriction{f}{X}$ is the identity on $X$; and assume that $f$ is strictly increasing out of $X$, and that $f$ gives rise to the good direction of rotation. That is to say that for each spiral $F$ of $\G$, there exists $x\in F\cap\tau\cap ( I\bsl X)$, and we denote by $I_{F}$ the connected component of $x\in I\bsl X$. If $F$ is a clockwise spiral, we set $f(t)>t, t\in I_{F}$, otherwise ($F$ is anti-clockwise) we set $f(t)<t,t\in I_{F}$.\\
 Up to isotopy, f is the holonomy map of $\G$, but we will not use holonomy here.
 
By constructing the suspension foliation along $\lambda$ (Step $2$), we create spiral and circle leaves on $\delta\t \{0\}\t [0,1/2]$. More precisely, we create circle leaves when $f$ is the identity i.e when there are circles on $\G$, and spiral leaves out of $X$, with the corresponding direction of rotation; which is exactly the expected foliation (up to isotopy).

\begin{RK}\label{or-spir}
Note that given a transverse orientation on the tangent part $S_{g}\backslash A$, say outward, (respectively inward), spiraling amounts to glue on $S_{g}$ a component $\mathcal{S}_g^+$ (respectively $\mathcal{S}_g^-$).
\end{RK}

%
%\begin{figure}[htb!]
%\includegraphics[width=14cm]{}
%\caption{Local construction of $P$}\label{P}
%\end{figure}

%\begin{figure}[ht!]
%\includegraphics[width=7cm]{}
%\caption{$T$ immersed in $\mathbb{R}^3$}\label{T-A}
%\end{figure}
%
%The induced foliation by $\mathcal{F}^{**}$ on $B$ is by construction, homeomorphic to $\mathcal{G}$.\\
% Indeed, we have identified $\lambda\times \{0\}\times \{t\}\subset \lambda\times I\times J$ to $\lambda\times \{0\}\times \{[t]+f(t-[t])\}$ (see the construction of $\mathcal{F}_1'$). By construction of $f$, when $t\in\cup I_{n}$, the leaves are circles, and by definition, this is exactly when there are circle leaves on  $\mathcal{G}$, and similarly for spiral leaves, so up to isotopying $f$, we can supose that the foliation on $\delta\times \{0\}\times [0,1]$, and the foliation $\mathcal{G}$ can be glued together.
%
%
%
%Note that $\mathcal{F}^\bigstar$ is Reebless because $M\not\cong\mathbb{D}^2\times\mathbb{S}^1$; and $\mathcal{F}^\bigstar$ is transversely orientable, because trivially, $\mathcal{F}^{**}$ and $\mathcal{F}$ are so.
%
%
%Denote  $\mathcal{G}$, the dimension $1$ foliation on $A$ induced by $\mathcal{F}$ on $A$.\\
%
% It does not contain any Reeb annulus because we have suppose the foliation $\mathcal{F}$ to be taut. Hence, $\mathcal{G}$ is isotopic to a union of the following foliations of Figure \ref{anneau} (see a proof of this in \cite{HH}, part 4.2)~:\\
%

\subsection{Reeb annulus spiraling}\label{gen-spi}

 When $g=1$, we can also define spiraling for non-taut foliations, i.e when the induced foliation $\mathcal{G}$ on $A$ admits Reeb annuli, as soon as it has boundary circle leaves. \\

 We keep the previous notations of Step $4$, and recall that $\dps \overline{\bigcup_{n\in\N}I_{n}}= I$.\\
 Indeed, here $S_{1}$ is a $2$-torus that we denote $T$, hence $T\bsl A$ is an annulus so we have the following representation~:\\
 
$I\times \mathbb{S}^1 \t I \cong\{ (x,\theta,t), x \in I, \theta \in ]-\pi,\pi], t\in I \}$, and denote for $t\in I$ $\delta\times \{0\}\times \{t\}=\{ (0,\theta,t), \theta \in ]-\pi,\pi] \}$.\\

Now foliate each annuli $\delta\times \{0\}\times I_{n}, n\in \mathbb{N}$, and $\delta\times \{1\}\times I_{n}, n\in \mathbb{N}$, by a foliation isotopic to $\mathcal{G}$ (the foliation is $\G$ up to dilatation).\\
Consider $T\times I \cong ( I\times \mathbb{S}^1\t I)/\!\raisebox{-.65ex}{\ensuremath{\sim }}$\\
where $(0,(\theta,t)) \sim  (1, h_{n}(\theta,t))$, for given foliation preserving homeomorphisms $h_{n}$ sending $\delta\times \{0\}\times I_{n}$, on $\delta\times \{1\}\times I_{n}$, for each $n\in \mathbb{N}$, depending on the integer $n$ such that $t\in I_{n}$. As above all the homeomorphisms $h_{n}$ can be seen as a single homeomorphism from  $\delta\times \{0\}\times I$, on $\delta\times \{1\}\times I$ since $\dps \overline{\bigcup_{n\in\N}I_{n}}= I$.\\
Denote for each $t\in I$ and $x \in I$, $\delta_{x}^{t}=\{ (x,\theta, t), \theta\in ]-\pi,\pi]\}\cong  \mathbb{S}^1$.

With those coordinate, we assume $A\cong \delta\times \{0\}\t I_{0}=\{(0,\theta,t), \theta\in ]-\pi,\pi], t\in I_{0}\}\subset T\times I$; and $T\backslash A\cong \{z \in\delta_{x}^{\frac{1}{2}(1-x)}, x\in [0,1]\}$;  (see Figure \ref{spi-gen}).

In $\delta\times I \t I$ consider for all $n\in\N^*$, the annulus leaves denoted $A^{i_{n}}$ connecting $\delta\times \{0\}\times i_{n}$ to $\delta\times \{1\}\times i_{n-1}$, i.e $\dps A^{i_{n}}= \bigcup _{x\in I} \delta_{x}^{-\frac{1}{2^{n}}x+i_{n }}$ (note that $A^{1/2}= T\bsl A$).\\
Foliate each solid cylinder bounded by $\delta\times \{0\}\times I_{n+1}\cup A^{i_{n+1}}\cup \delta\times \{1\}\times I_{n}\cup A^{i_{n}}$, $n\in \mathbb{N}$ by a foliation isotopic to $\G \t I$ with respect to the foliation set on the annuli $\delta\times \{0\}\times I_{n}, n\in \mathbb{N}$, and $\delta\times \{1\}\times I_{n}, n\in \mathbb{N}$.

\begin{RK}
This choice of foliated solid cylinder induces a clockwise foliation while the other choice (joining $\delta\times \{0\}\times I_{n}$ to $\delta\times \{1\}\times I_{n+1}$, $n\in \mathbb{N}$) induces a anti-clockwise foliation.
\end{RK}

When $n$ tends towards the infinity we add the torus leaf $T\t \{1\}$, because $A^{1}$ connects $\delta\times \{0\}\times 1$ to $\delta\times \{1\}\times 1$ since $\dps\lim_{n\rightarrow+\infty} i_{n} =1$.

\begin{DEF}
We call that component \textbf{\emph{generalized spiraling component}}, denoted $\mathcal{S}_{*}(\G,h)$, or $\mathcal{S}_{*}$ when there is no ambiguity.
\end{DEF}

Note that since the identification is by a foliation preserving homeomorphism; if $\G$ has no Reeb annuli,  $\mathcal{S}_{*}(\G,h)=\Ss_{1}(f,h)$, where $f$ is a suspension homeomorphism defining $\G$.\\

The two boundary components are :\\
 $T\backslash A \cup A $, where $A$ is transverse and $T\bsl A$ is tangent to the foliation.\\
 The torus leaf $T\times \{1\}$.\\

Let us describe the induced foliation by $\mathcal{S}_{*}(\G,h)$ on the properly embedded transverse annulus $X=\delta\times \{0\}\times I$.\\
$\partial X$ is included in leaves, i.e that foliation admits circle boundary leaves.\\
It has infinitely many circles leaves in its interior. Indeed, the leaf of $\delta\times \{0\}\times \{0\}$ is an half infinite cylinder, and its intersects $X$ in each $\delta\times \{0\}\times \{i_{n}\}, n\in \mathbb{N}$.\\
The induced foliation on each annulus $\delta\times \{0\}\times I_{n}, n\in \mathbb{N}$ (included in $X$), is $\mathcal{G}$; hence we have the following Remark.\\

 \begin{RK}
 The previous construction of $\Ss _{*}(\G,h)$ when $\G$ admits at least one Reeb annulus contradicts part (i) of Theorem $4.2.15$ of \cite {HH}, which says that a foliation of an annulus tangent to the boundary can only admits finitely many Reeb components.\\
\end{RK}

 \begin{RK}\label{tore-trans-spir}
 The induced foliation by $\Ss _{*}(\G,h)$ on $\dps T^{1/2}= \bigcup _{x\in I} \delta _{x}^{1/2}$ is the annulus foliation $\G$ whose boundary leaves are identified.
 \end{RK}

Note that by considering the induced foliation by $\mathcal{S}_{*}(\G,h)$ on the manifold homeomorphic to $T\t I$ bounded by $T\t \{1\}$ and $T^{1/2}$, we see that spiraling extend a given foliation on a $2$-torus with at least a circle leaf (possibly with spirals and Reeb annuli) to a foliation of $T\t I$ where $T\t \{1\}$ is a torus; which was our first goal.

\begin{figure}[htb!]
\includegraphics[width=9cm]{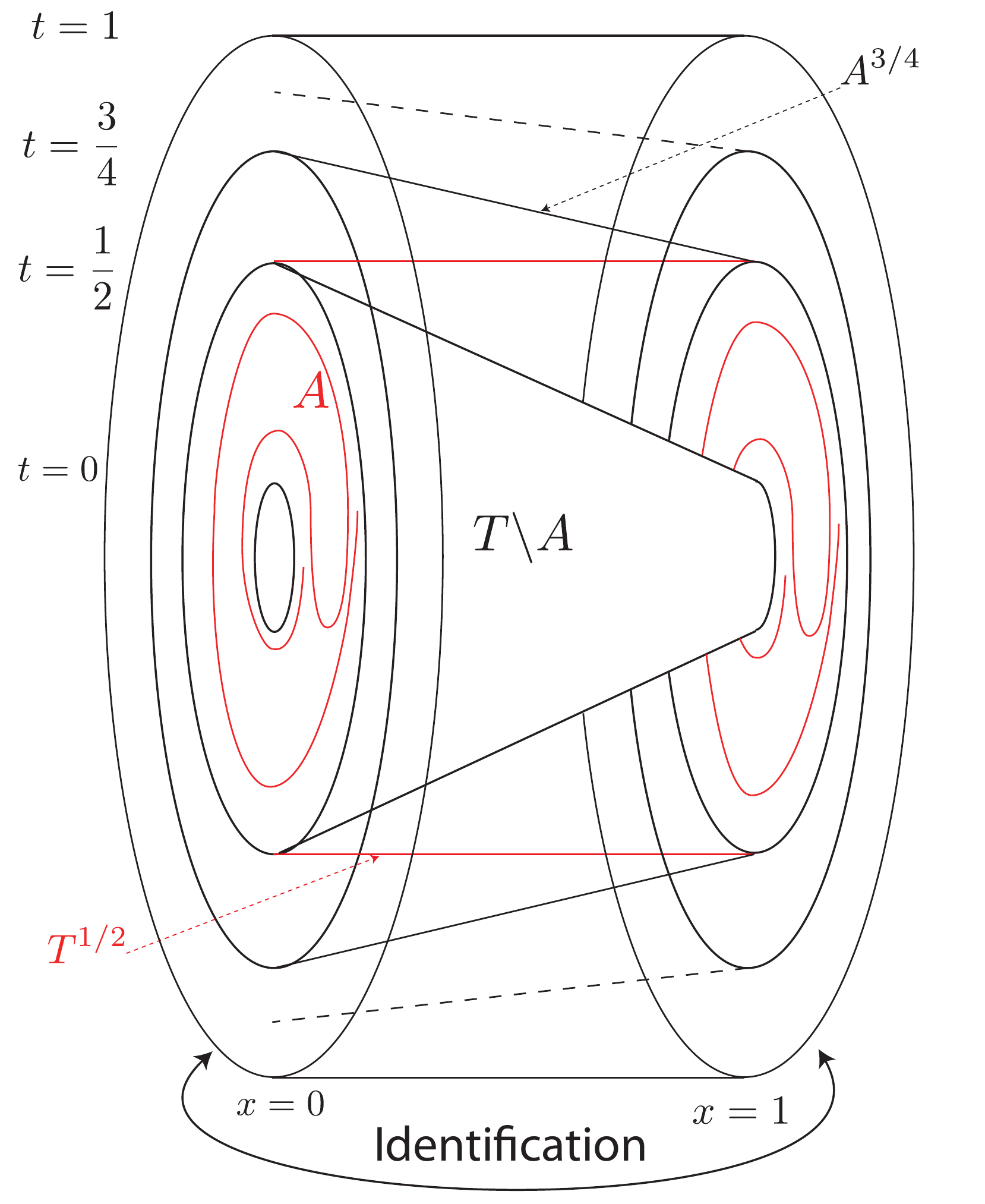}
\caption{Generalized spiraling : $\Ss_{*}$}\label{spi-gen}
\end{figure}

\section{Foliations near torus leaves}\label{torus}
We first prove the equivalence between trivial spiraling and turbulization (Lemma \ref{eq-turb-spir}).\\
Then we prove Proposition \ref {torus-leaf}.

\subsection{Equivalence between trivial spiraling and turbulization}
 \begin{LM}\label{eq-turb-spir}
$\mathcal{S}_1(Id)$ is isotopic to $\mathcal{T}$.
\end{LM}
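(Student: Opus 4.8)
The plan is to produce an explicit foliation-preserving homeomorphism between the two foliated copies of $T^{2}\times I$. Both $\mathcal{S}_{1}(Id)$ and $\T$ are foliations of $T^{2}\times I$ having a single compact leaf, the torus $T^{2}\times\{1\}$ (respectively $\partial T_{1}$), while every other leaf is a half-infinite cylinder $\cong\R^{+}\times\S^{1}$ that limits onto this torus leaf. My strategy is to recognize this common ``one torus leaf plus half-infinite spiralling cylinders'' structure on both sides and then to match the leaves directly, torus leaf to torus leaf and cylinder to cylinder.

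First I would unpack $\mathcal{S}_{1}(Id)$ using the construction of Step~$4$ specialized to $g=1$ and $f=Id$. Since $f=Id$, the suspension foliation $\F_{f}$ of Step~$2$ is just the product foliation, so all the nontrivial structure comes from the infinite $\delta$-superposition. Tracing a leaf through the blocks $(T^{2}\bsl A_{\delta})\t I_{n}$, one annulus is picked up in each block, glued along the circles $\delta\t\{1\}\t\{i_{n}\}$ by the maps $h_{n}$; since the heights $i_{n}$ increase monotonically to $1$, the resulting leaf is a half-infinite cylinder winding monotonically onto $T^{2}\t\{1\}$, and this torus is the only compact leaf. Next I would extract from this description a transverse torus $T'$ (the analogue of $T^{1/2}$) on which, because $f=Id$ forces the relevant annulus foliation $\mathcal{G}$ to be a circle foliation, the induced foliation is a circle foliation; the region $W$ that $T'$ cobounds with $T^{2}\t\{1\}$ is then a copy of $T^{2}\times I$ with one transverse circle-foliated boundary, one torus leaf, and cylinder leaves limiting onto it. Recalling the description of $\T$ as the restriction of the $\R^{3}$-Reeb foliation to $T_{r}\bsl\mathring{T_{1}}$ (equivalently the suspension model on the half $x\in[0,\tfrac12]$), which has exactly the same three features, I would build the homeomorphism $W\to(T^{2}\times I,\T)$ leaf by leaf: send the torus leaf to the torus leaf, parametrize the cylinder leaves of each foliation by a common interval (their spiralling parameter) and match them, and extend over the two transverse boundary tori by matching their circle foliations. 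An alternative, slightly more economical route is to identify $\mathcal{S}_{1}(Id)$ with a generalized turbulization $\T_{*}(f)$ for $f$ a rotation by a rational angle, and then to invoke the already-noted fact that such a $\T_{*}(f)$ (Figure~\ref{T-*}) is homeomorphic to $\T$ (Figure~\ref{fig:5}).

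The hard part will be twofold. First, I must check that the \emph{limiting behaviour} of the cylinder leaves onto the torus leaf agrees on the two sides: the leaves of $\mathcal{S}_{1}(Id)$ accumulate onto $T^{2}\t\{1\}$ through the telescoping intervals $I_{n}$ with $i_{n}\nearrow 1$, and I must verify that this accumulation has the same (trivial) holonomy type, and the same leaf space, as the accumulation of the Reeb cylinders $x^{2}+y^{2}=1+c^{2}e^{-z}$ onto $\partial T_{1}$; this is where the genuine content lies, since the homeomorphism must be continuous up to and including the torus leaf. Second, I must reconcile the two boundary descriptions: $\T$ has a single, everywhere-transverse non-leaf boundary, whereas the second boundary $S^{0}$ of $\mathcal{S}_{1}(Id)$ is mixed, part tangent (a piece of a non-compact leaf) and part transverse. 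I would handle this by showing that the collar lying between $S^{0}$ and the transverse torus $T'$ is an inessential product region, so that passing from $\mathcal{S}_{1}(Id)$ to $W$ does not change the foliated isotopy type, which is what is needed to conclude that $\mathcal{S}_{1}(Id)$ is isotopic to $\T$.
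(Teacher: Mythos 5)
Your proposal is correct and is essentially the paper's own argument: the paper's primary presentation runs the inclusion the other way (locating a torus made of a tangent annulus and a transverse annulus inside $\mathcal{T}$, so that $\mathcal{S}_1(Id)\subset\mathcal{T}$), and then explicitly offers, as ``another way of doing it'', exactly your main route --- applying Remark~\ref{tore-trans-spir} to find the transverse circle-foliated torus $T^{1/2}$ inside $\mathcal{S}_1(Id)$ cobounding a $\mathcal{T}$ component with the torus leaf. Your explicit worry about the collar between $S^{0}$ and $T'$ being a foliated product region is a point the paper glosses over with ``since they foliate the same $3$-manifold'', so you are not missing anything the paper supplies.
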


\begin{proof}
We start from a component $\mathcal{T}$ of turbulization, and we are going to find a torus $T_1$ foliated by a tangent annulus and a transverse annulus, and then we can see that $\mathcal{S}_{1}\subset  \mathcal{T}$ up to isotopy. So $\mathcal{S}_1(Id)$ is isotopic to $\T$ since they foliate the same $3$-manifold.\\

We consider $I\t\mathbb{S}^{1}\times \mathbb{S}^{1} \cong \{ (x,\theta,z), x\in I, \theta \in ]-\pi,\pi], z\in[0,1]\}/\!\raisebox{-.65ex}{\ensuremath{\sim}}$\\
 where $(\theta,x,1) \sim (\theta,x,0)$ foliated by a $\mathcal{T}$ component.\\
Now, let $z_0\in ]0,1[$, and let $L_{z_0}$ be the leaf of $(1,\theta,z_0)$, for $\theta \in ]-\pi,\pi]$. \\
Clearly, $L_{z_0}$ does not depend on $\theta$, because the foliation $\mathcal{T}$ with those coordinate is invariant by rotation around the $z$-axis.\\
Then, there exists $x_0\in ]0,1[$ such that the point $(x_0,\theta,1)\in L_{z_0}$, by following the leaf $L_{z_0}$ when $z$ grows. Note again that $x_0$ does not depend on $\theta \in ]-\pi,\pi]$.\\
Let $A_{h}= \{(x,\theta,0), x_0\leq x \leq 1, \theta \in ]-\pi,\pi]\}$ and $A_{v}= \{(1,\theta, z), \theta \in ]-\pi,\pi], 0\leq z\leq z_0  \}$, as in Figure \ref{tour-spi}.\\
Then set $T_{1}= \{(x,\theta,z)\in L_{z_0}, x_{0}\leq x \leq 1, z_0\leq z\leq 1, \}\bigcup A_{v}\bigcup A_{h}$.\\
We can easily see that $ A_{v}\bigcup A_{h}$ is transverse to the foliation $\mathcal{T}$, and that $\{(x,\theta,z)\in L_{z_0}, x_{0}\leq x \leq 1, z_0\leq z\leq 1, \}$ is tangent to it.\\
Of course between $T_{1}$ and the torus leaf, the non-compact leaves are all half infinite cylinders, which is the case when the spiraling has no holonomy. Hence the turbulization contains spiraling with trivial holonomy.\\

\begin{figure}[htb!]
\includegraphics[width=7cm]{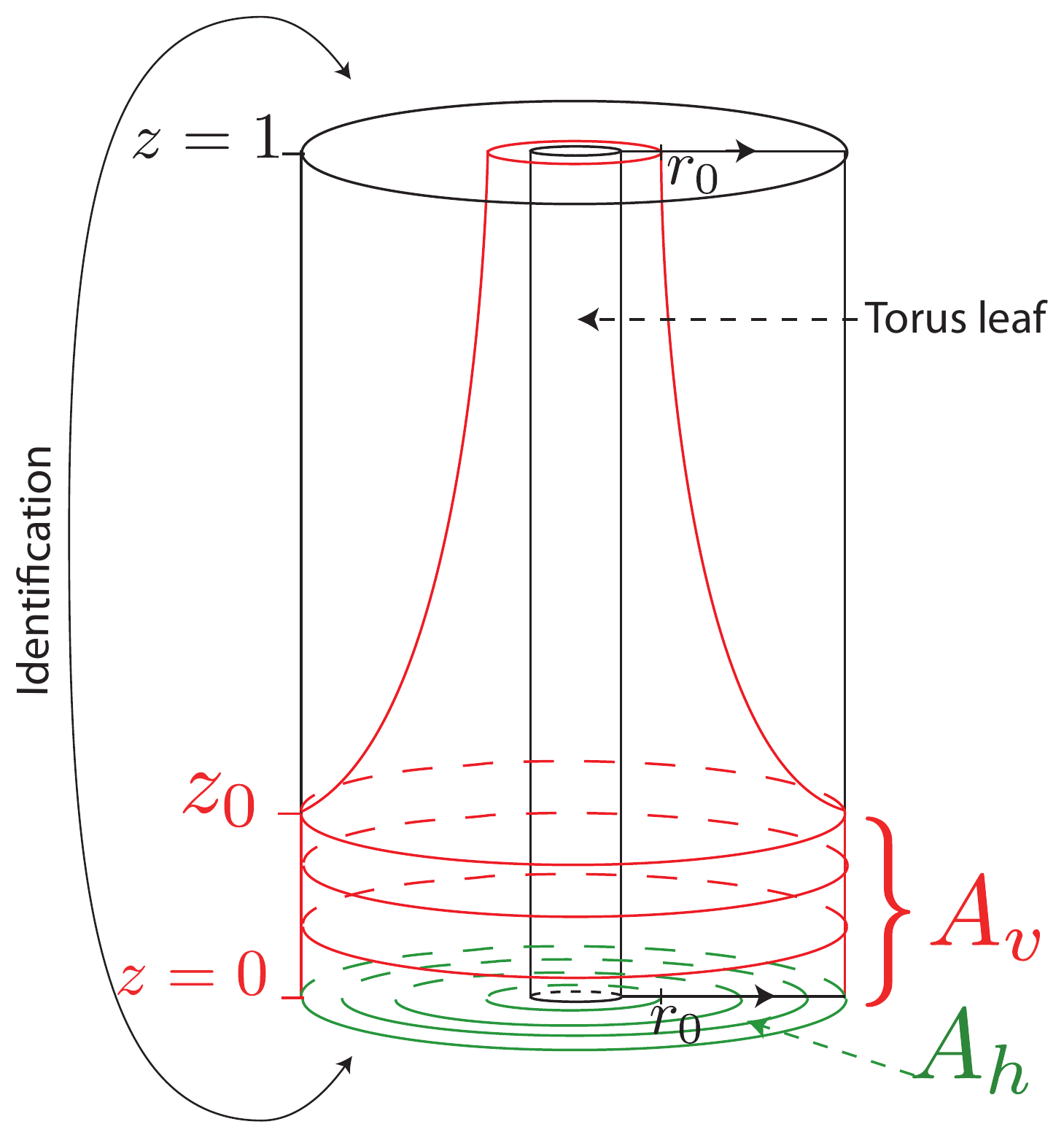}
\caption{}\label{tour-spi}
\end{figure}

Another way of doing it is by applying Remark \ref{tore-trans-spir} for the component $\Ss_{*}(\G,Id)=\Ss_{1}(Id)$ when $\G$ is a circle foliation as Figure \ref{spi-tour} shows it (of course in this case $T^{1/2}$ admits a circle foliation).

\begin{figure}[htb!]
\includegraphics[width=7cm]{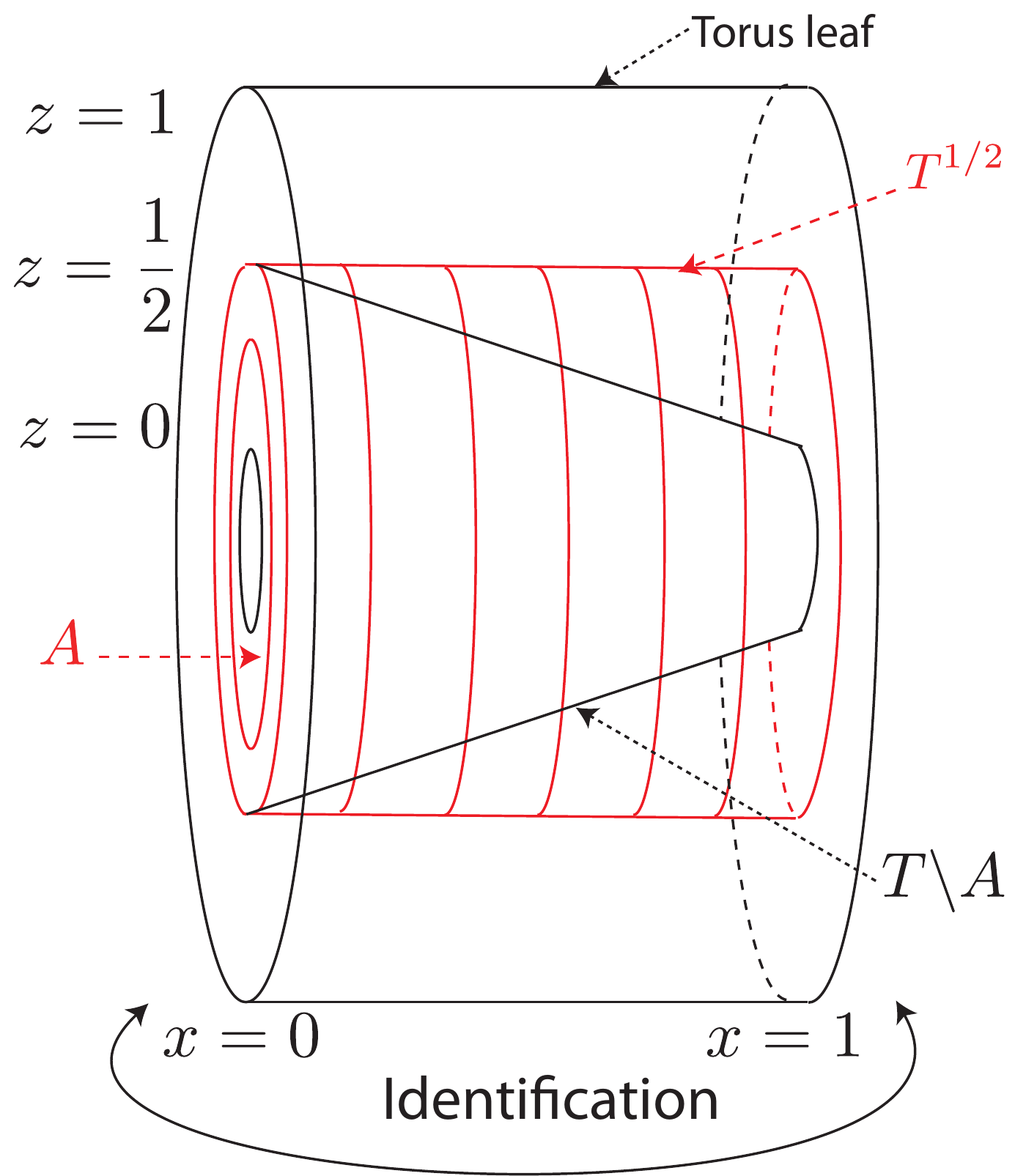}
\caption{}\label{spi-tour}
\end{figure}

\end{proof}

\subsection{Proof of Proposition \ref {torus-leaf}}

\begin{proof}
Of course if $\mathcal{F}$ admits one of those components,  $\mathcal{F}$ admits a torus leaf.\\
The converse is more interesting; it amounts to study the foliation in a neighborhood of a torus leaf.\\

The proof has two parts. First we choose a suitable neighborhood of a torus leaf (Claim \ref{dense}). Then we recognize the foliation of $T^{2}\t I$ as a $\Ss_{*}$ or a $\T_{*}$ component, using a properly embedded transverse annulus in this neighborhood.\\

Let $T$ be the torus leaf and consider a neighborhood of $T$ denoted by $V\cong T\times I$ where $T=T\times\{1\}$. \\
If $T$ is compressible, by the theorem of \cite{No}, the foliation in the $3$-manifold that $T$ bounds admits a Reeb component and so it admits a $\T$ component by Remark \ref{Reeb-turb}.\\

Thus we can assume that $T$ is incompressible, hence $T\times\{0\}$ is also incompressible.\\
Let us choose coordinates for $V$.\\
Let $W= I\t (\S^{1}\t I)$, i.e $W=\{ (x,(\theta, z)), x\in I, \theta\in ]-\pi,\pi], z\in I\}$ foliated by annulus leaves  $I\t(\S^{1}\t\{*\})$.\\
$V\cong W/\!\raisebox{-.65ex}{\ensuremath{\sim}}$, where $(0, (\theta,z)) \sim (1,h(\theta,z))$ and $h$ is a foliation preserving homeomorphism defined by $\F_{|V}$.\\
Set $X=\{(0,\theta,z), \theta\in ]-\pi,\pi], z\in I \}$.\\

First collapse all the $T\t I$ in $V$ whose foliation is $\{T\t \{t\}, t\in I\}$. Then we assume (since $M\not\cong T^{2}\t I$ and $M\not\cong T^{2}\t \S^{1}$ foliated by $\T^{2}\t \{*\}$) that all the torus leaves are isolated (i.e for each torus leaf in $V$ there exists a regular neighborhood of this torus leaf not admitting another torus leaf).

\begin{CL}\label{dense}
We can choose inside $T\t I$ a regular neighborhood $V'$ of a torus leaf $T'$ such that $V'\cong T'\t I$, $T'=T'\times\{1\}$, and there is no torus leaf in $\mathring{V'}$.
\end{CL}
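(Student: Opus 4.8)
The plan is to locate a maximal region lying between consecutive torus leaves and abutting a torus leaf parallel to $T$, the only delicate point being that the torus leaves could a priori accumulate.

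First I would dispose of compressibility: we have already reduced to $T$ incompressible, and any compressible torus leaf inside $V$ would bound, on one side, a piece whose foliation contains a Reeb component by \cite{No}, hence a $\T$ component by Remark \ref{Reeb-turb}; so I may assume every torus leaf in $V$ is incompressible. Disjoint incompressible tori in $T\t I$ are mutually parallel, so after re-choosing the product identification $V\cong T\t I$ I may assume that every torus leaf is a level torus $T\t\{t\}$. Set $Z=\{t\in I:\ T\t\{t\}\ \text{is a leaf of}\ \F\}$; thus $1\in Z$.

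The heart of the claim is that $Z$ is closed, and this is where I would use the regularity of $\F$. Suppose $t_n\in Z$ and $t_n\to t_\infty$. Along each $T\t\{t_n\}$ the tangent plane field of $\F$ coincides with the horizontal plane field; since $\F$ is $\C^2$, its tangent distribution $T\F$ is continuous, so passing to the limit the tangent plane of $\F$ is horizontal at every point of $T\t\{t_\infty\}$ as well. Hence $T\t\{t_\infty\}$ is everywhere tangent to $\F$, i.e. an integral surface; being a compact connected surface of the same dimension as the leaves, it is contained in the leaf $L$ through any of its points, and is open and closed in $L$, whence $L=T\t\{t_\infty\}$. Therefore $T\t\{t_\infty\}$ is a leaf and $t_\infty\in Z$, proving that $Z$ is closed.

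To finish, recall that the collapsing step removed every stack of parallel torus leaves, so $Z$ contains no interval; in particular $Z\neq I$ and $I\bsl Z$ is a nonempty open set. Choose a maximal open interval (a gap) $(a,b)\subset I\bsl Z$. Since $Z$ is closed, its upper endpoint satisfies $b\in Z$, so $T':=T\t\{b\}$ is a torus leaf; setting $V':=T\t[a,b]$ we obtain $V'\cong T'\t I$ with $T'=T'\t\{1\}$ and $\mathring{V'}=T\t(a,b)$ free of torus leaves, which is the required neighborhood. I expect the main obstacle to be precisely the closedness step: one must upgrade the pointwise convergence of the leaves $T\t\{t_n\}$ to the statement that the limit torus is everywhere tangent to $\F$ (exactly continuity of $T\F$), and then invoke that a compact surface tangent to $\F$ at every point is saturated, hence a single leaf. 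Note that this gap argument never needs $Z$ to be finite: it applies verbatim even if the torus leaves accumulate in a Cantor-like set, and it is exactly the closedness of $Z$---guaranteed by the $\C^2$ hypothesis---that ensures the torus capping each gap is a genuine leaf rather than a mere limit of leaves. A secondary, routine point is the simultaneous leveling of the torus leaves, for which I would cite the standard fact that disjoint incompressible tori in $T\t I$ are parallel.
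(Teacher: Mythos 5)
Your argument is correct in substance but takes a genuinely different route from the paper's. The paper argues by contradiction: if no such $V'$ exists, then between any two torus leaves there is another, so the circles $C$ in which the torus leaves meet the transverse annulus $X$ are dense along the transverse arc $I_{0}^{1}$; density then rules out spirals and Reeb annuli in the induced foliation of $X$, forcing a circle foliation, and a cylinder leaf of $\F_{|V}$ would sweep out an annulus missing a whole subarc $I_{a}^{b}$, again contradicting density --- so $V$ would have to be foliated entirely by tori, which the collapsing step has excluded. You instead argue directly: level the torus leaves, prove that the set $Z$ of their heights is closed via continuity of the tangent plane field (a compact level torus everywhere tangent to $\F$ is a leaf), and take a gap of $I\bsl Z$ capped above by a point of $Z$. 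Your route makes explicit the analytic fact the paper leaves implicit in its dichotomy ``either the process stops or the torus leaves are dense'' --- namely that a limit of torus leaves is again a torus leaf --- and it handles Cantor-like configurations transparently, whereas the paper's density argument really needs $Z$ to fill an interval. What the paper's version buys is that it never straightens infinitely many tori at once: everything is read off the fixed annulus $X$. That simultaneous leveling is the one step of yours I would flag: to put \emph{all} torus leaves at constant height by a single product structure one essentially wants to know already that their union is closed, so there is a mild circularity in the order of your reductions. It is easily repaired (level only the convergent sequence $T\t\{t_n\}$ at hand, or run the same tangency/limit argument on the induced line field of $X$, where circles of $C$ can only accumulate on circles of $C$), but the two steps should be swapped or decoupled. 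A last small point: when an interior torus leaf is compressible you conclude the Proposition rather than the Claim; that is harmless here since the Claim is only invoked inside that proof, but it is not literally a proof of the Claim as stated.
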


\begin{proof}
If there is no interior torus leaf in $V$ we are done (choose $V'=V$) so we can assume that there is an interior torus leaf $T_{1}$ in $V$ and consider a thinner regular neighborhood of $T$ (still denoted $V$) where $T\t \{1\}=T$ and  $T\t \{0\}=T_{1}$. By continuing this process either we find such a $V'$, either this process never stops; that means that the set of torus leaves in the leaf space of $\F_{|V}$ is dense. \\
Hence we make the proof by contradiction and we suppose that such a $V'$ does not exist. We have seen above that it means that the set of torus leaves in the leaf space of $\F_{|V}$ is dense (between two torus leaves there always exists another torus leaf). \\
Recall that we can suppose that $T\t \{0\}$ is a torus leaf and that $T\t \{1\}=T$, otherwise the claim is true.\\
Consider the induced foliation by $\F$ on $X$. Call $C$ the set of circles of intersection between the torus leaves and $X$, and denote by $I_{a}^{b}=\{(0,0,z), z\in [a,b] \}$ for any real such that $0\leq a\leq b\leq 1$. Hence $C\cap I_{0}^{1}$ is dense in $I_{0}^{1}$.\\
That imposes that $X$ admits a circle foliation. Indeed, any spirals or Reeb annulus between two circle leaves contradicts the density.\\
Hence, there is two cases :\\
$V$ is trivially foliated by torus leaves which is impossible by assumption of collapsing.\\
$V$ is foliated by cylinder leaves and torus leaves.\\

In the latter case any cylinder leaf contradicts the density.\\
Indeed, up to isotopy and up to changing the coordinates, a cylinder leaf contains the annulus $A=\{(x,\theta,(b-a)x+a), x\in I, \theta \in ]-\pi,\pi]\}$ for given $a$ and $b$ in $I$.\\
Hence $C\cap I_{a}^{b}=\emptyset$ which contradicts the density, see Figure \ref{tore-dense}.\\
\begin{figure}[htb!]
\includegraphics[width=10cm]{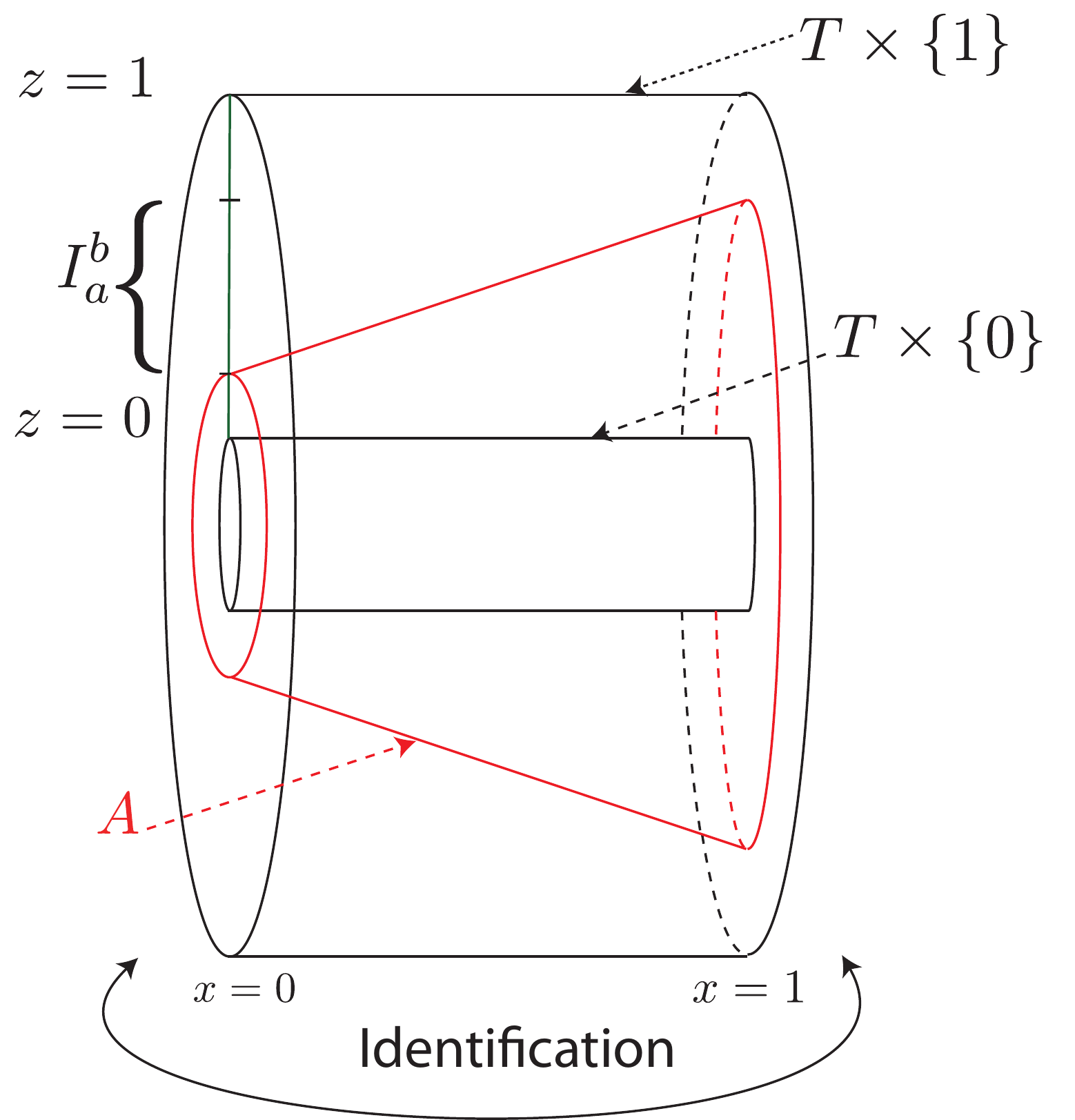}
\caption{Cylinder leaf contradicts the density}\label{tore-dense}
\end{figure}

This ends the proof of Claim \ref{dense}.
\end{proof}

By Claim \ref{dense}, we may assume that $V$ does not contain interior torus leaves.

If $T$ bounds a $\mathcal{L}$ component (see Definition \ref{def-L}, and also Figure \ref{L-T-I}), or the foliation of $Q$ (Waldhausen manifold) pictured in Figure \ref{fig:11}, then $\mathcal{F}$ trivially contains a $\mathcal{T}$ component.\\
Hence we may assume that $\F_{|V}$ is different from those two foliations.

We may recall that $T\times \{0\}$ is incompressible, so by a theorem proved by \cite{Ro} and independently by \cite{Th} we can isotope $T\times\{0\}$ such that it is everywhere transverse to the foliation or so that it is a leaf.\\
Hence, up to isotopy, we can assume that  all the $T\times \{t\}$ are transverse, for $t\in [0,1[$, and so we can consider the $1$-dimensional induced foliation on $T\times \{0\}$.\\
Since the foliation $\mathcal{F}$ is $\mathcal{C}^{2}$, so is the induced foliation on $T\times\{0\}$, and by a heorem of \cite{De}, either the induced foliation on $T\times\{0\}$ is dense (i.e all the leaves are lines); or it admits circle leaves (and some spirals limiting to those circles, or it is a circle foliation).\\

In the former case since the foliation on $T\times\{0\}$ is by parallel lines; one of the boundary component of $X$ is everywhere transverse to the foliation.\\
Thus, there is only one circle leaf, and up to isotopy, the only $\mathcal{C}^{2}$-foliation of the annulus with a boundary leaf and a transverse one is $\mathcal{C}$ (see Figure \ref{C}).\\
Since $\F$ induces this foliation on $X$ and since the foliation on $T\times\{0\}$ is by parallel lines, the induced foliation on the annuli $X_x=\{(x,\theta,z), \theta\in ]-\pi,\pi], z\in I \}$, for each $x\in I$, is also isotopic to $\C$ (note that $X=X_0$).\\
All the foliations possible on $T^2\t I$ are now characterized by the attachment possible between $X_0$ and $X_1$. All those foliations corresponds to a $\mathcal{T}_{*}$ component (see Figure \ref{T-*}).\\

%In the latter case, (there is  on ), up to changing the coordinates, we can consider that $X$ has this circle leaf has boundary component on $T\times\{0\}$ (and of course the other is on $T$). \\
%If $X$ has a circle foliation, that means that all the interior leaves of $V$ are cylinder leaves, since we have supposed that $V$ admits only one torus leaf on its boundary.\\
%As above, up to isotopy, that foliation on $X$ and $T\times\{0\}$ imposes a circle foliation on all the $X_x$ for $x\in I$, and so a foliation by annuli on $W$, and after attachment that corresponds to a $\mathcal{T}$ component (or equivalently a $\mathcal{T}_{*}$ component with rational rotation), see Figure \ref{susp}.\\

The latter case where $T\times\{0\}$ admits at least one circle leaf, corresponds to a $\mathcal{S}_1$ or a $\mathcal{S}_*$ component depending on if there are circle foliation, spiral leaves or Reeb annuli. Recall that by Lemma \ref{eq-turb-spir} $\Ss_{1}(Id)$ is isotopic to $\T$. \\
Recall also that by Remark \ref{tore-trans-spir} a $\mathcal{S}_1$ or a $\mathcal{S}_*$ component can be seen as a foliation on $T^{2}\t I$ where $T^{2}\t \{0\}$ is everywhere transverse (with circle foliation, or admitting spiral leaves or Reeb annuli) and $T^{2}\t \{1\}$ is a leaf (choose $T^{2}\t \{0\}$ as $T^{1/2}$ of Remark \ref{tore-trans-spir}).\\
Moreover, since $\mathring{V}$ does not contain torus leaves, any circle leaf of $X$ is included in a cylinder leaf of $\F_{|V}$, hence there is an infinite number of circles leaves on $X$.\\
Now given such a foliation on $X$ and  on $T\times\{0\}$, the induced foliation by $\F$ on $W$ is up to isotopy (and changing the coordinates) the one of Figure \ref{seul-spir}.\\

\begin{figure}[htb!]
\includegraphics[width=12cm]{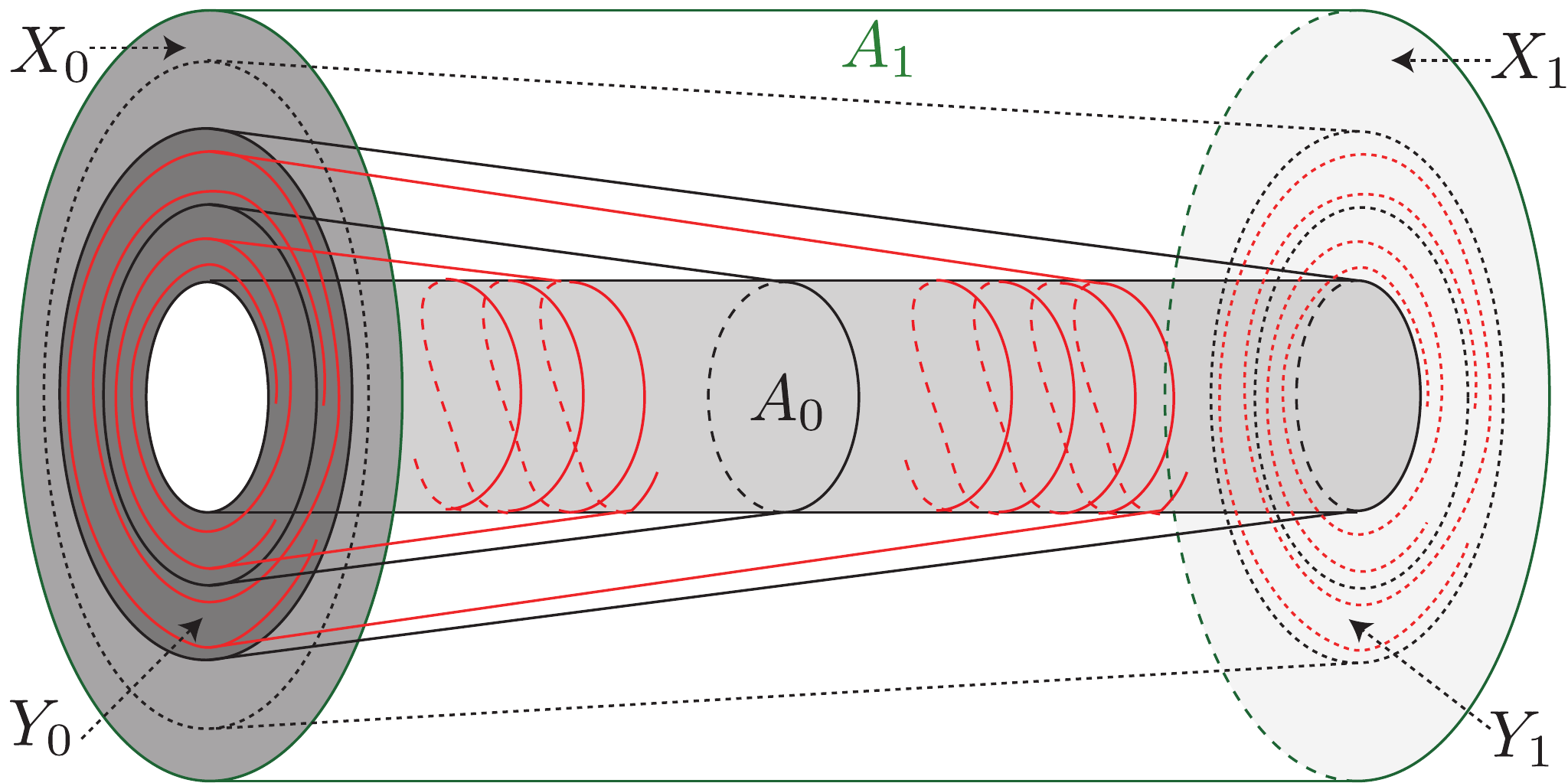}
\caption{Imposed foliation on $W$ }\label{seul-spir}
\end{figure}

Indeed, it admits an annulus leaf $A_{1}=\{ (x,\theta, 1), x\in [0,1[, \theta\in ]-\pi,\pi]\}$, $X_{1}$ has the foliation of $X=X_{0}$, it admits another transverse annulus $A_{0}=\{ (x,\theta, 0), x\in [0,1[, \theta\in ]-\pi,\pi]\}$, with the foliation of $T\times\{0\}$ split along a circle leaf that we denote by $\G$.\\
Note that in Figure \ref{seul-spir} we have chosen spiral leaves on $\G$ but we could have chosen circle foliation or foliation with Reeb annuli.
Moreover, in $W$ the only possibility to follow the foliation on those annuli is by following the projection of $A_{0}$ on a sub-annulus of $X_{0}$ denoted on Figure \ref{seul-spir} by $Y_{0}$.\\
Since $X_{1}$ has the foliation of $X_{0}$, there is a sub-annulus of $X_{1}$ foliated as $Y_{0}$ denoted by $Y_{1}$. Once again, the foliation of $Y_{1}$ can be followed in $W$ by following the projection of $Y_{1}$ on another sub-annulus of $X$. By continuing this process we obtain after gluing $X_{0}$ to $X_{1}$ by $h$, a $\Ss_{1}(f, h)$, or a $\mathcal{S}_*(\G,h)$ where $f$ is the suspension homeomorphism defining $\G$ when $\G$ has no Reeb annuli. (Note that $f=Id$ corresponds to a circle foliation on $\G$).\\
Note that $A_{0}/\!\raisebox{-.65ex}{\ensuremath{\sim}}$ corresponds to $T^{1/2}$ of Remark \ref{tore-trans-spir}.

\end{proof}

\section{Proposition \ref{sep_torus} and consequences}\label{prop-non-taut}

It is well known that Reeb's component (and Reeb annulus) are not taut.\\
\cite{B1}, generalized this fact to manifolds with at most one boundary component.\\
Here we give more details, and generalize it to manifolds with more boundary components, if we assume that the transverse orientation is the \textit{same} on each boundary component.\\
This is the goal of Proposition \ref{sep_torus} proved in Subsection \ref{preuve-sep-torus}.\\
In Subsection \ref{thin-wal} we will see that the hypothesis of Proposition \ref{sep_torus} are thin by giving interesting examples of foliations on Waldhausen manifold.\\
Finally in Subsection \ref{Non-taut} we will give a partial converse of Proposition \ref{sep_torus} which is Theorem \ref{tore_Go} (\cite{Go}) and Corollary \ref {gen-goodman}.\\

\subsection{Proof of Proposition \ref{sep_torus} }\label{preuve-sep-torus}

\begin{RK}\label{trans-or}
Note that in an orientable manifold, if a foliation is transversely orientable, then all the leaves are orientable.\\
However, the converse is not true~: there exists a foliation of $T^3$ with all the leaves orientable but which is not transversely orientable (see the foliation $\mathcal{L}_1$ in Subsection \ref{T3}).\\
Nevertheless, this foliation is not taut and if we assume that a foliation of an orientable manifold is taut and that all the leaves are orientable then this foliation is transversely orientable.
\end{RK}

\begin{DEF}
Assume that a manifold $M$ with non-empty boundary admits a transversely orientable foliation $\mathcal{F}$ such that the boundary of $M$ is a union of leaves. Then we say that \textbf{\emph{$\partial M$ has the same transverse orientation}} if the transverse orientation on those boundary leaves point all inward or point all outward.\\

\end{DEF}

 Proposition \ref{sep_torus} is a direct consequence of the following proposition.
\begin{PR}\label{sep_torus-arc}
Let $M$ be a $3$-manifold with a transversely orientable foliation $\mathcal{F}$, and $n\in\N$.\\
If the boundary of $M$ is a union of leaves $\dps \bigcup _{i=1...n} T_{i}$ with the same transverse orientation, or if  $\mathcal{F}$ contains a compact separating leaf $T_{0}$, then for all $i\in \{1,...,n\}, T_{i}$ does not admit a transverse loop or properly embedded transverse arc. 
\end{PR}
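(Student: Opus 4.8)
The plan is to extract everything from the transverse orientation, by showing that a transverse loop or arc crosses the leaves it meets \emph{monotonically}, and then checking that each of the two hypotheses obstructs such a crossing. Fix a nowhere-zero continuous vector field $\nu$ realizing the transverse orientation of $\F$, so $\nu$ is transverse to every leaf; in the boundary case I normalize so that, after possibly reversing $\nu$, it points into $M$ along all of $\partial M$ (the hypothesis ``same transverse orientation'' makes this legitimate, and the conclusion is insensitive to the global sign of $\nu$). Let $\gamma$ be any transverse loop or properly embedded transverse arc, oriented. Since every point of $M$ lies on a leaf and $\gamma$ is transverse to the leaf through each of its points, the continuous function $t\mapsto \langle \dot\gamma(t),\nu(\gamma(t))\rangle$ is nowhere zero on its connected domain, hence of constant sign. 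This is the engine of the whole argument: along $\gamma$ every crossing of every leaf happens in the same co-orientation direction.

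Next I would handle the loop case for both a separating leaf and a boundary leaf at once. Suppose a transverse loop $\gamma$ meets a compact separating leaf $T_{0}$. Then $T_{0}$ cuts $M$ into two sides, and the constant-sign property says $\gamma$ can only ever cross $T_{0}$ from one fixed side to the other; but a loop must return to its basepoint, which would require a crossing in the opposite direction, a contradiction. Equivalently, the algebraic intersection number of $\gamma$ with the null-homologous surface $T_{0}$ vanishes while all geometric crossings carry the same sign, forcing $\gamma\cap T_{0}=\emptyset$. The identical computation rules out a transverse loop meeting any boundary leaf $T_{i}$: crossing $T_{i}$ in the fixed direction would push $\gamma$ out of $M$ with no way to re-enter, so no boundary leaf admits a transverse loop.

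Then I would dispatch the transverse-arc case under the boundary hypothesis. Let $\gamma\colon[0,1]\to M$ be a properly embedded transverse arc; its endpoints lie on boundary leaves, along which $\nu$ points inward. At the initial endpoint $\gamma$ enters $M$, so $\dot\gamma$ lies on the interior side, giving $\langle\dot\gamma,\nu\rangle>0$; at the terminal endpoint $\gamma$ exits $M$, so $\dot\gamma$ is outward, opposite to $\nu$, giving $\langle\dot\gamma,\nu\rangle<0$. This contradicts the constant sign, so under this hypothesis \emph{no} properly embedded transverse arc exists at all, and in particular none meets any $T_{i}$. Together with the previous step this yields the conclusion for the boundary hypothesis.

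The delicate point I expect to be the real obstacle is the interaction of a \emph{separating} leaf with transverse \emph{arcs}: unlike a loop, an arc joining the two sides of $T_{0}$ may cross it exactly once, all with the same sign, so the monotone-crossing lemma does not by itself forbid an arc from meeting a separating leaf — the taut product foliation on $T^{2}\t I$, whose central torus is separating yet pierced by a transverse arc, shows this is unavoidable. Hence the genuine content for a separating leaf is carried entirely by the loop obstruction of the second step, which already forces non-tautness precisely when the admissible witnesses are loops (e.g.\ for closed manifolds, the situation used in the Corollary). I would therefore structure the write-up so that the separating hypothesis is treated through the loop argument and the boundary hypothesis through the arc argument, with the constant-sign lemma as the common tool, and state the separating-leaf conclusion only for the witness type it actually controls.
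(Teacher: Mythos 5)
Your argument is correct in what it establishes and is essentially the paper's own proof in different clothing: the constant-sign function $t\mapsto\langle\dot\gamma(t),\nu(\gamma(t))\rangle$ is the paper's angle function $h(x)=(T\gamma_x,N_x)$ (positive sign corresponding to $h<\pi/2$), and your endpoint contradiction is the paper's Intermediate Value Theorem step. Two remarks. First, a small fix: for an arbitrary transverse field $\nu$ the pairing $\langle\dot\gamma,\nu\rangle$ can vanish even when $\dot\gamma$ is transverse to the leaf (leaf tangent plane horizontal, $\nu=(1,0,1)$, $\dot\gamma=(1,0,-1)$); you must either take $\nu$ to be the unit normal field to the leaves for some Riemannian metric --- which is exactly what the paper does --- or read off the sign of the $\nu$-component of $\dot\gamma$ modulo the leaf's tangent plane. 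Second, your ``delicate point'' is a correct mathematical observation and not a defect of your write-up: the separating-leaf hypothesis genuinely controls only transverse \emph{loops}, as the product foliation of $T^2\times I$ shows (its middle torus is a compact separating leaf pierced by the obvious properly embedded transverse arc). The paper's proof carries the same implicit restriction --- it reduces the separating case to arcs with \emph{both} endpoints on the separating leaf, which accounts only for the pieces of a transverse loop cut along $T_0$, and never treats a properly embedded arc crossing $T_0$ once --- so the version you propose to state (separating compact leaf $\Rightarrow$ no transverse loop through it; boundary leaves with the same transverse orientation $\Rightarrow$ no transverse loop and no properly embedded transverse arc) is what is actually proved, and it suffices for the applications in the paper that invoke the separating clause, which concern closed manifolds where loops are the only admissible witnesses.
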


\begin{proof}
We are going to show that for every properly embedded arc $\gamma : I \rightarrow M$ with endpoints in a separating compact leaf or in a boundary leaf $T$, there exists a point of $\gamma(I)$ where the foliation is tangent to $\gamma$.\\

 This implies the proposition, assuming first that $\partial M = \emptyset$; because any closed curve transverse to a separating compact leaf $T$, intersects at least two times $T$, hence the closed curve is a union of arcs with endpoints in $T$, in each side of $T$. So we will only study one side of $T$ with an arc which meets $T$ only on its endpoints.\\
 If there is only one boundary leaf, this is exactly what we want to have.\\
If there are at least two compact boundary leaves, the endpoints of $\gamma$ may be on two differents boundary leaves; and since the transverse orientation is the same on those two leaves, the following applies similarly.\\

Up to isotopy, we assume that the induced orientation by the non-zero continuous vector field is a normal vector field to the leaves noted $N_x$ for each $x\in M$.\\
Recall that $M$ is a Riemannian manifold, hence in the tangent space of $M$ we can consider the following angles.\\
Let $h$ be the map~:\\
$$
 \left\{ \begin{array}{ll}
h~:\gamma(I)\rightarrow [0,\pi]\\ 
\\
x\mapsto (T\gamma_x,N_x)\\
  
\end{array} \right.
$$
\\

Where $(T\gamma _x,N_x)$ is the non-oriented angle between the tangent vector to $\gamma$ in $x$ (noted $T\gamma_x$) and the normal vector $N_x$.\\
$\mathcal{F}$ is transversely oriented (i.e $N$ is nowhere zero and continuous) and $\gamma$ is embedded, so $T\gamma$ is continuous; hence, $h$ is continuous.\\
Without loss of generality we can say that the transverse orientation on $T$ is such that $h(\gamma(0))>\pi/2$. Thus, $h(\gamma(1))<\pi/2$, because at $\gamma(1)$ the arc gets out of $T$, and $N_{\gamma(1)}=N_{\gamma(0)}$, see Figure \ref{pointe}. Indeed, the leaves are orientable, and if there are at least two boundary leaves we have supposed that the transverse orientation is the same.\\

By the Intermediate value Theorem ($h$ is continuous), there exists\\ $x\in~\gamma(]0,1[)$ such that $h(x)=\pi/2$. That means that $T\gamma_x$ is in the tangent plane of the leaf passing through $x$, i.e $\gamma$ is tangent to $\mathcal{F}$ in $x$.\\
Hence $\mathcal{F}$ cannot be taut; which ends the proof of Proposition \ref{sep_torus}.

\begin{figure}[htb!]
\includegraphics[width=10cm]{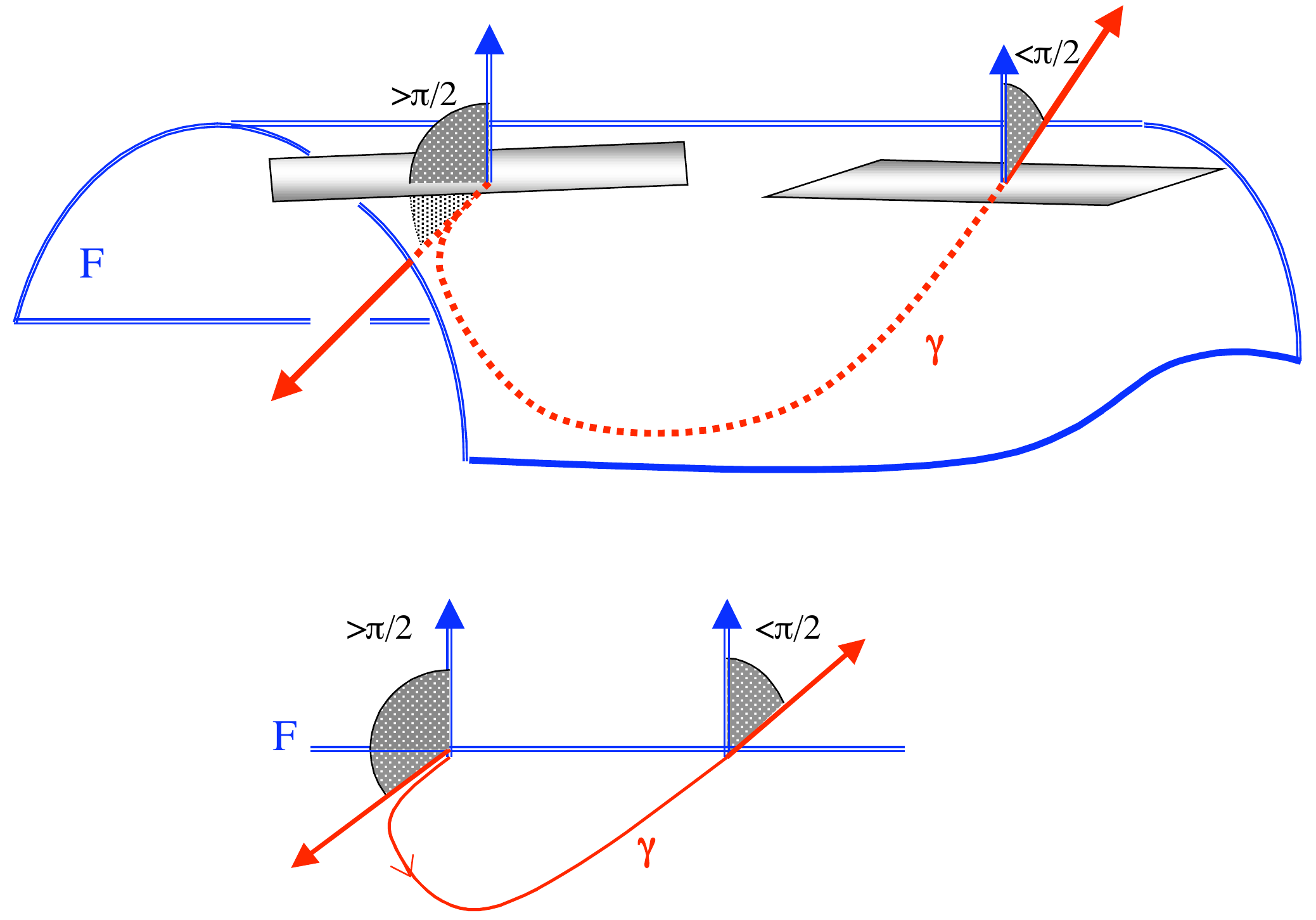}
\caption{}\label{pointe}
\end{figure}

\end{proof}

\begin{RK}
The above proposition similarly implies that Reeb annuli are not taut.
\end{RK}

\begin{RK}
Note that the assumptions of Proposition \ref{sep_torus}  are sharp. Indeed :
\begin{enumerate}
\item If there are a transverse boundary component and a tangent one, then the foliation may be taut.\\
For example, the $\mathcal{T}$ component is taut, i.e we can find a properly embedded arc $\gamma$ as in Figure \ref{fig:5}, with an endpoint on the transverse boundary torus, and the other on the torus leaf.\\
\item When there are at least two boundary leaves without the same transverse orientation, Proposition \ref{sep_torus} is not true. \\
Trivially, the foliation $S\times I$ is taut; where $S$ is any closed surface. But the transverse orientation on $S\times\{0\}$ is opposite to the one on $S\times\{1\}$ (one points inward and the other outward ). \\
\item The assumption of transverse orientation in Proposition \ref{sep_torus} is necessary as suggested by Lemma \ref{wal} on the Waldhausen manifold.\\

\end{enumerate}

\end{RK}

\subsection{Waldhausen manifold}\label{thin-wal}
\begin{LM}\label{wal}
Waldhausen manifold admits :
\begin{enumerate}
\item A taut, non-transversely orientable foliation with a single torus boundary leaf, and all the leaves are compact.
\item A taut, non-transversely orientable foliation with a single torus boundary leaf, and all the interior leaves are non-compact.
\end{enumerate}
\end{LM}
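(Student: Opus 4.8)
The plan is to realize the Waldhausen manifold $Q$ as the orientable twisted $I$-bundle over the Klein bottle and to produce both foliations on a double cover in an equivariant way. Concretely, let $\iota\colon T^2\to T^2$ be the free, orientation-reversing involution $\iota(x,y)=(x+\tfrac12,-y)$, so $T^2/\iota\cong K$ is the Klein bottle, and set $\sigma(x,y,t)=(\iota(x,y),-t)$ on $T^2\t[-1,1]$. Then $\sigma$ is a free, orientation-preserving involution, $Q:=(T^2\t[-1,1])/\sigma$ is orientable with boundary the single torus $T$ (the common image of $T^2\t\{1\}$ and $T^2\t\{-1\}$), and the slice $T^2\t\{0\}$ descends to a Klein bottle $K_0\subset Q$. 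The key descent principle I would use throughout is: a $\sigma$-invariant foliation of $T^2\t[-1,1]$ descends to $Q$, and a transverse orientation of such a foliation descends if and only if $\sigma$ preserves it.

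For assertion $(1)$ I take the product foliation of $T^2\t[-1,1]$ by the tori $T^2\t\{t\}$. It is $\sigma$-invariant, hence descends to a foliation $\mathcal F_1$ of $Q$ whose leaves are $K_0$ (image of $t=0$) and, for each $s\in(0,1]$, the torus that is the common image of $T^2\t\{s\}$ and $T^2\t\{-s\}$, the leaf $s=1$ being $\partial Q$. Thus every leaf is compact and $\partial Q$ is the unique boundary leaf. The vertical arcs $\{*\}\t[-1,1]$ are everywhere transverse and meet every leaf, and $(\mathcal F_1)_{|\partial Q}$ is a single torus carrying no Reeb annulus, so $\mathcal F_1$ is taut. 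Finally $K_0$ is a non-orientable leaf in the orientable manifold $Q$, so by Remark \ref{trans-or} $\mathcal F_1$ cannot be transversely orientable; this settles $(1)$.

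For assertion $(2)$ I would build a $\sigma$-invariant turbulization. On $T^2\t[0,1]$ put a $\mathcal T$ component with torus leaf $T^2\t\{1\}$ and transverse boundary $T^2\t\{0\}$ carrying the $\iota$-invariant horizontal circle foliation $\{y=\mathrm{const}\}$; on $T^2\t[-1,0]$ put the $\sigma$-image of this component. Since the two induced circle foliations on $T^2\t\{0\}$ coincide, the two families of non-compact (cylinder) leaves glue across the transverse central torus into non-compact leaves, producing a $\sigma$-invariant foliation $\tilde{\mathcal F}$ of $T^2\t[-1,1]$ whose only compact leaves are $T^2\t\{\pm1\}$. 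It descends to a foliation $\mathcal F_2$ of $Q$ with unique compact leaf $\partial Q$ and all interior leaves non-compact. Tautness is inherited from the $\mathcal T$ component, which is taut: every leaf of $\tilde{\mathcal F}$ is met by a transverse arc, and these project (after a small perturbation to ensure embeddedness) to transverse loops or properly embedded transverse arcs of $Q$ with both endpoints on $\partial Q$, while $(\mathcal F_2)_{|\partial Q}$ again has no Reeb annulus.

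The crux is the non-transverse-orientability of $\mathcal F_2$, where all leaves are now orientable so Remark \ref{trans-or} does not apply; I would argue on the cover. The turbulization $\tilde{\mathcal F}$ is transversely orientable, and its transverse orientation is unique up to a single global sign. Reading this co-orientation off the $t$-direction along the central transverse torus $T^2\t\{0\}$, and noting that $\sigma$ reverses $t$, one sees that $\sigma$ interchanges the two transverse orientations of $\tilde{\mathcal F}$; hence neither descends and $\mathcal F_2$ is not transversely orientable. I expect this final step, together with the bookkeeping that the projected transverse arcs are genuinely properly embedded with endpoints on the single boundary torus, to be the main technical obstacle; the remaining properties follow directly from the $I$-bundle description and from the tautness of $\mathcal T$.
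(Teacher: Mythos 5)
Your part (1) is essentially the paper's own construction written in covering-space coordinates (a Klein bottle leaf in the middle, tori elsewhere, a transverse vertical arc, and non-orientability of the Klein bottle leaf forcing non-transverse-orientability via Remark \ref{trans-or}), so nothing to add there. For part (2) you take a genuinely different route: the paper works on a fundamental domain $W=I\times\S^1\times I$, takes the product of a spiral annulus foliation with $I$, glues by an explicit foliation-preserving homeomorphism exchanging the two half-annuli, and detects non-transverse-orientability by exhibiting a M\"obius band inside a leaf (Claim \ref{cl-trans-or}); you instead build a $\sigma$-equivariant double turbulization on the cover $T^2\times[-1,1]$ and argue that the deck transformation interchanges the two co-orientations, so neither descends. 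That descent criterion is correct and the computation goes through: along $T^2\times\{0\}$ the co-orientation is (up to a global sign) $\partial_y$, and $d\sigma$ sends $\partial_y$ to $-\partial_y$ while preserving that torus. Note, however, that it is the $y$-direction, not the $t$-direction, that carries the co-orientation on the central torus: the leaves are transverse to $T^2\times\{0\}$ there, so $\partial_t$ is essentially tangent to them. Your phrase about ``reading the co-orientation off the $t$-direction along the central transverse torus'' should be corrected (the equivalent computation near the boundary tori, where the co-orientation really is $\pm\partial_t$, gives the same conclusion). What your approach buys is a conceptually clean criterion; what the paper's buys is an explicit picture of the offending M\"obius band.

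The one genuine error is your assertion that all leaves of $\mathcal{F}_2$ are orientable, so that Remark \ref{trans-or} does not apply. The circles $\{y=0\}$ and $\{y=1/2\}$ are $\iota$-invariant, so the corresponding bi-infinite cylinder leaves of $\tilde{\mathcal{F}}$ are $\sigma$-invariant; on such a cylinder $\sigma$ preserves the waist circle (rotating it by a half-turn) while swapping the two ends, hence restricts to a free orientation-reversing involution, and the quotient leaves are open M\"obius bands. This does not damage your proof --- on the contrary, it hands you the paper's own argument for free, since a non-orientable leaf in the orientable $Q$ already forces non-transverse-orientability --- but the sentence claiming orientability of all interior leaves is false and should be removed.
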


\begin{proof}

Recall that Waldhausen manifold $Q$ is the twisted product of the Klein bottle with an interval :\\
$Q=K\widetilde{\times}I$, where $K$ is the Klein bottle.\\
$Q$ has one torus boundary component $T$.\\
Let us represent $Q$ as follows~:\\
Consider $W=I\times \mathbb{S}^1\times I\cong\{(x,\theta, z), x\in [0,1], \theta\in ]-\pi,\pi],z\in [0,1]\}$.\\

Now $ Q\cong W/\!\raisebox{-.65ex}{\ensuremath{\sim}}$ where $(x,\theta,0) \sim (1-x,-\theta,1)$.\\
 
Part $(1)$ of Lemma \ref{wal} is easy to construct and is represented in Figure~\ref{compact-Q}.\\
With the above representation, there is a Klein bottle leaf which is $K=\{(1/2, \theta, z), \theta\in ]-\pi,\pi], z\in [0,1]\}/\!\raisebox{-.65ex}{\ensuremath{\sim}}$,\\
  and the other compact leaves are torus leaves which are for each $x\in [0,1]$~: \\
  $ \dps T_x=\big\{\{(x, \theta, z), \theta\in ]-\pi,\pi], z\in [0,1]\}\cup \{(1-x, \theta, z), \theta\in ]-\pi,\pi], z\in [0,1]\}\big\}/\!\raisebox{-.65ex}{\ensuremath{\sim}}$.\\

\begin{figure}[ht!]
\includegraphics[width=10cm]{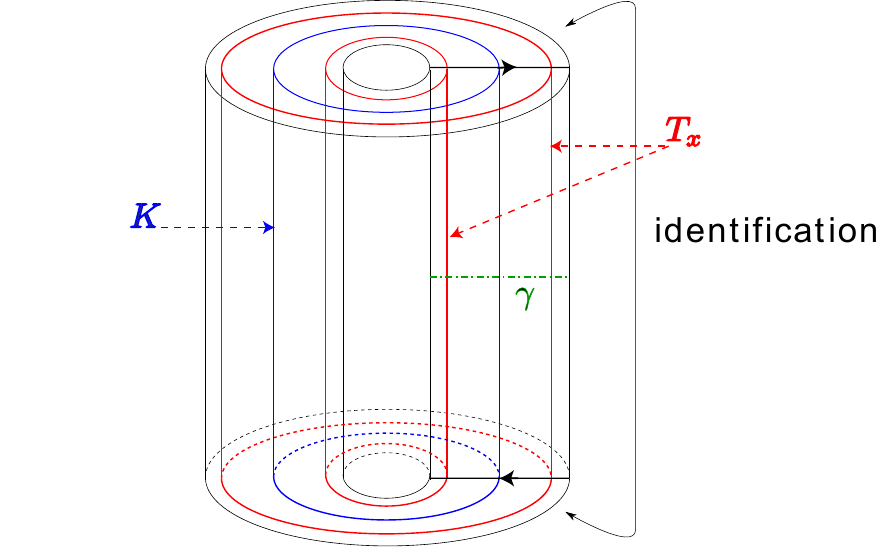}
\caption{Compact foliation of $Q$}\label{compact-Q}
\end{figure}

This foliation is taut, because for example $\gamma=\{(x,0,1/2), x\in [0,1]\}$ is a properly embedded transverse arc, with both endpoints in the torus boundary leaf (see Figure \ref{compact-Q}).\\
It is non-transversely oriented since it admits a non-orientable leaf (recall that $Q$ is orientable).\\
Hence Part (1) of Lemma \ref{wal} is proven.\\

Part $(2)$ of Lemma \ref{wal} needs more work and is the following construction.\\

%We paste two foliated components homeomorphic to $T^{2}\times I$ together, so that it is taut and that it induces a foliation on $Q$.\\
%Those two components are homeomorphic (with respect to the foliation) to $\mathcal{C}\times [0,1]$ on $W\cong I \times \mathbb{S}^1\times I$ (where $\C$ was defined in ). We isotope the resulting foliation on $W$, so that the spirals match when we do the identification on $W$ in order to obtain a foliation on $Q$.\\
For each $z\in [0,1]$, we set $A_z=\{(x, \theta, z), x\in [0,\frac{1}{2}], \theta\in ]-\pi,\pi]\}$; and $A'_{z}=\{(x, \theta, z), x\in [\frac{1}{2},1], \theta\in ]-\pi,\pi]\}$.\\
Consider a (clockwise) spiral foliation (see Figure \ref{dir}) on each annulus $A_{z}\cup A_{z}'=\{(x, \theta, z), x\in I, \theta\in ]-\pi,\pi]\}$. Denote the circle leaves by $C_z=\{(0, \theta, z),  \theta\in ]-\pi,\pi]\}$ and $C'_{z}=\{(1, \theta, z),  \theta\in ]-\pi,\pi]\}$.
For each $z\in I$, we let $C^{1/2}_{z}=\{(\frac{1}{2}, \theta, z),  \theta\in ]-\pi,\pi]\}$, they are all circles transverse to the foliation.\\
That induces on $A_{z}$ and on $A'_{z}$ a the foliation $\mathcal{C}$ (see Figure \ref{C}). \\

\begin{figure}[ht!]
\includegraphics[width=7cm]{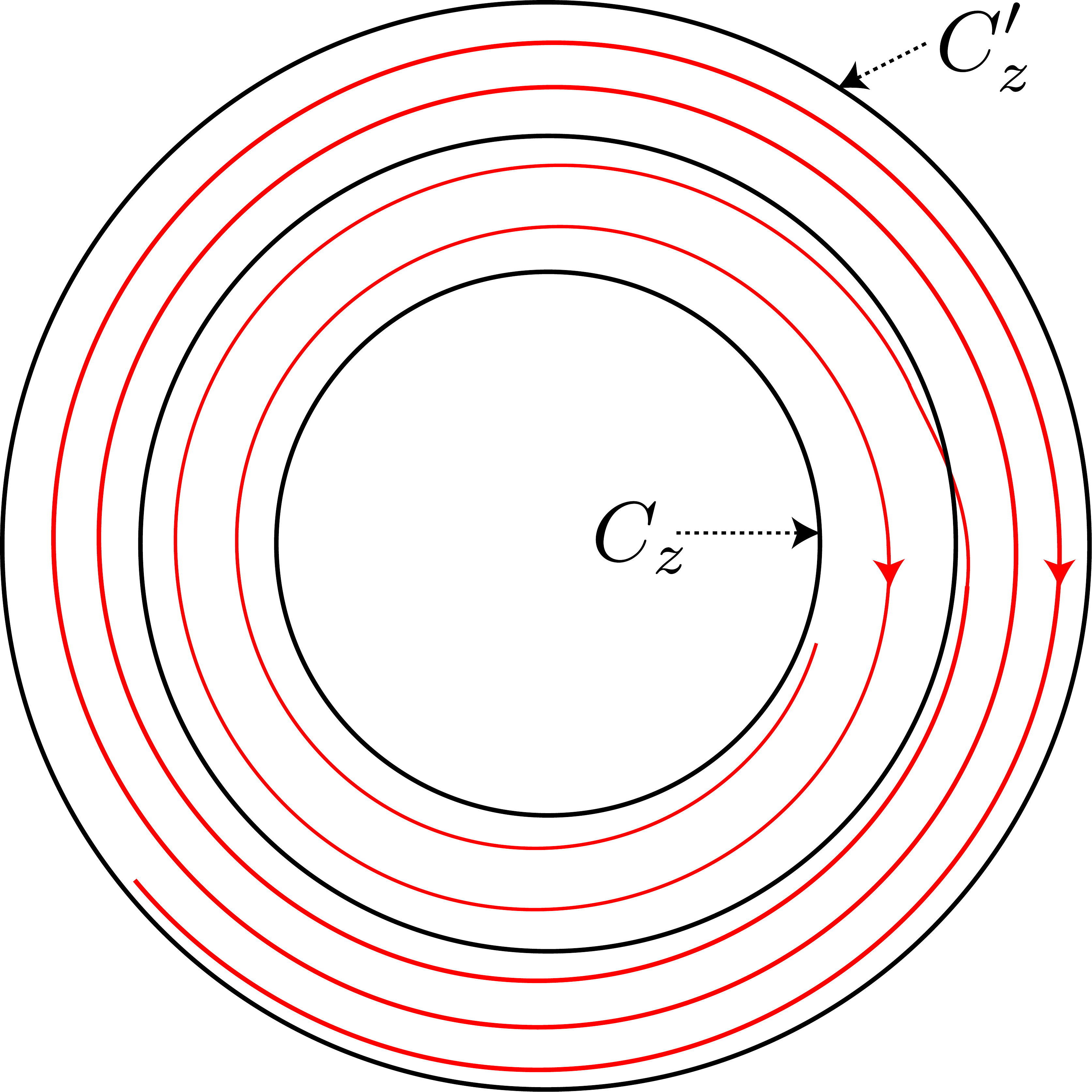}
\caption{Induced foliation on $A_{z}\cup A'_{z}, z\in [0,1] $.}\label{fig:20}
\end{figure}

That gives a taut product foliation on $\dps W=\bigcup _{z\in I} A_{z}\cup A'_{z}$ denoted $\widehat{\mathcal{F}}$.\\
 
 Now we want to use this foliation of $W$ to obtain by identification a foliation on $Q$.\\
We may recall that $ Q\cong W/\!\raisebox{-.65ex}{\ensuremath{\sim}}$ where $(x,\theta,0) \sim (1-x,-\theta,1)$.\\
So we only need to check that the foliation $\widehat{\mathcal{F}}$ in $W$ is preserved by the identification.\\
More precisely, it remains to check that any leaf of $A_{0}$ is identified on any leaf of $A'_{1}$ and any leaf of $A'_{0}$ is identified on any leaf of $A_{1}$.\\
In particular that means that for each $ \theta\in ]-\pi,\pi]$, the leaf of $A_{0}$ at $(\frac{1}{2}, \theta, 0)$ must be identified on the leaf of $A'_{1}$ passing at $(\frac{1}{2}, -\theta, 1)$.\\

One way to do it is as follows~:\\
Let $f$ be a diffeomorphism such that :\\
$$
 \left\{ \begin{array}{ll}
f~:A_{1}\cup A'_{1}\rightarrow A_{0}\cup A'_{0}\\ 
\\
f(A_{1})=A'_{0}, f(A'_{1})=A_{0} \\
\\
\forall (1/2, \theta, 1) \in C^{1/2}_{1}, f(1/2, \theta, 1)=(1/2,-\theta,0)\in C^{1/2}_{0}\\
\end{array} \right.
$$
and such that $f$ preserves the foliation, i.e $f$ maps a half-spiral leaf on a half-spiral leaf.\\
Note that the definition of $f$ induces $f(C_{1})=C'_{0}, f(C'_{1})=C_{0}$. \\
We can consider $ Q\cong W/\!\raisebox{-.65ex}{\ensuremath{\sim '}}$ where $((x,\theta),1) \sim ' (f(x,\theta),0)$ with the induced foliation by $\widehat{\mathcal{F}}$ because this foliation is preserved by the identification.\\
We denote by $\mathcal{F}$ this new foliation on $Q$.\\
Note that this new representation of $Q$ with $\sim '$ is isotopic to the first one with $\sim$.
%We keep fixed the foliation on each $A_{z}, z\in [0,1]$.\\
%Now we consider $z\in \mathbb{Z}/ 2\mathbb{Z}$ for more simplicity.\\
%For all $(x,\theta,z)\in A_{z}$ let us call $L_{x,\theta,z}$ the leaf of $(x,\theta,z)$; and for all $(x,\theta,z)\in A'_{z}$ let us call $L_{x,\theta,z}'$ the leaf of $(x,\theta,z)$.\\
%Let $(x,\theta,z)\in A_{z}$; define $\theta '\in ]-\pi,\pi]$ such that $L_{x,\theta,z} $ meets the circle transverse to the foliation  $\{(\frac{1}{2}, \eta, z),  \eta\in ]-\pi,\pi]\}$ in $(\frac{1}{2},\theta ',z)$ for a $\theta '\in ]-\pi,\pi]$ which defines $\theta '\in ]-\pi,\pi]$ given a leaf $L_{x,\theta,z} $.\\
%Now we define an isotopy of the foliation on $A'_{z}$ such that $\forall (\widetilde{x},\widetilde{\theta},z)\in L_{x,\theta,z}$, $L_{\frac{1}{2},\theta ',z+1}'$ meets the points $(1-\widetilde{x},-\widetilde{\theta},z+1)$.\\
%We do this for all $(x,\theta,z)\in A_{z}\backslash C_{z}$.\\
% Note that a priori, $L_{\frac{1}{2},\theta ',z+1}'$ meets the ray of angle $-\theta$ at a distance greater than $1-x$, so we need to bring $L_{\frac{1}{2},\theta ',z+1}'$ closer the transverse circle $\{(\frac{1}{2}, \eta, z),  \eta\in ]-\pi,\pi]\}$.\\
% If we call $\mathcal{C}'$ the isotoped foliation on each $A'_{z}$ for $z\in \mathbb{Z}/ 2\mathbb{Z}$, we can also consider the foliation $\mathcal{C}'$ on each $A'_{z}$, for all $z\in [0,1]$. On the other part of $W$ we consider the foliation $\widehat{\mathcal{F}}$. This gives us an isotoped foliation on $W$, that we still denote $\widehat{\mathcal{F}}$.\\

$\mathcal{F}$ is taut, because for example $\gamma= \{(x,0,1/2), x\in I\}$ is a properly embedded transverse arc to $\mathcal{F}$.\\

$\F$ admits a single torus boundary leaf, which is $\displaystyle (\bigcup _{z\in  [0,1]} C_z\cup C'_z )/\!\raisebox{-.65ex}{\ensuremath{\sim '}}$ and the interior leaves are non-compact (they all contain an embedded $\R\t I$).

The proof of Claim \ref{cl-trans-or} ends the proof of Part (2) of Lemma \ref{wal}.

 \begin{CL}\label{cl-trans-or}
 $\mathcal{F}$ is not transversely oriented.
\end{CL}

\begin{proof}
Let $\widehat{L}$ be the leaf of $\widehat{\F}$ in $W$ containing the arc $\widehat{C}=\{(\frac{1}{2}, 0, z),  z\in [0,1]\}$, and consider a regular neighborhood $\widehat{B}$ of $\widehat{C}$ in $\widehat{L}$.\\
Set $a_{i}=\partial \widehat{B}\cap A_{i}$, and $a_{i}'=\partial \widehat{B}\cap A_{i}'$.\\
$\widehat{B}/\!\raisebox{-.65ex}{\ensuremath{\sim '}}$ is homeomorphic to a Mobius band since $f(a_{1})=a'_{0}$ and $f(a'_{1})=a_{0}$ (see Figure \ref{fig:10}), and by construction is included in a leaf of $\mathcal{F}$. Since $Q$ is oriented, $\F$ is non-transversely oriented.

\end{proof}

In conclusion, $\mathcal{F}$ is taut, non-transversely oriented with a torus boundary leaf and with non-compact interior leaf, as in Figure \ref{fig:10}, which ends the proof of Part (2) of Lemma \ref{wal}.\\

\begin{figure}[ht!]
\includegraphics[width=10cm]{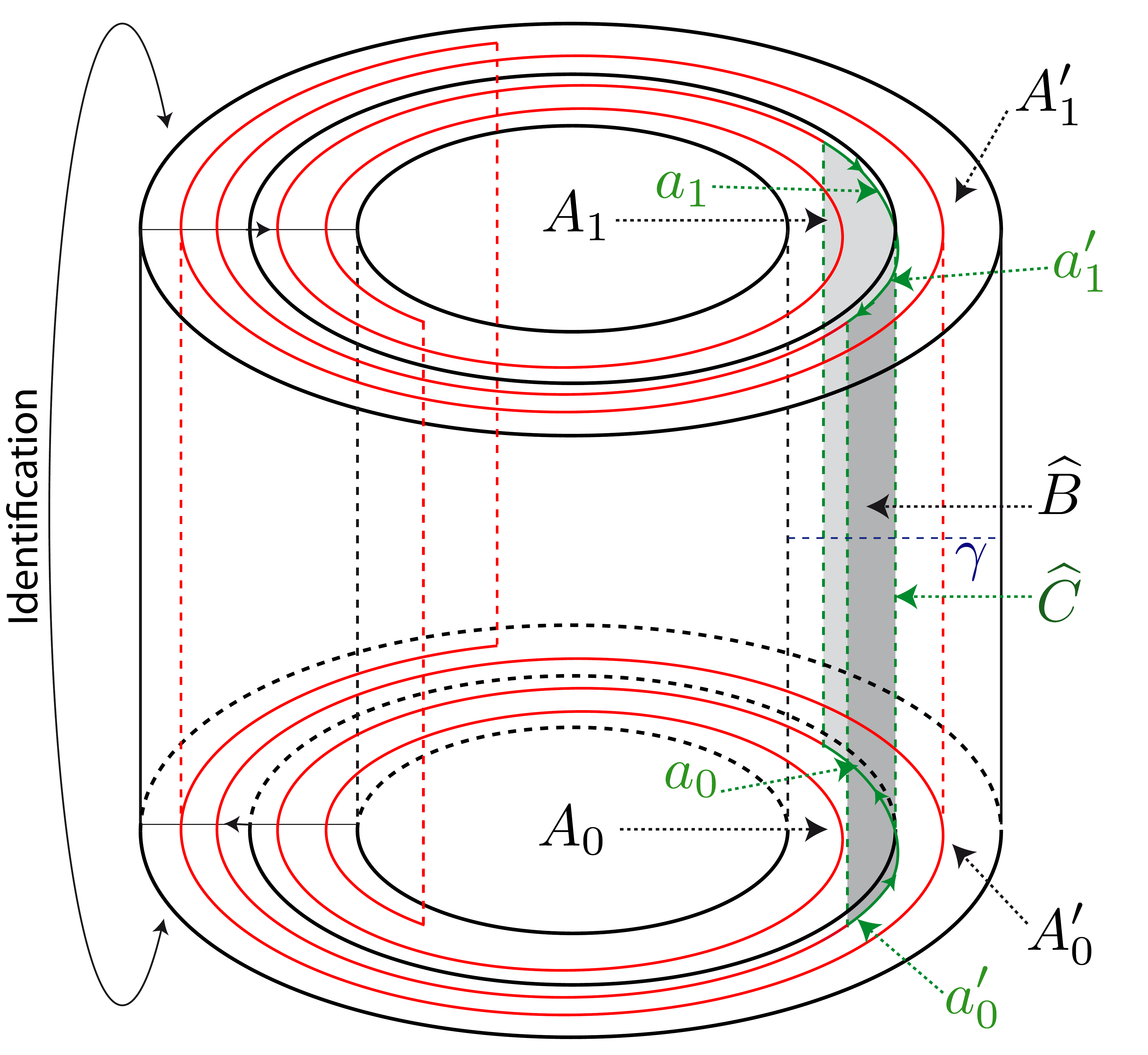}
\caption{Foliation $\mathcal{F}$ on $Q$}\label{fig:10}
\end{figure}

\end{proof}

\subsection{Partial converse: existence of torus leaf}\label{Non-taut}

Now we suppose that a transversely oriented foliation is non-taut and see that it admits a torus leaf but we cannot conclude if it  is separating or not, that is why it is a partial converse to Proposition  \ref{sep_torus}.

\begin{THM}[\cite{Go}]\label{tore_Go}
If a leaf of a transversely orientable $\mathcal{C}^1$-foliation of a closed $3$-manifold does not intersect a closed transverse curve then it is a torus leaf.
\end{THM}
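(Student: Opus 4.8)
The statement is a theorem of S. Goodman; I sketch the strategy one would follow to reprove it. Throughout, let $L$ denote a leaf of the transversely oriented $\mathcal{C}^1$-foliation $\mathcal{F}$ of the closed $3$-manifold $M$ such that no closed transverse curve meets $L$. I will repeatedly use one elementary remark: a connected curve transverse to a transversely oriented foliation crosses the leaves always with the same sign, since this sign is a continuous nowhere-vanishing function on a connected curve, hence constant. Thus every closed transversal is \emph{monotone} for the transverse orientation. Moreover $M$ is orientable and $\mathcal{F}$ is transversely orientable, so by Remark \ref{trans-or} every leaf, in particular $L$, is orientable. Since the only closed orientable surface with zero Euler characteristic is the torus, it suffices to prove two things: that $L$ is compact, and that $\chi(L)=0$.

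\textbf{Step 1 (compactness).} The plan is to show that a non-compact $L$ always meets a closed transversal, contradicting the hypothesis. By the definition of transverse orientability, fix a nowhere-zero vector field $Z$ transverse to $\mathcal{F}$, with flow $\phi^{t}$, and fix a transverse arc $\tau$ issuing from a point $p\in L$. Using compactness of $M$, one follows $L$ and studies the interaction between the holonomy along loops contained in $L$ and the first-return behaviour of $\phi^{t}$ to $\tau$. The point to extract is that a non-compact leaf forces a monotone transverse recurrence: either $L$ re-enters a foliation box it has already visited with a definite transverse gap, in which case concatenating an in-leaf path with the short transverse segment of $\tau$ and smoothing the tangential corner produces a closed $+$transversal through $L$; or $L$ limits on another leaf whose holonomy already carries a closed transversal (as the core circle of a Reeb component does), which can then be pushed along the local product structure to a transversal meeting $L$. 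Manufacturing such a monotone closed curve out of purely recurrent local data, while guaranteeing it genuinely meets $L$ rather than merely a nearby leaf, is the main obstacle, and I expect it to be the technical heart of the proof.

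\textbf{Step 2 (Euler characteristic).} Once $L$ is compact, consider its germinal holonomy on each side, a representation of $\pi_{1}(L)$ into $\mathrm{Germ}^{+}(\mathbb{R},0)$. Absence of a closed transversal through $L$ forces this holonomy to be semistable on each side: no holonomy germ may have ``crossing'' fixed points accumulating at the origin, since such a configuration yields, in an adapted transverse annulus, a monotone return and hence a closed transversal meeting $L$. Under this semistability one argues $\chi(L)=0$ by a Poincar\'e--Hopf count: if $\chi(L)\neq 0$ then every tangent vector field on $L$ vanishes somewhere, and feeding a zero into the holonomy, or into the $1$-dimensional foliation induced on a transverse copy of $L$, again manufactures a closed transversal through $L$. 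Hence $\chi(L)=0$, and combined with the orientability noted above, $L$ is a torus.

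Finally, a word on where the differentiability hypothesis enters. The delicate steps above all reduce to classifying the $1$-dimensional foliations induced on transverse tori and ruling out pathological (exceptional) minimal sets, so that the only genuine obstructions to closing a monotone transversal are the compact torus leaves themselves; this is exactly where one invokes a Denjoy-type theorem (\cite{De}). This also explains, conversely, why Proposition \ref{sep_torus} and the present theorem fit together: the leaves obstructing tautness are precisely tori.
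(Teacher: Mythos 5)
The paper does not actually prove this statement: it is quoted verbatim from \cite{Go} and used as a black box (only its Corollary \ref{gen-goodman} is proved here, by doubling along the boundary). So there is no in-paper argument to compare yours against; your proposal has to stand on its own, and as written it does not. It is a roadmap with the two decisive steps left open. Step 1 (a non-compact leaf of a foliation of a compact manifold always meets a closed transversal) is indeed a true and standard fact, and your outline --- recurrence into a previously visited foliation box, then closing a leafwise path with a short positive transverse arc, using transverse orientability to keep the sign monotone --- is the right mechanism; but you explicitly defer the closing-up construction as ``the technical heart,'' so nothing is proved. More seriously, Step 2 is where the actual content of Goodman's theorem lies, and your argument there is not an argument: ``feeding a zero into the holonomy \ldots again manufactures a closed transversal'' names the desired conclusion without supplying the mechanism. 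A Poincar\'e--Hopf count tells you a vector field on $L$ has a zero when $\chi(L)\neq 0$; it does not tell you how that zero forces a positive transverse return to $L$. The genus $\geq 2$ case is precisely the hard one (the sphere case is comparatively easy via Reeb stability, which forces $M\cong \S^{2}\t\S^{1}$ with an evident closed transversal --- a case you do not even separate out, since you assert $\chi(L)=0$ before knowing it).

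Two further cautions. First, your appeal to a Denjoy-type theorem \cite{De} to ``rule out exceptional minimal sets'' is misplaced for the stated regularity: Denjoy requires $\C^{2}$, while the theorem as quoted is for $\C^{1}$ foliations (indeed the paper itself raises, just after the statement, the question of whether $\C^{0}$ suffices --- so the regularity threshold is delicate and cannot be waved at). Second, the reduction ``orientable $+$ $\chi=0$ $\Rightarrow$ torus'' silently uses orientability of $M$ via Remark \ref{trans-or}; that is fine under the paper's standing conventions but should be said. In sum: the skeleton (non-compact leaves are recurrent hence pierced by closed transversals; compact leaves of non-zero Euler characteristic are likewise pierced; what survives is a torus) is the correct shape of Goodman's theorem, but both load-bearing implications are asserted rather than proved, so this is a statement of strategy, not a proof.
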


Therefore, if a foliation is not taut then it admits a torus leaf.

\begin{Q}
We may wonder if it is still true when the foliation is only supposed to be $\mathcal{C}^0$.
\end{Q}

\begin{COR}\label{gen-goodman}
Consider a transversely oriented $\mathcal{C}^1$-foliation on $M$ tangent to the boundary (possibly $\partial M =\emptyset$), then the following assertions are true.
\begin{enumerate}
\item If a leaf does not admit a properly embedded transverse arc or transverse loop then it is a torus.
\item If a leaf is separating, then it is a torus leaf.
\item If the boundary components of $M$ are a union of leaves admitting the same transverse orientation, then all are tori.
\end{enumerate}
\end{COR}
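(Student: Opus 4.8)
The plan is to deduce all three assertions from Goodman's Theorem~\ref{tore_Go}, using a doubling construction to pass from the bounded case to the closed case, and using Proposition~\ref{sep_torus-arc} to feed the hypotheses of (2) and~(3) into~(1). First I would reduce (2) and (3) to~(1). If $L$ is a compact separating leaf, then the proof of Proposition~\ref{sep_torus-arc} shows that $L$ admits neither a transverse loop nor a properly embedded transverse arc; likewise, if $\partial M$ is a union of leaves carrying the same transverse orientation, each boundary leaf $T_i$ admits no transverse loop or arc. In both cases the hypothesis of~(1) holds for the relevant leaf, so once~(1) is established, (2) and~(3) follow at once.

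To prove~(1), I would first dispose of the closed case: if $\partial M=\emptyset$ there are no arcs, and ``$L$ admits no transverse loop'' means exactly that $L$ meets no closed transverse curve, so Theorem~\ref{tore_Go} gives directly that $L$ is a torus. When $\partial M\neq\emptyset$ the plan is to close up $M$. After isotoping $\mathcal F$ so that it is a product in a collar of each (tangent) boundary leaf, I form the double $DM=M\cup_{\partial M}M'$ with the doubled foliation $D\mathcal F$; every former boundary leaf and every interior leaf of $M$ then becomes an interior leaf of the closed manifold $DM$. I claim $D\mathcal F$ is transversely orientable: taking $N$ to be the unit normal field of $\mathcal F$, which at a boundary leaf is purely normal, one checks that $N$ on $M$ and $-\sigma_*N$ on the mirror copy $M'$ (with $\sigma$ the reflection) agree along $\partial M$, so they glue to a continuous transverse field on $DM$.

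The heart of the argument is the following claim: \emph{in $DM$, the leaf $L$ meets no closed transverse curve.} Indeed, suppose $\gamma\subset DM$ were an embedded closed transverse curve with $\gamma\cap L\neq\emptyset$. Since $\partial M$ is a union of leaves, $\gamma$ meets it transversally, so $\gamma\cap M$ is either all of $\gamma$ (a transverse loop in $M$) or a union of properly embedded transverse arcs with endpoints on $\partial M$; the same holds in $M'$. If $L$ comes from an interior leaf of $M$, then some piece of $\gamma\cap M$ meets $L$ and is a transverse loop or arc through $L$; if $L$ is a former boundary leaf, then at a point of $\gamma\cap L$ the curve crosses from $M$ into $M'$, producing a properly embedded transverse arc in $M$ with an endpoint on $L$. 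Either way this contradicts the hypothesis on $L$. Hence $L$ meets no closed transverse curve in $DM$, and Theorem~\ref{tore_Go}, applied to the transversely orientable $\mathcal C^1$ foliation $D\mathcal F$ of the closed manifold $DM$, yields that $L$ is a torus.

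The step I expect to be the main obstacle is making the passage to the closed case rigorous, i.e.\ verifying that the doubled object is genuinely a transversely orientable $\mathcal C^1$ foliation. Transverse orientability is settled by the normal-field computation above, but $\mathcal C^1$-smoothness is delicate: the naive double of a $\mathcal C^1$ foliation tangent to the boundary is only $\mathcal C^0$ unless the normal derivative of the tangent plane field vanishes along the boundary leaf, which is precisely why I first normalize $\mathcal F$ to a product near $\partial M$. Finally, since $M$ is orientable and $\mathcal F$ is transversely oriented, every leaf is orientable by Remark~\ref{trans-or}, and because $D\mathcal F$ is transversely orientable its leaves are two-sided; thus Goodman's conclusion is literally ``torus'' and no Klein-bottle leaf can intervene, so the statement ``it is a torus'' in each of (1)--(3) is unambiguous.
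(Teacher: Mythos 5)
Your proposal is correct and follows essentially the same route as the paper: reduce (2) and (3) to (1) via Proposition~\ref{sep_torus-arc}, then prove (1) by doubling $M$ along its boundary and applying Theorem~\ref{tore_Go}, observing that a closed transverse curve in $D(M)$ meeting the leaf would induce a transverse loop or properly embedded transverse arc in $M$. Your extra care about the transverse orientability and $\mathcal{C}^1$-regularity of the doubled foliation is a welcome refinement of a point the paper leaves implicit, but it is not a different argument.
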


\begin{proof}
When $\partial M \not =\emptyset$, we consider the double of $M$, i.e $D(M)=\displaystyle M\bigcup _{\partial M}M$ (the union of two copies of $M$ with opposite orientation). Notice that any closed transverse loop passing trough a leaf in $D(M)$ would induce a closed transverse curve, or a properly embedded transverse arc in $M$. Now we assume that a leaf does not admit a properly embedded transverse arc or transverse loop, and we apply Theorem \ref{tore_Go} to $M$ (or do $D(M)$ if $\partial M \not =\emptyset$), so this leaf is a torus, so part (1) is true.\\

If a separating compact surface is a leaf, then by Proposition \ref{sep_torus-arc}, there is no transverse loop passing through it, so this is a torus by part (1), so part (2) is true.\\

By applying Proposition \ref{sep_torus-arc} and part (1), we obtain part (3).
\end{proof}

\section{Separating compact leaf}\label{non-taut-sec}

As Theorem \ref{tore_Go} says, a non-taut foliation admits a torus leaf. Then there are two possibilities. This torus leaf can be separating or non-separating. The former case is explored in this section while the latter case is the aim of Section \ref{non-sep-torus}.\\
Note that there are three types of non-taut foliations admitting a separating torus leaf depending on if we can modify the foliation so that it becomes taut.
\begin{itemize}
\item Foliations admitting a Reeb component which can be deleted to obtain another taut foliation (example in $\S^{2}\t \S^{1}$).
\item Foliations admitting a Reeb component which cannot be deleted (example in $\S^{3}$).
\item Non-taut and Reebless foliations (example among graph manifolds).
\end{itemize}

\subsection{Non-taut foliation admitting Reeb component}
Consider a non-taut foliation of a manifold $M$ containing a Reeb component $R$ and denote by $T=\partial R$. In order to know if we can delete a neighborhood of $R$ and replace it by a trivially foliated solid torus $\D^{2}\t \S^{1}$ we need to know how $R$ is attached in $M$ and how the foliation looks like in a neighborhood of $T$ in $M\bsl \mathring{R} $.\\
This is the aim of Lemma \ref{del-Reeb}. Next we will see the opposite process which consist on considering a taut foliation and adding Reeb component as Proposition \ref{non-taut} suggests it.

In the light of Proposition \ref{torus-leaf} in a neighborhood of $T$ in $M\bsl \mathring{R} $ there exists either (generalised) spiraling component or (generalised) turbulization component, bounded by $T$ or bounded by a torus leaf $T'$ included in a neighborhood of $T$ in $M\bsl \mathring{R} $. Hence up to deleting the foliation of $T^{2}\t I$ bounded by $T$ and $T'$ we can consider that $T$ bounds a (generalised) spiraling component or (generalised) turbulization component (that changes the foliation but not the manifold $M$).\\
We want to replace the Reeb component $R$ by a trivially foliated solid torus, i.e foliated by meridians disks $\D^{2}\t \S^{1}$. That imposes that $T$ must bound in $M\bsl \mathring{R} $ a turbulization component $\T$ (see Figure \ref{fig:5}) because all the other components do not induce a circle foliation on the transverse boundary torus.\\
Moreover if the circles $C$ of the circle foliation induced by $\T$ on the transverse torus, bound meridian disks in $R$ then we can delete $R\cup \T$ and replace it by the trivially foliated solid torus $\D^{2}\t \S^{1}$ by gluing meridians disks on the circles $C$; and once again that changes the foliation but not the manifold $M$.\\

\begin{figure}[ht!]
\includegraphics[width=12cm]{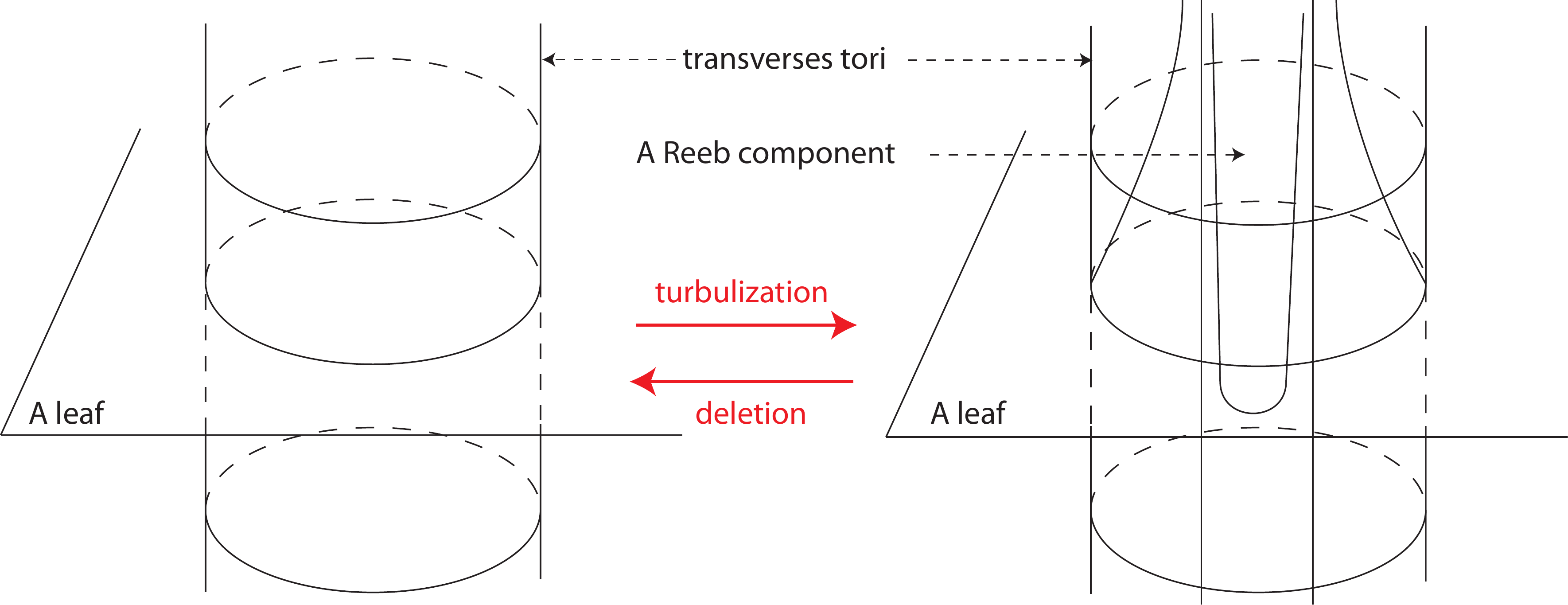}
\caption{Foliation $\mathcal{F}$ on $Q$}\label{idee-tour}
\end{figure}

Note that if the circles $C$ do not bound meridian disks in $M$ as in Figure \ref{S3} we cannot delete the Reeb component, as in the case of the Reeb foliation of $\S^{3}$ (foliation obtained by gluing two Reebs components to obtain $\S^{3}$).\\
Indeed the boundary of the meridians disks of a Reeb component of $\S^{3}$ are longitudes for the other boundary component.\\
 
 \begin{figure}[ht!]
\includegraphics[width=10cm]{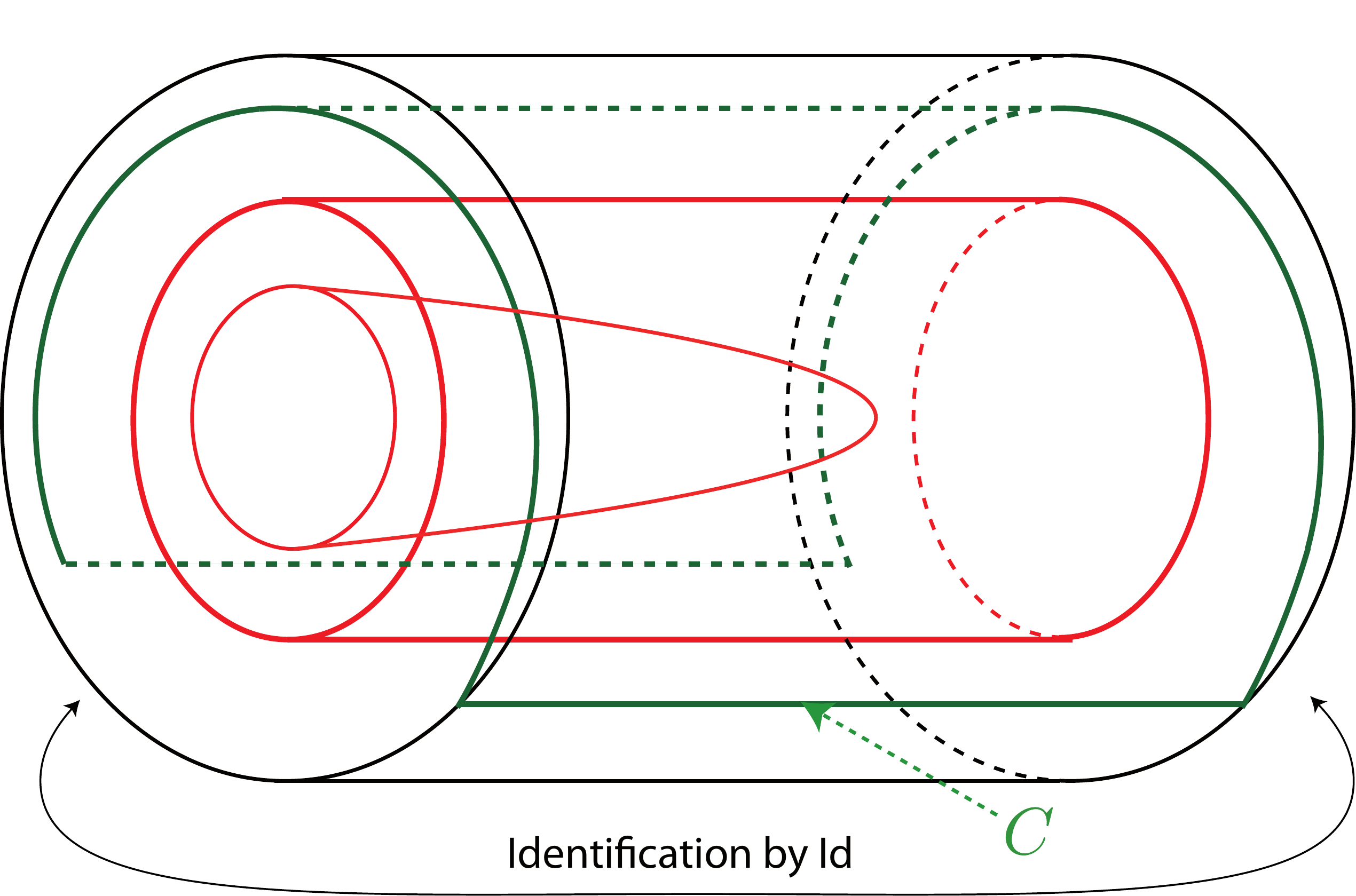}
\caption{Part of the Reeb foliation of $\S^{3}$}\label{S3}
\end{figure}
 
Hence we have proved the following Lemma :

\begin{LM}\label{del-Reeb}
A Reeb component $R$ can be deleted if and only if up to deleting a $T^{2}\t I$, $\partial R$ bounds a $\T$ component in $M\bsl \mathring{R} $ whose circles $C$ of the circle foliation induced by $\T$ on the transverse torus bound disks in $\T\cup R$
\end{LM}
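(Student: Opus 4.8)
The plan is to prove the biconditional of Lemma \ref{del-Reeb} by analyzing exactly when the surgery ``delete $R$, glue back a trivially foliated $\D^{2}\t\S^{1}$'' is compatible with the ambient foliation $\F$. The statement has two directions, and the forward discussion in the text already assembles the needed ingredients, so the proof is mainly a matter of organizing them cleanly and supplying the converse.

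First I would set up the local picture near $T=\partial R$. Applying Proposition \ref{torus-leaf} to a neighborhood of $T$ inside $M\bsl\mathring{R}$, the foliation $\F$ restricted there contains either a $\Ss_{*}$ or a $\T_{*}$ component, bounded by $T$ itself or by a nearby interior torus leaf $T'$. Deleting the trivially foliated $T^{2}\t I$ between $T$ and $T'$ (which changes $\F$ but not $M$), I may assume $T$ directly bounds one of these components. The key observation — which I would justify by inspecting the boundary behavior in Figures \ref{T-*} and \ref{spi-gen} — is that only the turbulization component $\T$ (Figure \ref{fig:5}) induces a genuine \emph{circle} foliation on its transverse boundary torus; the generalized components $\T_{*}(f)$ with $f$ irrational give a dense (line) foliation, and the spiraling components $\Ss_{*}$ carry spiral or Reeb-annulus leaves on the transverse side. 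Since gluing in the disk-foliated solid torus $\D^{2}\t\S^{1}$ requires matching a foliation by meridian disks — whose boundaries are circles — across the transverse torus, compatibility forces a circle foliation there, hence forces $\T$.

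Next I would handle the meridian condition. Granting that $T$ bounds a $\T$ component, let $C$ denote the leaves of the induced circle foliation on the transverse torus. To replace $R\cup\T$ by $\D^{2}\t\S^{1}$ foliated by meridian disks, each circle $C$ must be the boundary of a disk of that new foliation, i.e. $C$ must bound a disk in the solid torus we glue; this is possible precisely when the slope of $C$ is the meridian slope, equivalently when $C$ bounds a disk in $\T\cup R$. This establishes the ``if'' direction: if $\partial R$ bounds a $\T$ component whose circles $C$ bound disks in $\T\cup R$, the surgery goes through, deleting $R$ and yielding a foliation of $M$ with the Reeb component removed. For the ``only if'' direction, I would argue contrapositively: if after any deletion of a $T^{2}\t I$ the torus $\partial R$ does \emph{not} bound a $\T$ component, then by the dichotomy above the transverse boundary does not carry a circle foliation, so no meridian-disk gluing matches it; and if it does bound a $\T$ component but the circles $C$ fail to bound disks in $\T\cup R$ — as in the $\S^{3}$ picture of Figure \ref{S3}, where these circles are longitudes on the complementary torus — then filling with meridian disks is topologically obstructed, so $R$ cannot be deleted.

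I expect the main obstacle to be the converse direction, specifically making rigorous the claim that \emph{only} a $\T$ component admits a circle foliation on its transverse boundary, so that no alternative gluing (not necessarily by meridian disks) could remove the Reeb component. One must rule out the possibility of deleting $R$ by some cleverer reglueing that does not proceed through a circle-foliated transverse torus; the point is that replacing $R$ by a trivially foliated solid torus intrinsically demands a circle foliation on the interface, and a disk-filling compatible with it, so the characterization via $\T$ and the disk-bounding condition is forced. The figures \ref{fig:5}, \ref{T-*}, \ref{spi-gen}, and \ref{S3} carry the geometric content, and the role of Proposition \ref{torus-leaf} is to guarantee that the list of possible local models near $T$ is complete, which is what lets the case analysis be exhaustive.
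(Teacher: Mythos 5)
Your proof follows essentially the same route as the paper's own argument: the paper likewise invokes Proposition \ref{torus-leaf} to reduce (after deleting a $T^{2}\t I$) to the case where $\partial R$ bounds an $\Ss_{*}$ or $\T_{*}$ component, observes that only $\T$ induces a circle foliation on the transverse torus as required for gluing meridian disks, and uses the $\S^{3}$ example of Figure \ref{S3} for the obstruction when the circles $C$ do not bound disks. The concern you raise about ruling out a ``cleverer reglueing'' is not addressed in the paper either, since ``deleting $R$'' is there understood precisely as replacing it by the trivially foliated solid torus, which is the reading you adopt.
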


\begin{RK}
Note that the Reeb foliation of $\S^{2}\t \S^{1}$ (foliation obtained by gluing two Reeb components to obtain $\S^{2}\t \S^{1}$) can be transformed by applying two times this process and we obtain the product foliation $\S^{2}\t \S^{1}$. Indeed that gives two solid tori trivially foliated by disks glued along their circle boundary two obtain sphere leaves.
\end{RK}

\begin{PR}\label{non-taut}
From each transversely oriented taut foliation $\F$ on a closed $3$-manifold $M$, ($M\not\cong \mathbb{S}^2\times\mathbb{S}^1$ trivially foliated), we can construct a non-taut foliation on $M$ (with a Reeb component) and a non-taut foliation without Reeb component on $M\backslash V$, where $V$ is a solid torus.\\

\end{PR}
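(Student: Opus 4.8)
The plan is to start from the given taut foliation $\F$ and to modify it in two independent ways, using the machinery of turbulization developed in Section \ref{Sec-Reeb-turb}. The key observation is that taut foliations of closed manifolds (other than the trivially foliated $\S^2\t\S^1$) are never everywhere a product foliation by spheres, so $\F$ contains a leaf $L$ admitting a closed transverse loop $\gamma$ meeting $L$ transversely. First I would take a point $p\in L$ and, following $\gamma$ for a short time, produce a small flow-box, i.e. a thin tubular neighborhood $N=\D^2\t\S^1$ of a closed transverse curve on which the induced foliation is (up to isotopy) the product foliation by meridian disks $\D^2\t\{*\}$, with $\partial N$ transverse to $\F$. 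This uses only that $\F$ is transverse to $\gamma$ and the standard flow-box description near a transverse arc.

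For the first foliation (with a Reeb component on $M$), the plan is to apply turbulization along $\partial N$: the induced foliation on $\partial N$ is a circle foliation (by the boundaries of the meridian disks), so by the definition of turbulization I can glue a $\T$ component along $\partial N$ to convert this transverse boundary torus into a torus leaf. The meridian circles $C$ of this circle foliation bound disks in $N$, so after the $\T$ component I can fill back $N$ itself as a Reeb component $R$ (recall Remark \ref{Reeb-turb}: a Reeb component contains a $\T$ component, and the meridian disks of $N$ furnish exactly the required disks). The resulting foliation on $M$ contains a Reeb component $R$, hence by Proposition \ref{sep_torus} (its boundary torus is a separating leaf) it is non-taut. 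Equivalently one can describe this as simply turbulizing the product foliation on $N$ into a Reeb-filled solid torus.

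For the second foliation (non-taut, no Reeb component, on $M\bsl V$), I would instead remove the interior of $V=N$ and keep only the turbulization: glue a $\T$ component along $\partial(M\bsl\mathring V)=\partial N$ so that the transverse boundary torus becomes a torus leaf, but do not fill $V$ back in. On $M\bsl V$ this gives a foliation with a torus boundary leaf $T=\partial R$ carrying, say, outward transverse orientation, while all other behavior is inherited from the taut $\F$. The boundary is now a single leaf, and by Proposition \ref{sep_torus} (a manifold whose boundary is a union of leaves with the same transverse orientation is not taut) the foliation on $M\bsl V$ is non-taut; it contains no Reeb component because we only attached a turbulization $\T$, not a Reeb-filled solid torus, and the interior of $M\bsl V$ still carries the Reebless taut foliation $\F$.

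The main obstacle I expect is the first step: producing the flow-box $N$ so that its transverse boundary really carries a genuine circle foliation to which turbulization can be applied, and checking that the meridians bound disks in $N$ (needed for the Reeb-filling in the first construction). This requires care about the isotopy class of the transverse curve $\gamma$ and about the induced $1$-dimensional foliation on $\partial N$; one must ensure $\gamma$ can be chosen so that $N$ is an honest product flow-box rather than something with nontrivial holonomy on its boundary. The exclusion of the trivially foliated $\S^2\t\S^1$ is precisely what guarantees the existence of such a transverse curve meeting a leaf, which is why the hypothesis appears. Once $N$ is in hand, the two constructions follow directly from the definitions of turbulization and Reeb component together with Proposition \ref{sep_torus}.
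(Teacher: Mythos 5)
Your construction is essentially the paper's: take a thin product-foliated neighborhood $V\cong\D^2\t\S^1$ of a closed transverse curve, turbulize along $\partial V$, and either leave the resulting torus boundary leaf (giving the non-taut foliation on $M\bsl V$) or cap off with a Reeb component (giving the non-taut foliation on $M$); non-tautness in both cases follows from Proposition \ref{sep_torus}, exactly as you say.

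There is, however, a genuine gap in your verification that the foliation on $M\bsl V$ is Reebless, and it is tied to a misreading of the hypothesis. You claim that the exclusion of the trivially foliated $\S^2\t\S^1$ is what guarantees the existence of a closed transverse curve; but that foliation is taut and certainly admits closed transverse loops, so this cannot be the role of the hypothesis. The actual danger is that attaching the $\T$ component to $M\bsl\mathring{V}$ could itself complete a Reeb component: this happens precisely when $M\bsl\mathring{V}$ is a solid torus on which the turbulized foliation is the Reeb foliation. The paper rules this out by observing that if $M\bsl\mathring{V}$ were such a Reeb component, then $M$ would be a union of two solid tori glued so that meridian disks match up (since $V$ is foliated by meridian disks and the transverse circle leaves of $\T$ bound disks on both sides), forcing $M\cong\S^2\t\S^1$ with the product foliation --- which is exactly what the hypothesis excludes. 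Your argument that ``we only attached a turbulization $\T$, not a Reeb-filled solid torus'' does not address this possibility, because the Reeb component, if it appears, is assembled from the attached $\T$ component together with the rest of $M\bsl\mathring{V}$, not contained in the attached piece alone. Adding this one verification makes your proof complete and identical to the paper's.
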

  
\begin{proof}
By definition, there exists a closed transerve curve, say $\gamma$. Choose a small enough regular neighborhood of $\gamma$, denoted $V\cong \mathbb{D}^2\times\mathbb{S}^1$, so that the induced foliation by $\mathcal{F}$ on $V$ is the trivial foliation $\mathbb{D}^2\times\mathbb{S}^1$.\\

Now consider $M \backslash \mathring{V}$.\\
By construction the foliation induced on $\partial V$ is $(\partial \mathbb{D}^2)\times\mathbb{S}^1$.\\
Then we can apply the process of turbulization in $(\partial V)\times I$ by pasting a $\mathcal{T}$ component, to obtain a foliation $\mathcal{F}'$ of $M \backslash \mathring{V}$ with one torus boundary leaf.\\
Then, Proposition \ref{sep_torus} implies that $\mathcal{F}'$ is not taut.\\

This process of turbulization gives a Reeb component if and only if $M\cong \mathbb{S}^2\times\mathbb{S}^1$ with the product foliation.\\
Indeed suppose that our construction leads to a Reeb component $R$, i.e $M \backslash \mathring{V}=R$ so contains a $\T$ component. Then $M \backslash (\mathring{V}\cup \T)$ is a solid torus foliated by $\mathbb{D}^2\times\mathbb{S}^1$, homeomorphic to $M \backslash \mathring{V}$.\\
Then $M$ is a union of two solid tori, and since $V$ is foliated by disks, the transverse circles leaves of $\T$ bounds disks in $V$ and in $R$, so the identification of the two solid tori pastes the boundary of the meridians disks, hence $M \cong \mathbb{S}^2\times\mathbb{S}^1$.\\
The converse is trivial.\\
 
In conclusion, we have constructed a non-taut foliation on $M \backslash \mathring{V}$ without Reeb component.
By gluing  a Reeb component trivially to this torus leaf we obtain a non taut foliation with a Reeb component on $M$.\\
Hence we have proved Proposition \ref{non-taut}.

\end{proof}

%%\begin{RK}
%% If $M$ admits a non-taut foliation with tori leaves only on $\partial M$, contained in a $\mathcal{T}$, either in a $\mathcal{T}_{*}$ or in a $\mathcal{S}_1$ component; then $M$ admit a taut foliation.\\
%% Indeed, it suffices to delete the torus leaves; by Theorem \ref{tore_Go}.\\
%%This can be easily done by cutting off the foliated component $\mathcal{T}$, either $\mathcal{T}_{*}$ or $\mathcal{S}_1$, or $\mathcal{S}_*$. We obtain a taut foliation everywhere transverse to the boundary if it is not a $\mathcal{S}_*$ component (there would be Reeb annuli) .
%%
%%\end{RK}

%\begin{DEF}
%Let $S_g$ be a once punctured compact surface of genus $g$, for $g\geq 0$.\\
%Let us call $\widehat{\mathcal{T}}_g$ the foliation $\mathcal{T}$, where we have glued a foliation $S_g \times \mathbb{S}^1$ on the torus boundary component foliated by circles.
%\end{DEF}

%\begin{RK}
%Note that lemma \ref{eq-turb-spir} clearly implies that the same definition of $\widehat{\mathcal{S}_{1}^{g}(Id)}$ can be done with $\mathcal{S}_1(Id)$.
%\end{RK}

%%\begin{RK}
%%$\widehat{\mathcal{T}}_0$ is a Reeb component. See figure \ref{Reeb}.
%%\end{RK}
\subsection{Non-taut and Reebless foliations}
Foliations admitting a Reeb component are not taut, but the converse is false: there are many non-taut and Reebless foliations.\\
There are two kinds of examples :
\begin{enumerate}
\item Non-taut and Reebless foliations on manifolds admitting a taut foliation;
\item Non-taut and Reebless foliations on manifolds without taut foliations.
\end{enumerate}

Many examples for Point (1) are constructed by Proposition \ref{non-taut}.\\
A simple example is the following. Consider the manifold $M=S_g \times \mathbb{S}^1$, where $S_g$ is a closed compact surface of genus $g$, for $g\geq 0$, with the trivial product foliation. Note that it is taut, so we can apply Proposition \ref{non-taut}, and we construct a non-taut and Reebless foliation on $M\bsl \mathring{V}$, where $V$ is a solid torus (see next figure for the case where $g=1$).\\
Note that by gluing two such foliations along their boundary torus leaves we obtain a separating torus leaf in a non-taut Reebless foliation.

  \begin{figure}[htb!]
\includegraphics[width=11cm]{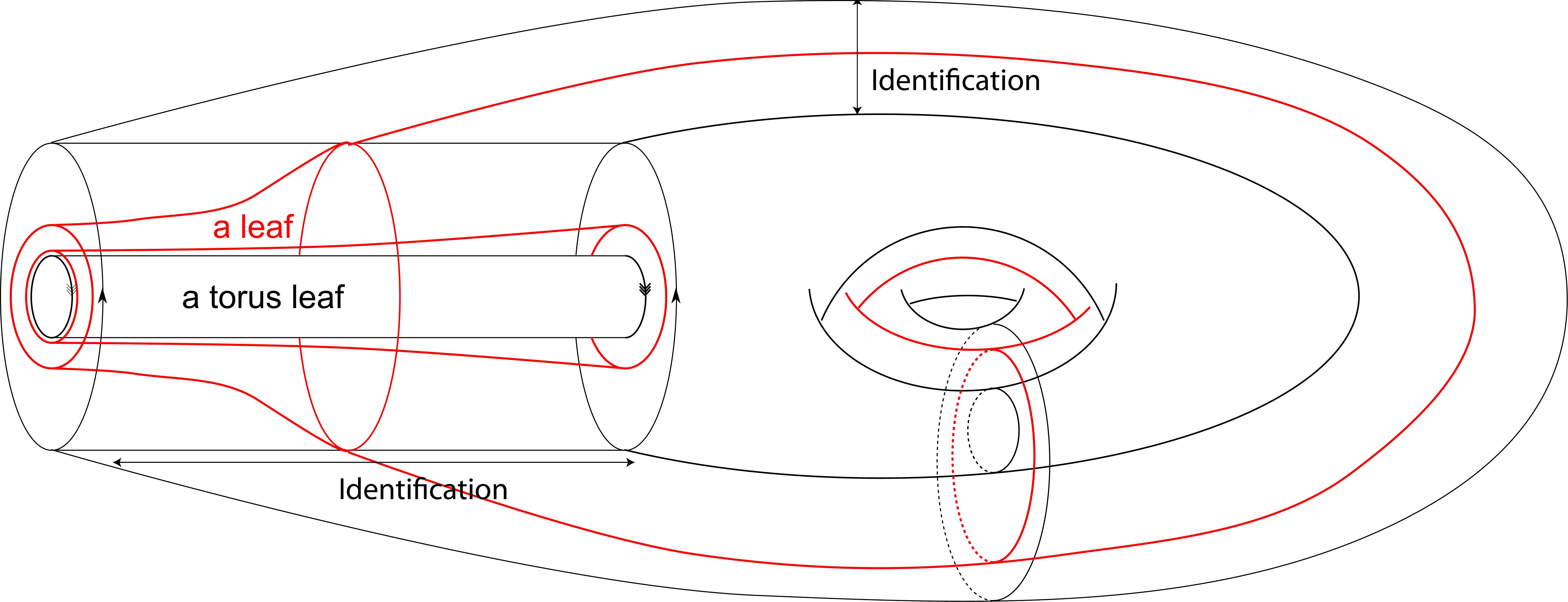}
\caption{Non taut foliation on $M\bsl \mathring{V}$, when $g=1$}\label{fig:16}
\end{figure}

Nevertheless, note that $Q$ admits a non-taut, transversely orientable Reebless foliation, it not obtained via Proposition \ref{non-taut}. This is the one constructed by R. \cite{Ro}, called type $II_b$ component, and given in Figure \ref{fig:11}.\\
\begin{figure}[ht!]
\includegraphics[width=6cm]{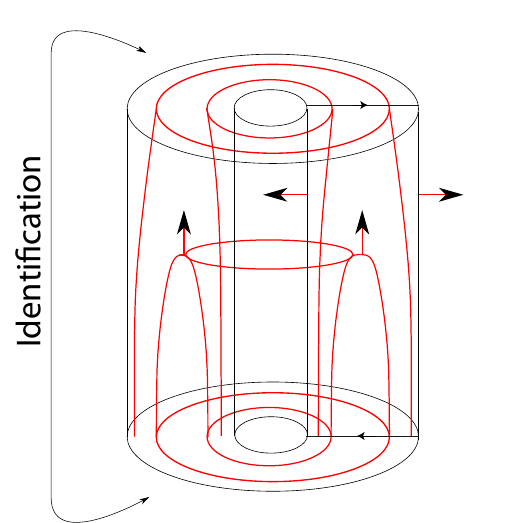}
\caption{Non-taut Reebless and transversely oriented foliation on $Q$}\label{fig:11}
\end{figure}

Proposition \ref{non-taut} shows that there are a lot of non-taut, Reebless foliations, since any taut foliation on a manifold $M$ gives rise to a non-taut Reebless foliation (on $M\bsl\mathring{V}$, where $V$ is a solid torus). \\

A more interesting question is the existence of non-taut Reebless foliations in a manifold not admitting a taut foliation, (Point (2)); i.e among homology spheres by Theorem \ref{Gabai}.\\
\cite{BNR} gave examples of such foliations on graph manifolds.

\begin{THM}[\cite{BNR}]
 There exist infinitely many manifolds without taut foliations admitting Reebless foliation (hence non-taut). Those are graph manifolds constructed by gluing two Seifert fibered manifold, each based on the disc with two exceptional fibers.
\end{THM}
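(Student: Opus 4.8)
Since this statement is quoted from \cite{BNR}, I only indicate the strategy I would follow to reprove it. The plan is to exhibit an explicit infinite family of closed graph manifolds $M=M_{1}\cup_{T}M_{2}$, where each $M_{i}$ is Seifert fibered over the disc with exactly two exceptional fibres and $T=\partial M_{1}=\partial M_{2}$ is the (separating, incompressible) gluing torus, and then to establish the two assertions separately: that $M$ carries a Reebless foliation, and that $M$ carries no taut foliation. The family is produced by letting the Seifert invariants (the two exceptional fibres on each side) and the gluing homeomorphism of $T$ run over a suitable arithmetic range.

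For the Reebless, hence non-taut, foliation I would start from a \emph{horizontal} foliation on each Seifert piece, that is, one transverse to the circle fibres; such a foliation exists because each $M_{i}$ is fibred over the disc with only two exceptional fibres, and therefore meets the Milnor--Wood type inequality that guarantees a transverse foliation. These two horizontal foliations induce circle (or linear) foliations on $T$ from either side, whose boundary slopes will in general disagree. Rather than forcing a match, I would reconcile them near $T$ by the constructions of this paper: after isotoping each boundary foliation to contain a circle leaf, apply turbulization (the $\T$ component) or spiraling (the $\Ss_{*}$ component) on a collar $T\times I$ on each side, so that $T$ becomes a common \emph{separating torus leaf}. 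The resulting foliation is Reebless, since $T$ is incompressible in the graph manifold, so by \cite{No} no Reeb component can be trapped against it, and the horizontal pieces carry none. Finally, because $T$ is a separating torus leaf, Proposition \ref{sep_torus} immediately yields that this foliation is not taut.

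The genuinely hard assertion is the \emph{non-existence} of any taut foliation on $M$. Here I would invoke the structure theory of taut (indeed essential) foliations on Seifert fibered and graph manifolds: up to isotopy such a foliation restricts on each piece $M_{i}$ to a foliation that is either vertical (saturated by fibres) or horizontal (transverse to fibres), and the vertical case can be analysed separately and excluded for these particular pieces. For a horizontal foliation on a Seifert piece over the disc with two exceptional fibres, the realizable \emph{boundary slope} on $T$ is confined to a computable set $S_{i}$ determined by the Seifert invariants, via the Milnor--Wood inequality together with the Jankins--Neumann--Naimi realization results for rotation numbers of groups of circle homeomorphisms. I would then choose the exceptional-fibre data of $M_{1}$ and $M_{2}$, and the gluing of $T$, so that the two slope sets $S_{1}$ and $S_{2}$, read in a common coordinate system on $T$, are disjoint. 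Since a taut foliation must restrict to two horizontal foliations whose boundary slopes agree across $T$, the emptiness of $S_{1}\cap S_{2}$ rules this out, and letting the data vary gives infinitely many such $M$.

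The main obstacle is exactly this last step: proving that a hypothetical taut foliation can be isotoped to be horizontal on each Seifert piece while controlling its behaviour along $T$, and then carrying out the number-theoretic bookkeeping that computes $S_{1}$ and $S_{2}$ and forces them apart for an infinite family of Seifert invariants. The horizontal-or-vertical reduction and the slope realization computation are the technical heart of \cite{BNR}; everything else is the soft model construction above, which fits directly into the turbulization and spiraling framework developed in this paper.
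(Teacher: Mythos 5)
The paper does not prove this statement: it is imported verbatim as a citation of \cite{BNR}, stated in Section \ref{non-taut-sec} purely to record that manifolds with Reebless but no taut foliations exist, so there is no in-paper argument to compare yours against. Judged on its own, your outline is a faithful summary of the actual strategy of \cite{BNR}: a Reebless, non-taut foliation is built by spiraling horizontal foliations of the two Seifert pieces into the separating incompressible gluing torus (which is exactly the $\Ss_{*}$/$\T_{*}$ mechanism of Sections \ref{Sec-Reeb-turb}--\ref{spir}, and non-tautness then follows from Proposition \ref{sep_torus}); non-existence of taut foliations is obtained by the horizontal-or-vertical reduction on each piece together with the computation, via Milnor--Wood and the Jankins--Neumann--Naimi realization of boundary slopes, that the two realizable slope sets can be made disjoint. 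You correctly identify that the entire mathematical content lives in this last step, but your text only names it as ``the main obstacle'' and defers it; as submitted this is a proof plan, not a proof, and the two deferred steps (the isotopy of an arbitrary taut foliation to horizontal position on each piece while controlling it along $T$, and the arithmetic separating $S_{1}$ from $S_{2}$ for infinitely many choices of invariants) are precisely where all the difficulty of \cite{BNR} is concentrated. One small imprecision worth flagging: turbulization as defined in this paper requires a circle foliation on the boundary torus, whereas a horizontal foliation may induce an irrational (dense) boundary foliation, so the reconciliation near $T$ needs the generalized components $\T_{*}$ or $\Ss_{*}$ rather than $\T$ alone, and ``isotoping each boundary foliation to contain a circle leaf'' is not always available without changing the boundary slope.
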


\begin{Q}
There are infinitely many Seifert fibered homology spheres not admitting a taut foliations by \cite{GM}. Do they admit a non-taut  Reebless foliation? 
\end{Q}

\section{Non-separating torus leaf.}\label{non-sep-torus}

We have seen that a foliation with a separating torus leaf cannot be taut. \\
The case of non-separating torus leaves is very different since they can lie in a taut foliation or in a non-taut foliation.\\
The goal of Section \ref{non-sep-torus} is to understand the reason. 
%Since we focus on non-separating torus leaves, in the following we can assume that all the considered foliations are Reebless.\\

In this section, we first give an example of a non-taut $\mathcal{C}^1$-foliation admitting a non-separating torus leaf, and then we give some constructions of taut and non-taut foliations admitting a non-separating torus leaf. \\
We will see that the key point amounts to do a \textit{good} spiraling (opposite direction of rotation in a neighborhood of the torus leaf) to obtain a taut foliation or a \textit{bad} spiraling (same direction of rotation in a neighborhood of the torus leaf) to obtain a non-taut foliation.\\
Then we prove Theorem \ref{main}.\\

\subsection{Example of non-taut foliation on $T^{3}$}\label{T3}
We study the well-known example of $T^3$ (where $T^3\cong \mathbb{S}^1\times \mathbb{S}^1\times \mathbb{S}^1$).\\
Here we give two examples of non-taut foliations with non-separating torus leaves on $T^{3}$. A non-transversely oriented one ($\L_{1}$); and a transversely oriented one ($\L_{2}$).\\

Let us represent $T^3$ as follows~:\\
Set $W=\{(x,\theta, z), x\in [0,1], \theta\in ]-\pi,\pi],z\in [0,1]\} \cong I\times \mathbb{S}^1\times I$.\\

Now $ W/\!\raisebox{-.65ex}{\ensuremath{\sim}} \cong T^{2}\times I$ where $(x,\theta,0)\sim (x,\theta,1)$; and $T^3$ is obtained by identifying the two following torus boundary components to obtain a non- separating torus $T\subset T^{3}$~:\\
 $T_0=\{(0,\theta, z), \theta\in ]-\pi,\pi], z\in [0,1]\}/\!\raisebox{-.65ex}{\ensuremath{\sim}}$ and\\
  $T_1=\{(1,\theta, z), \theta\in ]-\pi,\pi], z\in [0,1]\}/\!\raisebox{-.65ex}{\ensuremath{\sim}}$.\\

\begin{DEF}\label{def-L}
Foliate each $A_z=\{(x,\theta, z), x\in [0,1], \theta\in ]-\pi,\pi]\}$, for $z\in [0,1]$, by a Reeb annulus. That induces a foliation $\mathcal{L}$ on $T\times I\cong W/\!\raisebox{-.65ex}{\ensuremath{\sim}}$ because this foliation is invariant by $\sim$.
\end{DEF}
 In this foliation $\mathcal{L}$, $T_0$ and $T_1$ are leaves, which implies a foliation $\mathcal{L}_1$ on $T^3$.\\

 \begin{figure*}[htb!]
\begin{minipage}[b]{0.48\linewidth}
\centering

\centerline{\includegraphics[width=5cm]{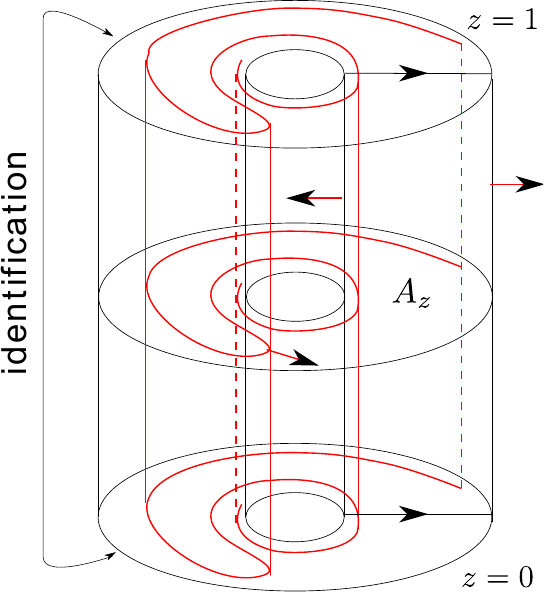}}
\centerline{\footnotesize{(a) Foliation $\mathcal{L}$ of $T\times I$.}}
\end{minipage}
\hfill
\begin{minipage}[b]{0.48\linewidth}
\centering

\centerline{\includegraphics[width=5cm]{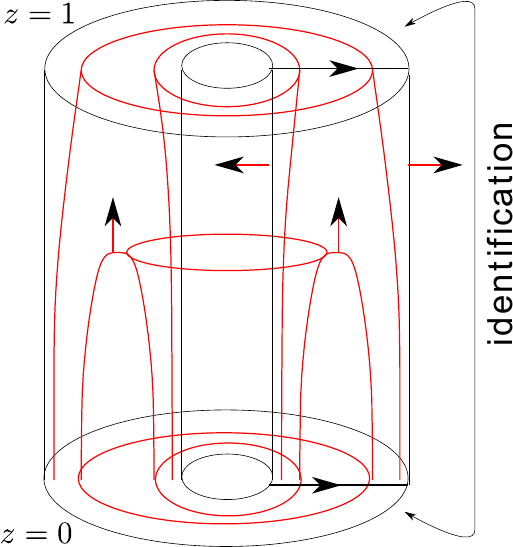}}
\centerline{\footnotesize{(b) Other representation of $\mathcal{L}$ (from \cite{Ro})}}
\end{minipage}
\caption{}
\label{L-T-I}
\end{figure*}

$\mathcal{L}_1$ is not taut, because any transverse loop passing through the torus leaf, would induce (after isotopy and splitting) a transverse arc on $A_z$, for some $z\in [0,1]$, with endpoints in $\partial A_z$, which is impossible since a Reeb annulus is not taut.\\
Moreover, $\mathcal{L}_1$ is not transversely orientable because there is no way of extending continuously a transverse vector field on $T$.\
Indeed,  $\mathcal{L}$ is transversely oriented. But on each $A_z$, the two boundary leaves have the same orientation. Hence on $ W/\!\raisebox{-.65ex}{\ensuremath{\sim}} \cong T^{2}\times I$, the two torus boundary leaves have the same orientation (for example, in Figure \ref{L-T-I}, they both point outward).\\
Thus, when gluing the two boundary leaves by a reversing orientation homeomorphism, the transverse orientations cannot match.\\

Note that $\mathcal{L}_1$ does not contain any non-orientable leaf and is not transversely orientable, which is the counterexample expected in Remark \ref{trans-or}.\\

Now we give the second example of non-taut foliation with non-separating torus leaf, but which is transversely oriented.\\

Consider the construction above, and glue with a reversing orientation homeomorphism two copies of $\mathcal{L}$, denoted by $\L$ and $\L^{*}$, where we add a $*$ to all the notations when we are in $\L^{*}$. The annuli $A_{z}$ and $A_{z}^{*}$ are attached along their boundary, so that the transverse orientation matches. We obtain a transversely oriented foliation $\mathcal{L}_2$ of $T^3$ (see the induced foliation by $\mathcal{L}_2$ on $A_z'=A_{z}\cup A_{z}^{*}$ for some $z\in [0,1]$ on Figure \ref{fig:13}).

\begin{figure}[ht!]
\includegraphics[width=5cm]{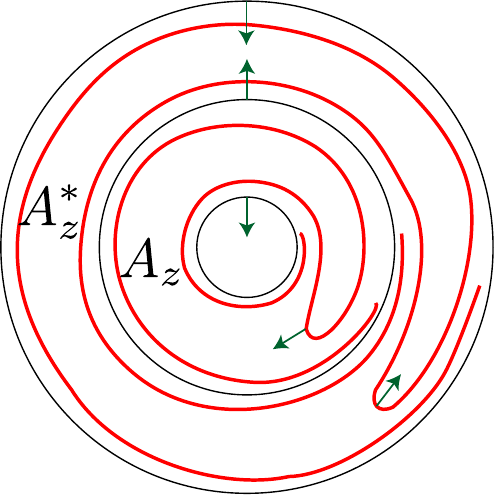}
\caption{Foliation on $A_z'$ induced by $\mathcal{L}_2$}\label{fig:13}
\end{figure}

In conclusion, $\mathcal{L}_2$ admits two non-separating torus leaves, is not taut and Reebless (no leaf is homeomorphic to $\mathbb{R}^2$), and is transversely orientable. Hence this is the expected foliation.\\
Note that by gluing together an even number of such components $\mathcal{L}$, this give an infinite number of such foliations.\\ 
 
Note that obviously $T^3$ admits a taut foliation, which is the product foliation; but we will see another interesting taut foliation constructed with  \textit{good} spiraling in Subsection \ref{taut-spi}, (see Figure \ref{taut-T-I}).

\subsection{Good orientation vs bad orientation}\label{taut-spi}
%When an oriented manifold $M$ admit a non-separating torus $T$, we can consider $M\bsl (T\t \mathring{I})$, which admits (at least) two torus boundary components with opposite orientation. Hence considering a foliation on $M$ with a non-separating torus leaf $T$ amounts to considering $M\bsl (T\t \mathring{I})$ with (at least) two torus boundary leaf with opposite transverse orientation.\\
In this subsection we first give an example of construction where we can obtain a taut or a non-taut foliation depending on if we do a \textit{good} or a \textit{bad} orientation. \\
Theorem \ref{main} is a generalization of this fact.\\
%Finally we conclude by explaining that in Theorem \ref{Gabai}, \citeauthor{Ga} used \textit{good} orientation.\\

 When $M$ is a manifold with two torus boundary components, then we denote $M/ \partial$ the manifold obtained by identifying the two boundary  torus components by the trivial homeomorphism.\\
 Let us study an interesting example~:  $M\cong F_g\times \mathbb{S}^1/\partial $ where $F_g$ is a twice punctured genus $g$ compact orientable surface.\\

When $g=0$, we obtain $T^3$. We have already seen in Subsection \ref{T3} an interesting Reebless, and non-taut foliation on $T^3$; here we will construct a taut one with non-compact leaves.\\

We set $M'= F_g\times \mathbb{S}^1$, with a fixed orientation and denote $\partial M' =T_{-}\cup T_{+}$, (a union of two tori).\\
We denote by $T$ the non-separating torus resulting from the identification of $T_{-}$ and $T_{+}$ in $M=M'/\partial $.\\

Consider on $M'$ the product foliation $\F'$. That induces on $T_{-}$ and $T_{+}$ a circle foliation. We want $T$ to be a torus leaf, so we are going to apply the process of turbulization (or equivalently spiraling by Lemma \ref{eq-turb-spir}) on $T_{-}$ and $T_{+}$. This amounts to glue two $\T$ components, and depending on the gluing, we can construct a taut foliation (Figure \ref{fig:32}), or a non-taut foliation (Figure \ref{fig:33}) by gluing two copies of $\mathcal{T}$ differently on the two torus boundary components.\\
In Figure \ref{fig:31} we have fixed a transverse orientation, and we explicit the two choices of turbulization. The key point is that the transverse orientation on the leaf attaching on the two transverse tori is the same, since any leaf of $\F'$ admits one boundary component on each transverse torus.

 \begin{figure*}[htb!]
\begin{minipage}[b]{0.48\linewidth}
\centering
\centerline{\includegraphics[width=6.5cm]{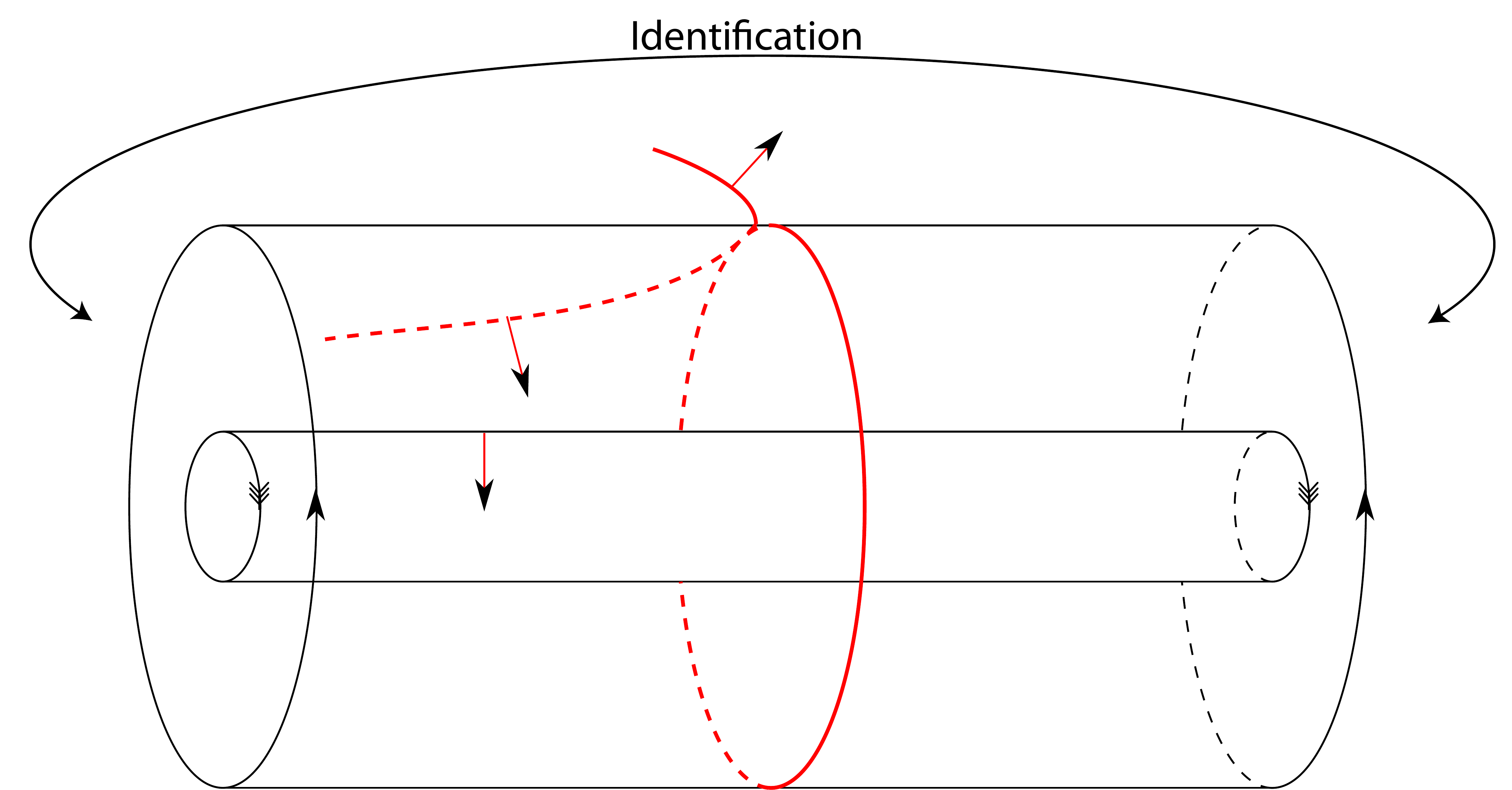}}
\centerline{\footnotesize{(a)$\mathcal {T}^+$}}
\end{minipage}
\hfill
\begin{minipage}[b]{0.48\linewidth}
\centering
\centerline{\includegraphics[width=6.5cm]{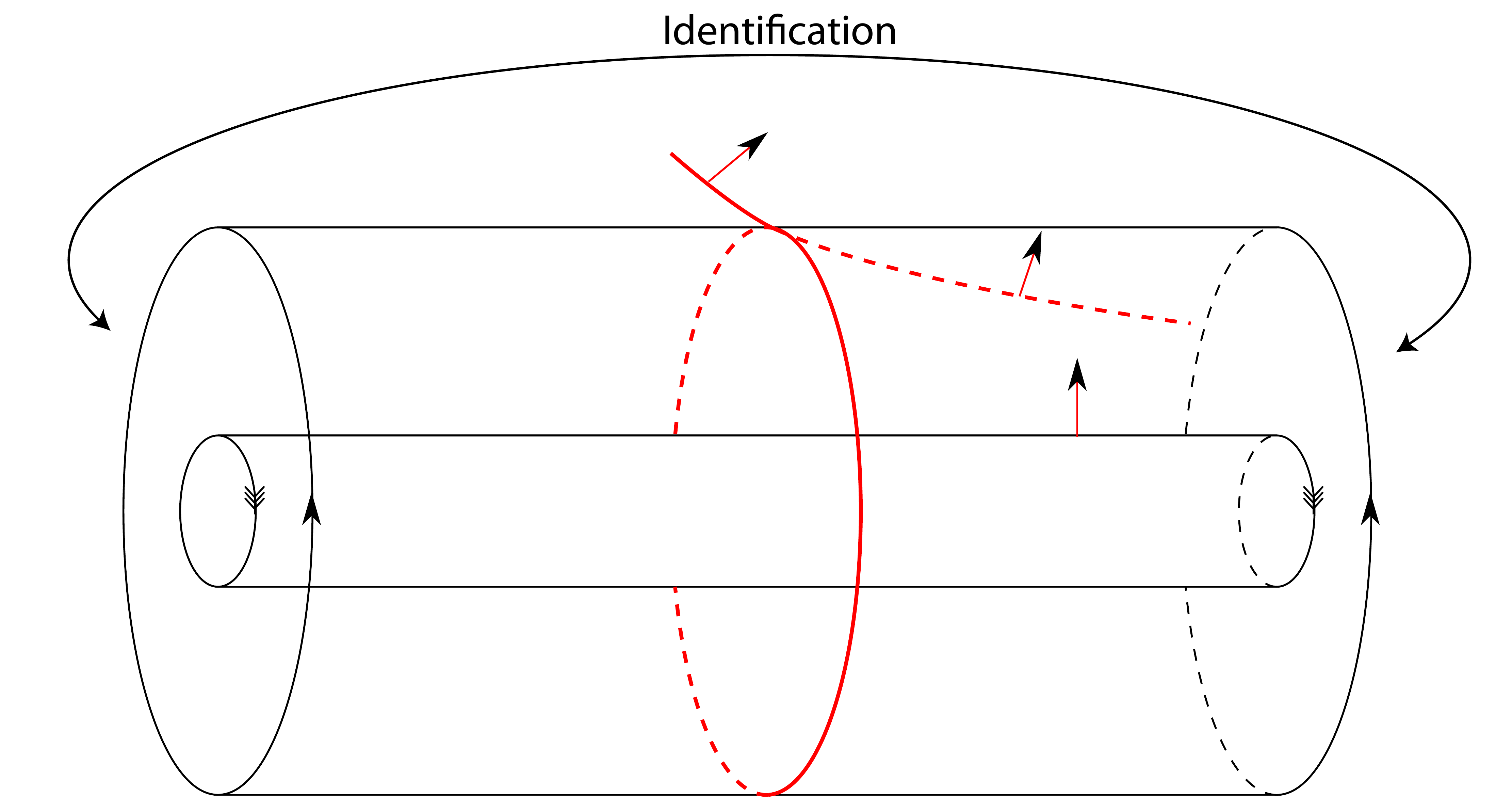}}
\centerline{\footnotesize{(b)$\mathcal {T}^-$}}
\end{minipage}
\caption{}
\label{fig:31}
\end{figure*}

Now we fix a transverse orientation on $\F'$.\\
We want to attach two components $ \mathcal{T}^+$ or $ \mathcal{T}^-$ on the boundary components of $M'$. \\
Let us denote $T_1$ and $T_2$ the two new torus boundary components after the pastings.\\
 There are two choices :
 \begin{enumerate}
\item We glue $ \mathcal{T}^+$ on one boundary component and $\mathcal{T}^-$ on the other. That induces opposite transverse orientation on $T_1$ and $T_2$, (one points inward and the other points outward), and so a taut transversely oriented foliation on $M'$ (choose for example the arc $\gamma$ in Figure \ref{fig:32}).\\
\item We glue $\T$, where $ \T\in\{\mathcal{T}^+, \mathcal{T}^-\}$ on each boundary component, that induces the same transverse orientation on $T_1$ and $T_2$. \\
This foliation is transversely oriented, so by Proposition \ref{sep_torus} it is non-taut.
\end{enumerate}

\begin{figure}[htb!]
\includegraphics[width=6cm]{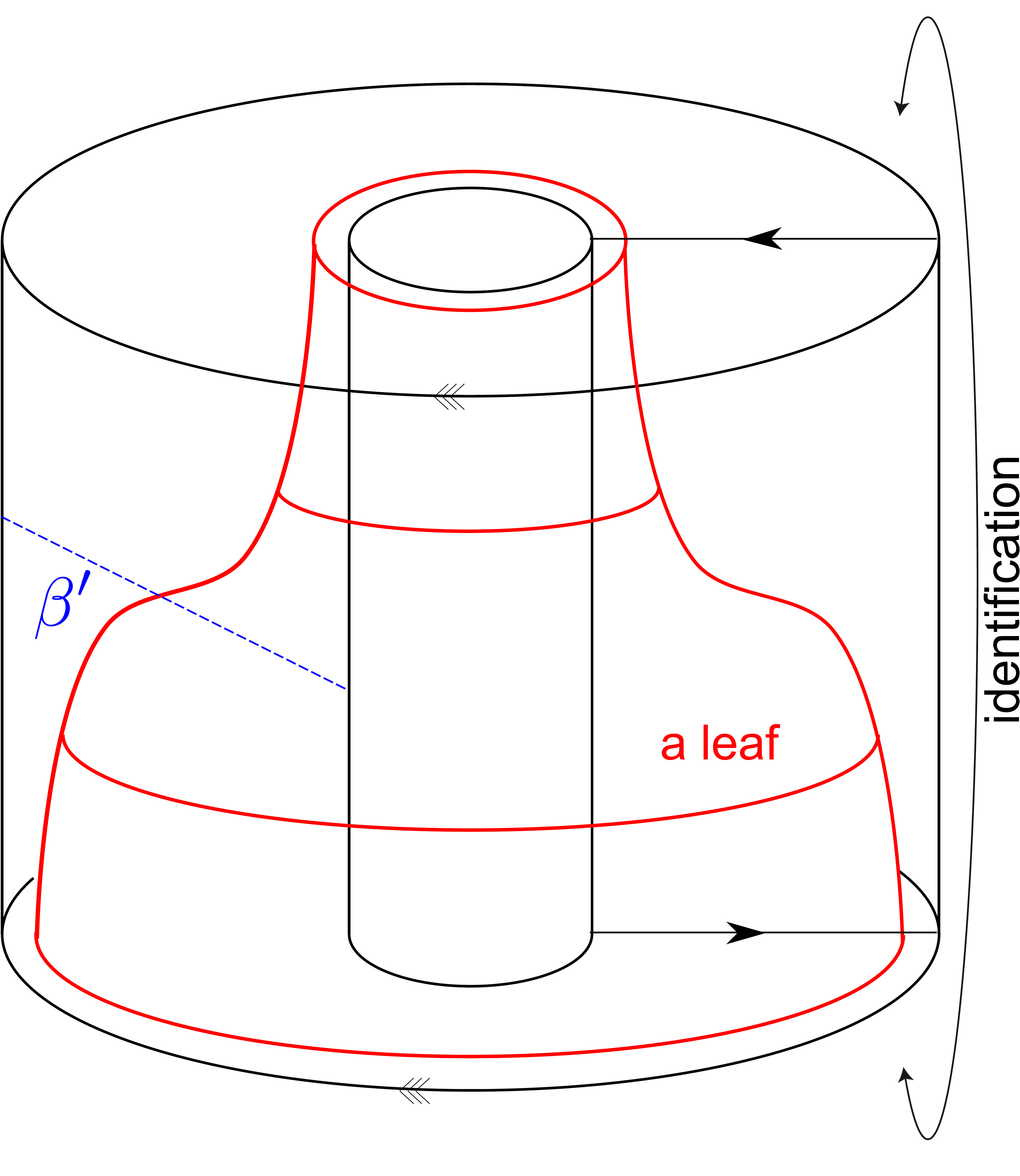}
\caption{Case (1) when $g=0$, i.e on $T\times I  $ }\label{taut-T-I}
\end{figure}

Now we identify $T_1$ and $T_2$.\\
 In the first case that induces a taut foliation admitting a non-separating torus leaf on $M=F_g\times \mathbb{S}^1/\partial $ (with non-compact leaves, but $T$).\\

 In the second case that induces a non-taut, and non-transversely oriented foliation admitting a non-separating torus leaf on $M=F_g\times \mathbb{S}^1/\partial $.\\
Indeed, this foliation is not transversely oriented, because the transverse orientation on $T_1$ and $T_2$ is the same, and the identification $T_1$ and $T_2$ reverse the orientation, so the transverse orientation on $T$ is not well defined.\\
Note that the case $g=0$ is exactly the foliation of $T^{3}$ of Subsection \ref{T3} (see Figure \ref{L-T-I}).\\
Note also that by gluing trivially two copies of such a foliation along the boundary torus leaves, we obtain a non-taut and transversely oriented foliation with two non-separating torus leaves.

 \begin{figure}[htb!]
\includegraphics[width=14cm]{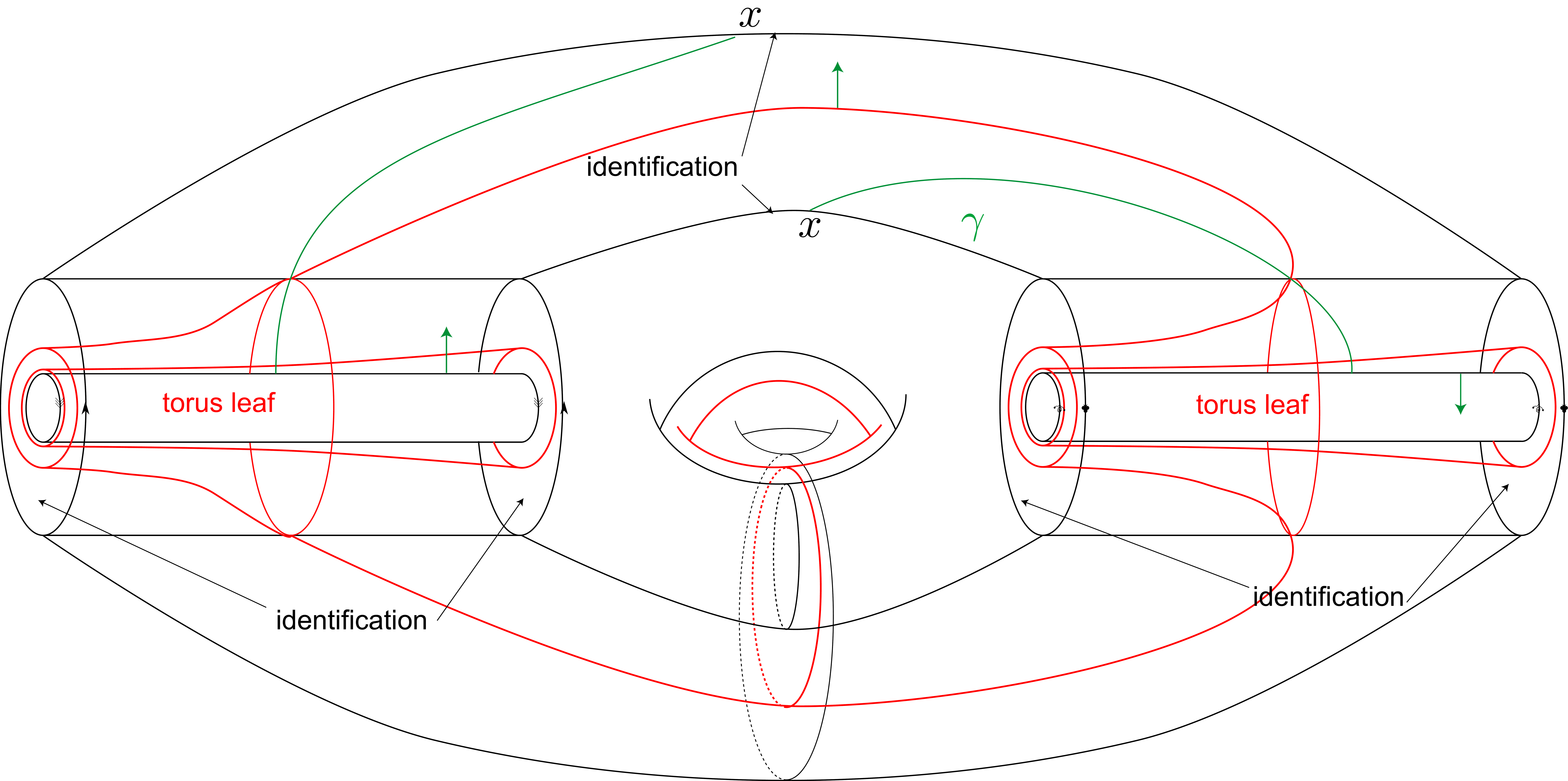}
\caption{Case (1) when $g=1$ : taut foliation}\label{fig:32}
\end{figure}

  \begin{figure}[htb!]
\includegraphics[width=14cm]{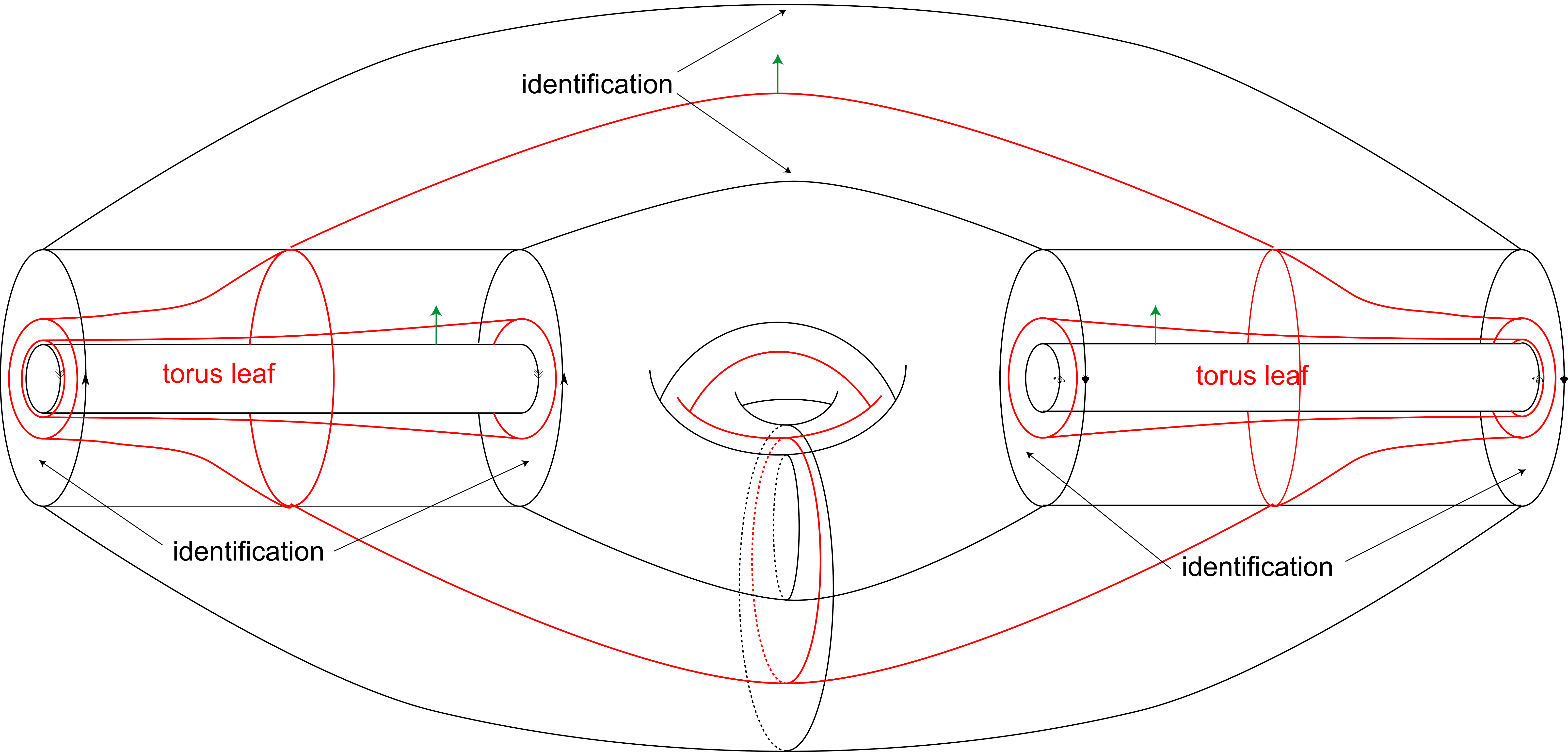}
\caption{Case (2) when $g=1$ : non-taut foliation}\label{fig:33}
\end{figure}

That example leads us to make the following definition :

\begin{DEF}
Let $M$ be a manifold with a transversely oriented foliation $\F$ such that the boundary of $M$ is a union of torus leaves.\\
We say that $\F$ has a \textbf{\emph{bad orientation}} if the transverse orientation on each boundary torus leaf is the same.\\
Otherwise we say that $\F$ has a \textbf{\emph{good orientation}}.
\end{DEF}

\begin{THM} \label{bad-good-2-comp} 
Let $M$ be a manifold with a transversely oriented $\C^{1}$-foliation~$\F$.\\
Assume that the boundary of $M$ is a union of two torus leaves.\\
Assume also that $\F$ does not admit neither interior torus leaf, nor embedded annulus whose induced foliation by $\F$ is a Reeb annulus. \\
Then, $\F$ is taut if and only if $\F$ has a good orientation.\\
\end{THM}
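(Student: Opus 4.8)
The plan is to prove the two implications separately; the implication ``taut $\Rightarrow$ good orientation'' is immediate, while ``good orientation $\Rightarrow$ taut'' carries all the difficulty.

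For ``taut $\Rightarrow$ good orientation'' I would argue by contraposition. If $\F$ has a bad orientation then the two boundary tori $T_1,T_2$ are leaves carrying the \emph{same} transverse orientation, so $\partial M$ is a union of leaves with the same transverse orientation and Proposition \ref{sep_torus} applies verbatim: $\F$ is not taut. Hence a taut $\F$ cannot have a bad orientation, so it has a good one.

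For ``good orientation $\Rightarrow$ taut'', normalize so that $T_1$ points inward and $T_2$ outward. Since $\partial M$ is a union of torus leaves, each boundary component is a single leaf and $\F_{|\partial M}$ contains no Reeb annulus, which settles the boundary clause in the definition of tautness. It then remains to show that every leaf meets a properly embedded transverse arc or a transverse loop. If some leaf $L$ did not, Corollary \ref{gen-goodman}~(1) would force $L$ to be a torus, and since $\F$ has no interior torus leaf, $L$ would be one of $T_1,T_2$. Thus it suffices to exhibit a \emph{single} properly embedded transverse arc running from $T_1$ to $T_2$, since such an arc meets both boundary leaves at once. I would also note that the no--Reeb--annulus hypothesis forces $T_1,T_2$ to be incompressible: a compressible torus leaf would, by \cite{No}, bound a Reeb component, and a Reeb component contains an embedded Reeb annulus (intersect the solid torus of Figure \ref{R3} with a half-plane through its axis; cf. Remark \ref{Reeb-turb}), contradicting the hypothesis.

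To build the arc I would integrate the transverse orientation to a $\C^{1}$ flow $\phi_t$ whose velocity is nowhere tangent to $\F$, and follow the orbit of a point $p\in T_1$ forward. Because the orientation points inward along $T_1$ and outward along $T_2$, this orbit immediately enters $\mathring{M}$, can never exit through $T_1$, and, if it ever reaches $T_2$, exits there; in that case $\phi_{[0,t_0]}(p)$ is exactly the transverse arc we want. The whole theorem therefore reduces to excluding the one remaining possibility: that the forward orbit of every $p\in T_1$ stays trapped in $\mathring{M}$ for all time. This is the main obstacle, and it is where the hypotheses are consumed. A trapped orbit accumulates on a nonempty compact $\phi$-invariant set $\Lambda\subset\mathring{M}$ (it cannot accumulate on $T_2$, where the flow escapes), and the transverse recurrence of the orbit on a limit leaf $L_q$, $q\in\Lambda$, is precisely the situation governed by Goodman's theorem (Theorem \ref{tore_Go}, Corollary \ref{gen-goodman}): either the recurrence can be closed up into a closed transverse curve through $L_q$, or $L_q$ is a torus leaf. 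The latter is impossible because $L_q$ lies in $\mathring{M}$ and is distinct from $T_1,T_2$, contradicting the absence of interior torus leaves; and the former, propagated back along the orbit to $T_1$, produces the sought transverse arc. The delicate point I expect to fight with is exactly this closing-up/torus dichotomy for the trapped recurrent orbit — the step where the $\C^{1}$ hypothesis and Goodman's theorem are indispensable, and where the no--interior--torus and no--Reeb--annulus assumptions are exactly what forbids the trapping configurations.
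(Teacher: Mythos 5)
Your first implication and your reduction are fine: ``taut $\Rightarrow$ good orientation'' is indeed just Proposition \ref{sep_torus}, and it is correct that, given Corollary \ref{gen-goodman}~(1) and the absence of interior torus leaves, the whole theorem reduces to producing one properly embedded transverse arc from $T_1$ to $T_2$. The gap is in how you propose to produce that arc. In the trapped case your dichotomy gives you nothing usable. Goodman's theorem (Theorem \ref{tore_Go}, Corollary \ref{gen-goodman}) is vacuous precisely for the leaves you need to handle: it says a leaf with no closed transversal \emph{is a torus}, and $T_1$, $T_2$ already are tori, so no contradiction arises from applying it to them. Applying it instead to an interior limit leaf $L_q$ of the trapped orbit only yields a closed transverse curve through $L_q$; concatenating the initial orbit segment from $T_1$ with such a closed curve does not produce a \emph{properly embedded} arc --- its second endpoint never reaches $T_2$, which is exactly what the definition of tautness demands for the boundary leaves. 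So ``propagated back along the orbit to $T_1$'' does not close the argument: you would need the full dead-end-component structure theory (that the frontier of the positively saturated set of $T_1$ consists of interior torus leaves, contradicting the hypothesis), which is strictly more than the statement of Theorem \ref{tore_Go} that you invoke. As written, the trapped case is not excluded and no arc is produced from it.

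For comparison, the paper avoids the flow/trapping analysis entirely. It pushes each boundary leaf $T_i'$ inward to a transverse torus $T_i$ (Roussarie--Thurston), observes that the middle piece $N$ has no torus leaves and hence carries a taut foliation $\G$ admitting a properly embedded transverse arc from $T_1$ to $T_2$ (Claim \ref{arc}), checks that the good orientation forces the two ends of this arc to exit $N$ coherently (Claim \ref{possible}), and then uses Proposition \ref{torus-leaf} to identify each collar $N(T_i')$ as an $\Ss_*$ or $\T_*$ component (this is where the no-interior-torus and no-Reeb-annulus hypotheses are consumed), through which the arc visibly extends to the boundary leaves. If you want to salvage your dynamical approach, the missing ingredient is precisely an analogue of this last step: a structure theorem for the region the orbit gets trapped in, not the pointwise statement of Goodman's theorem.
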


\begin{proof}
Proposition \ref{sep_torus} gives exactly that if $\F$ has a bad orientation then $\F$ is non-taut.\\
This is equivalent to say that if $\F$ is taut then $\F$ has a good orientation.\\
It remains to show that if $\F$ has a good orientation then $\F$ is taut. This is the goal of the following.\\
Let us denote by $T'_{1}$ and $T'_{2}$ the two tori boundary leaves. Choose an embedded torus $T_{i}$ in a regular neighborhood of $T'_{i}$ for $i=1,2$, and denote by $N(T_{i}')$ the regular neighborhood of $T_{i}'$ bounded by $T_{i}'$ and $T_{i}$, for $i=1,2$.\\
 By the Theorem of \cite{Ro} and \cite{Th} we can assume that $T_{1}$ and $T_{2}$ are transverse to $\F$.\\
 Fix an orientation on $M$. Up to considering the opposite transverse orientation on $\F$, we can assume that the transverse orientation on $T'_{1}$ points in $M$ and points out of $M$ on $T'_{2}$.\\
 If we denote by $N= M\bsl (\mathring{N(T_{1}')\cup N(T_{2}')})$ the oriented manifold homeomorphic to $M$, bounded by $T_{1}$ and $T_{2}$, and the induced foliation by $\F$ on $N$ by $\G$, then $\G$ does not admit torus leaves, so by Corollary \ref {gen-goodman} it is taut.\\
 
 \begin{CL}\label{arc}
There exists a properly embedded arc $\gamma : I \rightarrow N$ transverse to $\G$ with an endpoint on $T_{1}$ and another on $T_{2}$.
 \end{CL}

\begin{proof}
Since $\G$ is taut, either we find a properly embedded transverse arc, or a closed transverse curve to each leaf.\\
If there exists a properly embedded transverse arc, we note that it must have one endpoint on $T_{1}$ and another on $T_{2}$. Indeed if both endpoints are on the same boundary component then by Proposition \ref{sep_torus-arc}, it cannot be a transverse arc.\\
If we find a closed transverse curve, it can be chosen so that it meets all the leaves, then we can cut this curve in two points to obtain a transverse arc, and isotope it so that the endpoints meet $T_{1}$ and $T_{2}$, and keep being transverse to $\G$. Indeed, it suffices to pick one leaf $F_{1}$ meeting $T_{1}$ and one (other) leaf $F_{2}$ meeting $T_{2}$, and cut the closed loop at the points it meet $F_{i}$, $i=1,2$, and push the endpoints to the boundary of $F_{i}$, $i=1,2$ by small isotopies transverse to the leaves in a neighborhood of $F_{i}$, $i=1,2$. So we have proved Claim \ref{arc}.\\
\end{proof}

Therefore, up to considering $t\rightarrow \gamma (1-t)$, we can assume that $\gamma (0)\in T_{1}$ and $\gamma (1)\in T_{2}$. Moreover we make the confusion between $\gamma$ and $\gamma (I)$. Let $J=[0,\epsilon]$, where $\epsilon >0$ is small enough.\\
It remains to understand why we can extend $\gamma$ in $M$ to obtain a properly embedded arc transverse to $\F$.\\

\begin{CL}\label{possible}
If $\F$ is transversely oriented with a good orientation, the only possibilities for $\G$ to be transverse to the $T_{i}$, $i=1,2$ are the one on Figure \ref{good-or}.
\end{CL}

  \begin{figure}[htb!]
\includegraphics[width=10cm]{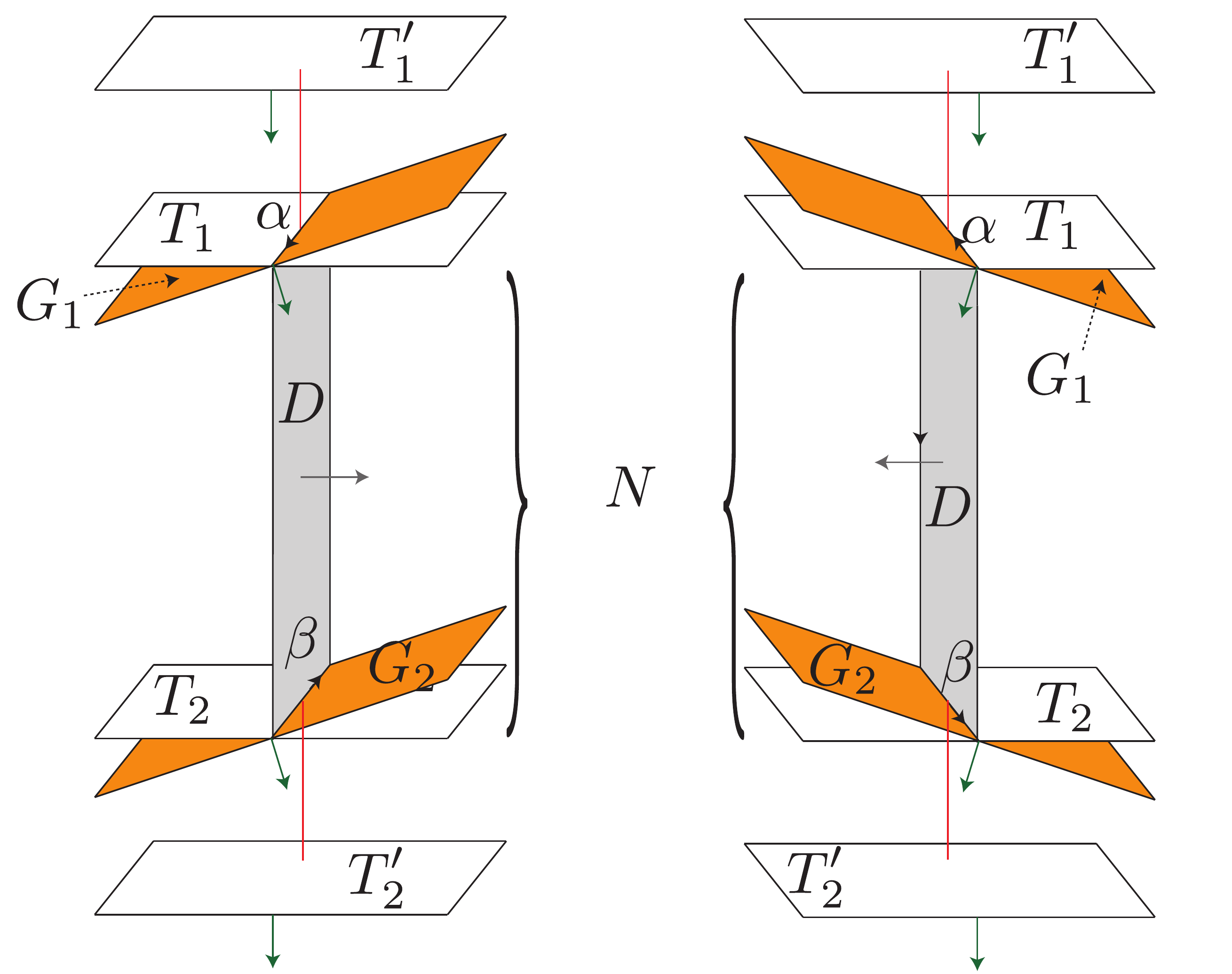}
\caption{Coherent orientation }\label{good-or}
\end{figure}

 \begin{figure}[htb!]
 \includegraphics[width=10cm]{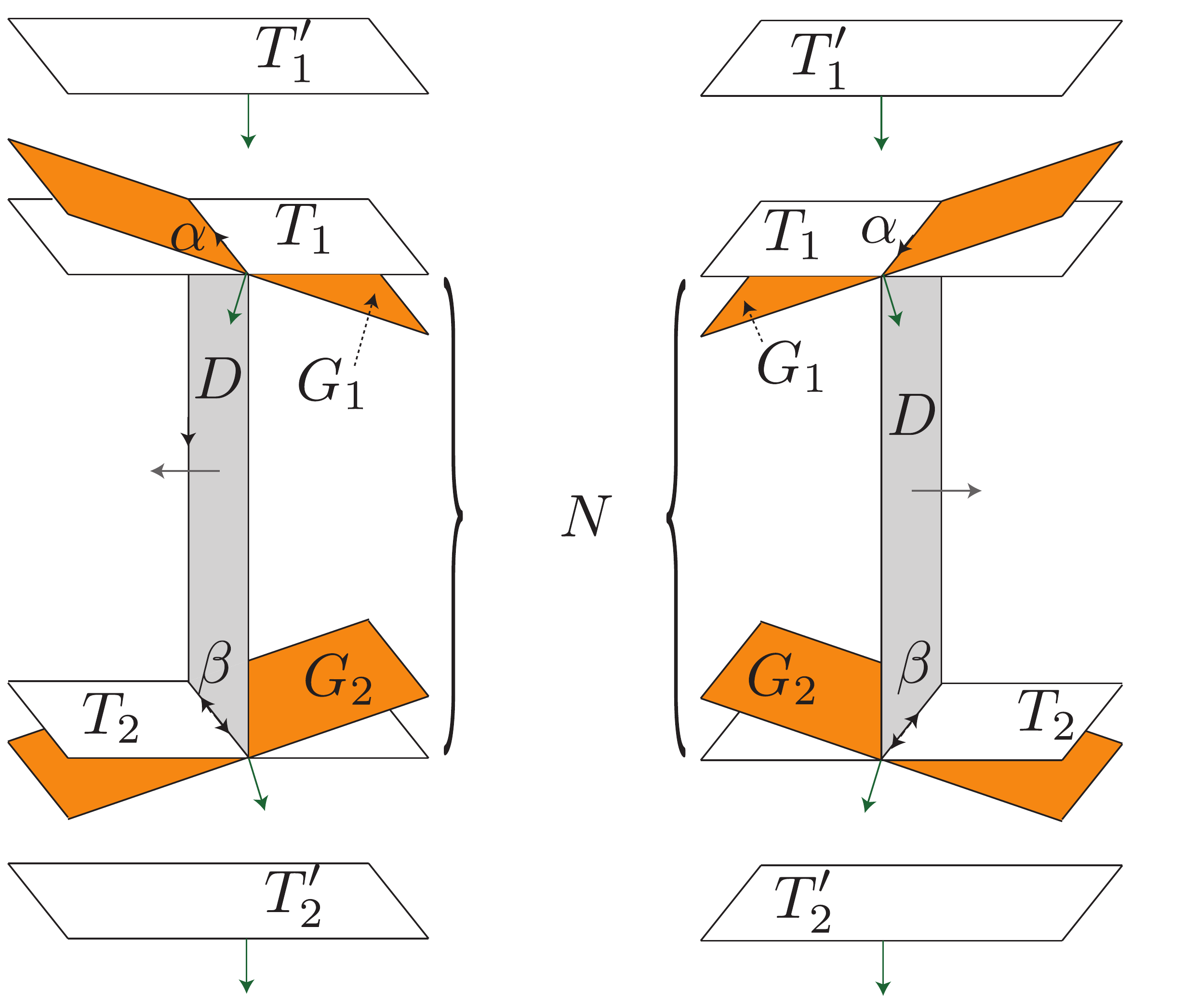}
\caption{Non-coherent orientation }\label{bad-or}
\end{figure}

\begin{proof}
Recall that we have always assumed that $M$ is orientable.\\
We can choose a small disk, denoted $D\cong \gamma\t J$ in $N$ such that for each $t_{0}\in I$, $\gamma (t_{0})\t J$ is included in a leaf of $\G$. It admits an arc $\alpha =\gamma (0) \t J \subset \partial D$, so $\alpha\subset T_{1}\cap G_{1}$, where $G_{1}\in \G$, and an arc $\beta =\gamma (1)\t J  \subset \partial D$, so $\beta\subset T_{2}\cap G_{2}$, where $G_{2}\in\G$.\\
Since $\F$ is transversely oriented, so is $\G$, and the transverse orientation of $\G$ induces an orientation on $D$ which must be coherent, because $N$ is oriented.\\
Indeed, the transverse orientation of $G_{1}$, induces an orientation on $\alpha\subset \partial G_{1}$, denoted $\vec{\alpha}$. This orientation induces also an orientation on $D$, because $\alpha\subset\partial D$.\\
Similarly, the transverse orientation of $G_{2}$, induces an orientation on $\beta\subset \partial G_{2}$, denoted $\vec{\beta}$. But since $D$ and $N$ are oriented, the induced orientation on $\partial D$ imposes $\vec{\beta}=-\vec{\alpha}$.\\

Moreover, there are two ways of being transverse to each $T_{i}$ ($i=1,2$), which gives four possibilities. Figure \ref{good-or} showes the two possibilities with a coherent orientation, between the one induced on $D$ and the one induced by $\partial G_{1}$ and $\partial G_{2}$.\\
Figure \ref{bad-or} shows the other two possibilities where the induced orientation on $D$ by $G_{1}$ is not coherent with the induced orientation on $\beta$ by $G_{2}$ in $N$; which ends the proof of Claim \ref{possible}
\end{proof}

Now we use Proposition \ref{torus-leaf} to understand the foliation of $N(T_{i}')$ in $M$, for $i=1,2$. Indeed, since there is no interior leaves in $\F$, each $T_{i}'$ bounds a $\Ss _{*}$ or a $T_{*}$ component (without embedded Reeb annulus). Hence we can easily find an extension of $\gamma$ in $M$, as in Figure \ref{good-or} which ends the proof of Theorem \ref{bad-good-2-comp}. 
\end{proof}

\noindent Now we prove Theorem \ref{main}.
\begin{proof}
If there are two torus boundary leaves, then this is Theorem \ref{bad-good-2-comp}.\\
Otherwise, it remains to understand that for each torus leaf we can find another torus leaf with opposite orientation (we suppose there is a good orientation). So by Theorem \ref{bad-good-2-comp}, we find a properly embedded arc; which proves Theorem \ref{main}.
\end{proof}

\begin{RK}
Note that given a manifold with a transversely oriented $\C^{1}$-foliation without embedded Reeb annulus, and with interior torus leaves, we can split along all the torus leaves. If we obtain a connected manifold we can apply Theorem \ref{main} to know if it is taut. Of course the connectedness is crucial.
\end{RK}

\begin{RK}
Note that Gabai's spiraling constructs a taut foliation with a non-separating torus leaf, by splitting along it, and Gabai's process imposes a good orientation by considering the component $\Ss (f,h)$ with only a transverse annulus on one boundary component and the remaining of the boundary component of $\Ss (f,h)$ is tangent to the foliation.
\end{RK}

\bibliographystyle{plainnat}
\bibliography{reference}

\begin{thebibliography}{16}
\providecommand{\natexlab}[1]{#1}
\providecommand{\url}[1]{\texttt{#1}}
\expandafter\ifx\csname urlstyle\endcsname\relax
  \providecommand{\doi}[1]{doi: #1}\else
  \providecommand{\doi}{doi: \begingroup \urlstyle{rm}\Url}\fi

\bibitem[Brittenham(1993)]{B1}
Mark Brittenham.
\newblock Foliations and the topology of $3$-manifolds.
\newblock \emph{Courses notes.}, 1993.

\bibitem[Brittenham et~al.(1997)Brittenham, Naimi, and Roberts]{BNR}
Mark Brittenham, Ramin Naimi, and Rachel Roberts.
\newblock Graph manifolds and taut foliations.
\newblock \emph{J. Differential Geom.}, 45\penalty0 (3):\penalty0 446--470,
  1997.

\bibitem[Caillat-Gibert and Matignon(2011)]{GM}
Shanti Caillat-Gibert and Daniel Matignon.
\newblock Existence of taut foliations on seifert fibered homology 3-spheres.
\newblock 2011.
\newblock submitted; Arxiv 1101.3710.

\bibitem[Denjoy(1932)]{De}
Arnaud Denjoy.
\newblock Sur les courbes d\'efinies par les \'equations diff\'erentielles \`a
  la surface du tore.
\newblock \emph{J. Math. Pures Appl.}, 11:\penalty0 333--375, 1932.

\bibitem[Gabai(1983)]{Ga}
David Gabai.
\newblock Foliations and the topology of {$3$}-manifolds.
\newblock \emph{J. Differential Geom.}, 18\penalty0 (3):\penalty0 445--503,
  1983.

\bibitem[Godbillon(1991)]{Godb}
Claude Godbillon.
\newblock \emph{Feuilletages}, volume~98 of \emph{Progress in Mathematics}.
\newblock Birkh\"auser Verlag, Basel, 1991.
\newblock {\'E}tudes g{\'e}om{\'e}triques. [Geometric studies], With a preface
  by G. Reeb.

\bibitem[Goodman(1975)]{Go}
Sue~E. Goodman.
\newblock Closed leaves in foliations of codimension one.
\newblock \emph{Comment. Math. Helv.}, 50\penalty0 (3):\penalty0 383--388,
  1975.

\bibitem[Hector and Hirsch(1986)]{HH}
Gilbert Hector and Ulrich Hirsch.
\newblock \emph{Introduction to the geometry of foliations. {P}art {A}},
  volume~1 of \emph{Aspects of Mathematics}.
\newblock Friedr. Vieweg \& Sohn, 1986.

\bibitem[Lickorish(1965)]{Li}
William Bernard~Raymond Lickorish.
\newblock A foliation for {$3$}-manifolds.
\newblock \emph{Ann. of Math. (2)}, 82:\penalty0 414--420, 1965.

\bibitem[Novikov(1964)]{NZ}
Sergei~Petrovich Novikov.
\newblock Foliations of co-dimension {$1$}.
\newblock \emph{Dokl. Akad. Nauk SSSR}, 157:\penalty0 788--790, 1964.

\bibitem[Novikov(1965)]{No}
Sergei~Petrovich Novikov.
\newblock The topology of foliations.
\newblock \emph{Trudy Moskov. Mat. Ob\v s\v c.}, 14:\penalty0 248--278, 1965.

\bibitem[Reeb(1952)]{Re}
Georges Reeb.
\newblock \emph{Sur certaines propri\'et\'es topologiques des vari\'et\'es
  feuillet\'ees}.
\newblock Actualit\'es Sci. Ind., no. 1183. Hermann \& Cie., Paris, 1952.
\newblock Publ. Inst. Math. Univ. Strasbourg 11, pp. 5--89, 155--156.

\bibitem[Roberts et~al.(2003)Roberts, Shareshian, and Stein]{RSS}
R.~Roberts, J.~Shareshian, and M.~Stein.
\newblock Infinitely many hyperbolic 3-manifolds which contain no {R}eebless
  foliation.
\newblock \emph{J. Amer. Math. Soc.}, 16\penalty0 (3):\penalty0 639--679
  (electronic), 2003.

\bibitem[Roussarie(1974)]{Ro}
Robert Roussarie.
\newblock Plongements dans les vari\'et\'es feuillet\'ees et classification de
  feuilletages sans holonomie.
\newblock \emph{Inst. Hautes \'Etudes Sci. Publ. Math.}, \penalty0
  (43):\penalty0 101--141, 1974.

\bibitem[Siegel(1945)]{Si}
Carl~Ludwig Siegel.
\newblock Note on the differential equations on the torus.
\newblock \emph{Ann. of Math. (2)}, 46:\penalty0 423--428, 1945.

\bibitem[Thurston(1972)]{Th}
William~Paul Thurston.
\newblock Foliations of $3$-manifolds which are circle bundles.
\newblock 1972.
\newblock Thesis, University Cal. at Berkeley.

\end{thebibliography}

\end{document}